\newtheorem{theorem}{Theorem}
\newtheorem{proposition}[theorem]{Proposition}
\newtheorem{lemma}[theorem]{Lemma}
\newtheorem{corollary}[theorem]{Corollary}
\theoremstyle{remark}
\newtheorem*{remark}{Remark}
\theoremstyle{definition}
 \def\q{\mathbb{Q}}
 \def\v{{\rm v}}
 \def\b{\mathcal{B}(A_n)}
\def\t{\mathcal{S}(A_n)}
\def\gr{\mathcal{G}}
  \def\base{\mathcal{B}}
\begin{document}
\title[Root polytopes, triangulations, and the subdivision algebra, I]{Root polytopes, triangulations, and the subdivision algebra, I}
\author{Karola M\'esz\'aros }
\address{
Department of Mathematics, Massachusetts Institute of Technology, Cambridge, MA 02139
}
\date{June 28, 2009}
\keywords{root polytope, triangulation, volume, Ehrhart polynomial,  subdivision algebra, quasi-classical Yang-Baxter algebra, reduced form, noncrossing alternating tree, shelling, noncommutative Gr\"obner basis}
\subjclass[2000]{05E15, 
16S99, 
52B11, 
52B22, 
51M25
}

\begin{abstract}
 
  The  type $A_{n}$ root polytope $\mathcal{P}(A_{n}^+)$ is the convex hull in $\mathbb{R}^{n+1}$ of the origin and the points $e_i-e_j$ for $1\leq i<j \leq n+1$.  Given a tree $T$ on the vertex set $[n+1]$, the associated   root polytope $\mathcal{P}(T)$    is the intersection of  $\mathcal{P}(A_{n}^+)$ with the cone generated by the  vectors  $e_i-e_j$, where $(i, j) \in E(T)$, $i<j$.   The reduced forms of a certain monomial $m[T]$ in commuting variables $x_{ij}$   under the reduction $x_{ij}x_{jk} \rightarrow x_{ik}x_{ij}+x_{jk}x_{ik}+\beta x_{ik}$,  can be interpreted as triangulations of $\mathcal{P}(T)$.  Using these triangulations, the volume and Ehrhart polynomial  of $\mathcal{P}(T)$ are obtained.  If we allow variables $x_{ij}$ and $x_{kl}$ to commute  only  when   $i, j, k, l$  are distinct, then the reduced form of $m[T]$ is unique and yields  a canonical triangulation of  $\mathcal{P}(T)$  in which each simplex corresponds to a noncrossing alternating forest.  Most generally, the reduced forms of  all monomials in the noncommutative case are unique. 
  
  \end{abstract}

\maketitle
  
  \section{Introduction}
\label{sec:in}
 
  In this paper we develop the connection between triangulations of type $A_n$ root polytopes and two closely related algebras: the subdivision algebra $\t$ and the algebra $\b$, which we call the quasi-classical Yang-Baxter algebra following A. N. Kirillov. 
 The close connection of the root polytopes and the algebras $\t$ and $\b$ is displayed by the variety of results this connection yields: both in the realm of polytopes and in the realm of the algebras. Two closely related algebras with tight connections to Schubert calculus have been studied by Fomin and Kirillov in \cite{fk} and by Kirillov in \cite{k1}. Before stating definitions and reasons, we pause at   Exercise 6.C6 of Stanley's Catalan Addendum \cite{cat} to learn the following. 
   
 Consider the monomial $w=x_{12}x_{23}\cdots x_{n,n+1}$ in commuting   variables $x_{ij}$.  Starting with $p_0=w$, produce a sequence of polynomials $p_0, p_1, \ldots, p_m$ as follows.  To obtain $p_{r+1}$ from $p_r$,  choose a term of   $p_r$ which  is divisible by $x_{ij}x_{jk}$, for some $i,j,k$, and replace the factor  $x_{ij}x_{jk}$ in this term   with   $x_{ik}(x_{ij}+x_{jk})$. Note that   $p_{r+1}$   has one more term than $p_r$. Continue  this process until   a  polynomial  $p_m$  is obtained, in which  no term is divisible by  $x_{ij}x_{jk}$, for any $i,j,k$.  Such a polynomial $p_m$  is a {\bf reduced form} of $w$. Exercise 6.C6 in \cite{cat} states that, remarkably, while the reduced form is not unique, it turns out  that the number of terms in a reduced form is  always the {\bf Catalan number}  $C_n=\frac{1}{n+1} {2n \choose n}$.

The angle from which we look at this problem gives a perspective reaching far beyond its  setting in the world of polynomials. On one hand, the reductions can be interpreted in terms of root polytopes and their subdivisions,  yielding a geometric, and subsequently a combinatorial,  interpretation of reduced forms. On the other hand, using the combinatorial results obtained about the reduced forms, we obtain a method for  calculating the volumes and Ehrhart polynomials of a family of  root polytopes. 
 
Root polytopes were defined  by  Postnikov in \cite{p1}. The  full root polytope  $\mathcal{P}(A_{n}^+)$, which is   the convex hull in $\mathbb{R}^{n+1}$ of the origin and points $e_i-e_j$ for $1\leq i<j \leq n+1$, already made an appearance in the work of Gelfand, Graev and Postnikov \cite{GGP}, who gave a canonical triangulation of it  in terms of noncrossing alternating trees on $[n+1]$.  We obtain canonical triangulations for all acyclic root polytopes, of which $\mathcal{P}(A_{n}^+)$ is a special case. 

We define  \textbf{acyclic root polytopes}  $\mathcal{P}(T)$   for a tree $T$ on the vertex set $[n+1]$  as the intersection of  $\mathcal{P}(A_{n}^+)$ with a cone generated by the  vectors  $e_i-e_j$, where  $(i, j) \in E(T)$, $i<j$.      Let  
$$\overline G=([n+1], \{(i, j) \mid  \mbox{there exist edges $(i, i_1) \ldots, (i_k, j)$ in $G$ such that}$$ 

$i<i_1<\ldots<i_k<j\}),$ 

\noindent denote the \textbf{transitive closure} of the graph $G$. Recall that a graph $G$ on the vertex set $[n+1]$ is said to be {\bf noncrossing} if there are no vertices $i <j<k<l$ such that $(i, k)$ and $(j, l)$ are edges in $G$. 
A graph $G$ on the vertex set $[n+1]$ is said to be {\bf alternating} if there are no vertices $ i <j<k $ such that $(i, j)$ and $(j, k)$ are edges in $G$.  Alternating trees were introduced in \cite{GGP}. Gelfand, Graev and Postnikov \cite{GGP} showed that the number of noncrossing alternating trees on $[n+1]$ is counted by the Catalan number $C_n$.

\begin{theorem} \label{vol}
If $T$ is a noncrossing tree on the vertex set $[n+1]$ and   $T_1,\ldots, T_k$ are the noncrossing alternating spanning trees of  $\overline T$, then the root polytopes $\mathcal{P}(T_1), \ldots, \mathcal{P}(T_k)$ are   $n$-dimensional simplices with disjoint interiors whose union is  $\mathcal{P}(T)$. Furthermore, 

 $$\mbox{\em  vol}\, \mathcal{P}(T)=f_{T} \frac{1}{n!},$$ where  $f_{T} $ denotes the number of noncrossing alternating spanning trees of  $\overline T$.
\end{theorem}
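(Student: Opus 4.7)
The plan is to interpret each monomial in the reduction process as a simplex in $\mathbb{R}^{n+1}$, and each reduction step as a local subdivision of $\mathcal{P}(T)$. To a monomial $\prod_{s} x_{i_s j_s}$ (with $i_s < j_s$) whose underlying multigraph is a spanning tree $T'$ of $[n+1]$, associate the $n$-simplex $\Delta_{T'} = \mathrm{conv}(\{0\} \cup \{e_{i_s} - e_{j_s}\})$, which has volume $1/n!$ since the tree edges form a $\pm 1$ determinant basis. The local move $x_{ij} x_{jk} \to x_{ik} x_{ij} + x_{jk} x_{ik}$, read geometrically, records the subdivision of the parallelogram $\mathrm{conv}\{0, e_i - e_j, e_i - e_k, e_j - e_k\}$ along its diagonal $[0, e_i - e_k]$ into the two triangles $\{0, e_i - e_j, e_i - e_k\}$ and $\{0, e_j - e_k, e_i - e_k\}$.

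Starting from $m[T] = \prod_{(i,j)\in T} x_{ij}$, I would apply reductions while maintaining the invariants: (a) every term encodes a spanning tree of $\overline T$; (b) every such tree is noncrossing; (c) the simplices corresponding to the terms form a polyhedral subdivision of a sub-polytope of $\mathcal{P}(T)$, whose total volume equals (number of terms)$/n!$. Invariant (a) follows from the definition of transitive closure, since a reduction introduces an edge $(i,k)$ only when $(i,j), (j,k) \in \overline T$ with $i < j < k$, forcing $(i,k) \in \overline T$; that the underlying graph remains a spanning tree is elementary (remove either $(i,j)$ or $(j,k)$ and reconnect via $(i,k)$). Invariant (b) is a direct check: if the current tree is noncrossing, the two trees produced by a reduction step also are. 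The algorithm terminates precisely when no term contains a subword $x_{ij}x_{jk}$, that is, when every underlying tree is \emph{alternating}.

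Invariant (c) is the heart of the argument. I plan to establish it inductively in the reduction: at each step, a single simplex $\Delta$ is replaced by two simplices $\Delta', \Delta''$ whose union equals $\Delta$ together with the adjoined triangle $\mathrm{conv}\{e_i-e_j, e_j-e_k, e_i-e_k\}$ lying in the plane of the parallelogram. I then need to verify that this adjoined triangle meets the interiors of the previously constructed simplices only on a measure-zero set, so that no overlap is introduced and the disjoint-interior property is preserved. I expect this non-overlap claim to follow from the observation that the cones over alternating subtrees of $\overline T$ inherit a noncrossing structure from $T$, so the simplices glue along shared facets of the form $\mathrm{conv}(\{0, e_i - e_k\} \cup \{\text{remaining tree edges}\})$. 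This is the main technical hurdle and is where the noncrossing hypothesis on $T$ is used in an essential way.

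Once these invariants are secured, termination produces a collection of noncrossing alternating spanning trees $T_1, \ldots, T_k$ of $\overline T$ with $k = f_T$. For each alternating $T_i$ we have $\overline{T_i} = T_i$ (an alternating tree admits no increasing path of length $\geq 2$, so the transitive closure adds no edges), hence $\mathcal{P}(T_i) = \Delta_{T_i}$ is a genuine $n$-simplex of volume $1/n!$. By (c) these simplices have pairwise disjoint interiors and union equal to $\mathcal{P}(T)$, and summing gives $\mathrm{vol}\,\mathcal{P}(T) = f_T / n!$, as claimed.
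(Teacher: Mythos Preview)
Your overall strategy matches the paper's: interpret reductions as subdivisions and read off the triangulation from the leaves. But there is a genuine gap, and it is exactly the step you flag as ``a direct check.''

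\textbf{Invariant (b) is false.} It is not true that a reduction applied to a noncrossing tree always produces noncrossing trees. Take $T$ on $[4]$ with edges $(1,2),(2,3),(2,4)$. Reducing on $(1,2),(2,3)$ introduces the edge $(1,3)$; one of the two output trees is $\{(1,2),(1,3),(2,4)\}$, in which $(1,3)$ and $(2,4)$ cross. So without further control on \emph{which} reductions you perform, you will leave the noncrossing world, and then you have no guarantee that the leaves are noncrossing alternating spanning trees of~$\overline{T}$, nor that they are pairwise distinct. This is precisely why the paper passes to the noncommutative algebra $\b$: edge labels are introduced, ``good'' forests are defined by the ordering constraints $(i)$--$(iv)$, and Lemma~\ref{huh} shows that goodness (hence noncrossingness) is preserved by every reduction that the labels permit. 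Lemma~\ref{cross} then guarantees that some admissible reduction exists whenever the forest is not yet alternating, so the process terminates at noncrossing alternating forests. You need an analogue of this machinery; the bare commutative reduction does not supply it.

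\textbf{Two further points.} First, even once you restrict to noncrossing reductions, you assert $k=f_T$ without argument: you must show that \emph{every} noncrossing alternating spanning tree of $\overline{T}$ actually appears among the leaves, not merely that the leaves are such trees. The paper does this via an explicit inductive construction (Proposition~\ref{gen:non}) together with the counting argument of Theorem~\ref{thm_forest}. Second, your geometric bookkeeping with ``adjoined triangles'' is more delicate than necessary. The paper works with the full polytopes $\mathcal{P}(G)=\mathcal{P}(A_n^+)\cap\mathcal{C}(G)$ rather than the bare simplices $\Delta_G$; the Reduction Lemma then gives $\mathcal{P}(G_0)=\mathcal{P}(G_1)\cup\mathcal{P}(G_2)$ exactly, with intersection a common facet, so nothing is ``adjoined'' and the disjoint-interior property is immediate by induction. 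If you insist on tracking simplices, the region you adjoin at each step is an $n$-simplex, not a triangle, and verifying non-overlap with previously built pieces is essentially equivalent to proving the Reduction Lemma.
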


 Theorem \ref{vol}  can be generalized in a few directions. We calculate the Ehrhart polynomial of $\mathcal{P}(T)$; see Sections \ref{sec:forest} and \ref{sec:gen}.  We   describe the intersections of the top dimensional simplices   $\mathcal{P}(T_1), \ldots, \mathcal{P}(T_k)$  in Theorem \ref{vol} in terms of   noncrossing alternating spanning forests of $\overline T$  in Section \ref{sec:gen}. Theorem \ref{vol} and its generalizations can also be proved for any forest $F$, not necessarily noncrossing, as explained in Section \ref{sec:shelling}.  In Section \ref{sec:shelling} we also prove that the triangulation in Theorem \ref{vol} is shellable, and provide a second method for calculating the Ehrhart polynomial of  $\mathcal{P}(T)$.
 
 The proof of Theorem \ref{vol} relies on   relating the triangulations of a root polytope $\mathcal{P}(T)$ to reduced forms of a monomial $m[T]$ in variables $x_{ij}$, which we now define.   Let $\t$ and $\b$ be two  associative algebras   over the polynomial ring  $\mathbb{Q}[\beta]$, where $\beta$ is a variable (and a central element), generated by the set of elements $\{x_{ij} \mid 1 \leq i <j\leq n+1\}$ modulo the relation   $x_{ij}x_{jk}=x_{ik}x_{ij}+x_{jk}x_{ik}+\beta x_{ik}$. The {\bf subdivision algebra} $\t$ is commutative, i.e., it has additional relations $x_{ij}x_{kl}= x_{kl}x_{ij}$ for all $i ,j, k, l$, while $\b$, which we call  the {\bf quasi-classical Yang-Baxter algebra} following Kirillov \cite{kir},  is noncommutative and has additional relations $x_{ij}x_{kl}= x_{kl}x_{ij}$ for $i, j, k, l$ distinct only. The motivation for calling $\t$ the subdivision algebra is simple; the relations of $\t$ yield certain subdivisions of root polytopes, which we explicitly demonstrate by the Reduction Lemma (Lemma \ref{reduction_lemma}).

We treat the first relation as a  \textbf{reduction rule}:

\begin{equation} \label{red}
x_{ij}x_{jk}\rightarrow x_{ik}x_{ij}+x_{jk}x_{ik}+\beta x_{ik}.
 \end{equation}
 
 A  \textbf{reduced form} of the monomial $m$ in the algebra $\t$ (algebra $\b$) is a polynomial $P_n^\mathcal{S}$ (polynomial $P_n^\mathcal{B}$) obtained by successive applications of reduction (\ref{red}) until no further reduction is possible, where we allow commuting any two variables (commuting any two variables $x_{ij}$ and $x_{kl}$ where $i, j, k, l$ are distinct)   between reductions. Note that the reduced forms are not necessarily unique.
  
A possible sequence of reductions in algebra $\t$ yielding a reduced form of $x_{12}x_{23}x_{34}$ is given by

\begin{eqnarray} \label{ex1}
 x_{12} \mbox {\boldmath$  x_{23}x_{34}$} & \rightarrow & \mbox{\boldmath$x_{12}$}x_{24}\mbox{\boldmath$x_{23}$}+\mbox{\boldmath$x_{12}$}x_{34}\mbox {\boldmath$x_{24}$}+\beta \mbox {\boldmath$x_{12}x_{24}$} \nonumber \\
& \rightarrow& \mbox {\boldmath$ x_{24}$} x_{13}\mbox {\boldmath$x_{12}$}+x_{24}x_{23}x_{13}+  \beta x_{24}x_{13}+x_{34}x_{14}x_{12}+x_{34}x_{24}x_{14} \nonumber \\
& &+\beta x_{34}x_{14}+\beta x_{14}x_{12}+\beta x_{24}x_{14}+\beta^2 x_{14} \nonumber \\
& \rightarrow  &x_{13}x_{14}x_{12}+x_{13}x_{24}x_{14}+\beta x_{13}x_{14}+x_{24}x_{23}x_{13}+\beta x_{24}x_{13}\nonumber \\
& & +x_{34}x_{14}x_{12}+x_{34}x_{24}x_{14}+\beta x_{34}x_{14}+\beta x_{14}x_{12}+\beta x_{24}x_{14}\nonumber \\
& &+\beta^2 x_{14}
\end{eqnarray}

\noindent where the pair of variables on which the reductions are performed is in boldface. The reductions are performed on each monomial separately.

Some of the reductions performed above are not allowed in the noncommutative algebra $\b$. The following is an  example of how to reduce $x_{12}x_{23}x_{34}$ in the noncommutative case.

  \begin{eqnarray}  \label{ex2}
 x_{12} \mbox {\boldmath$  x_{23}x_{34}$} & \rightarrow&  \mbox{\boldmath$x_{12}x_{24}$} x_{23}  +\underline{x_{12}x_{34}} x_{24} +\beta \mbox {\boldmath$x_{12}x_{24}$} \nonumber \\
&\rightarrow& x_{14}\mbox{\boldmath $x_{12}x_{23}$}+x_{24}x_{14}x_{23}+\beta x_{14}x_{23}+ x_{34}\mbox{\boldmath $x_{12}x_{24}$}+\beta x_{14}x_{12}\nonumber \\ &&+\beta x_{24}x_{14}+\beta^2 x_{14}
\nonumber 
\\
&\rightarrow & x_{14}x_{13}x_{12}+x_{14}x_{23}x_{13}+\beta x_{14}x_{13}+x_{24}x_{14}x_{23}+\beta x_{14}x_{23}\nonumber \\ &&+x_{34}x_{14}x_{12}+x_{34}x_{24}x_{14}+\beta x_{34}x_{14}+\beta x_{14}x_{12}+\beta x_{24}x_{14}\nonumber \\ &&+\beta^2 x_{14}
\end{eqnarray}

In the example above the pair of variables on which the reductions are performed is in boldface, and the variables which we commute are underlined.

The ``reason" for allowing $x_{ij}$ and $x_{kl}$ to commute only  when $i, j, k, l$ are distinct   might not be apparent at first, but as we prove in Section \ref{sec:gen},  it insures that, unlike in the  commutative case, there are unique reduced forms    for a natural set of monomials. Kirillov \cite{kir} observed  that the monomial $w=x_{12}x_{23}\cdots x_{n,n+1}$ has a unique reduced form in the  quasi-classical Yang-Baxter algebra  $\b$, and asked for a bijective proof. The uniqueness of the reduced form of $w$ is a special case of our   results, and the desired bijection follows  from our proof methods. 

Before we can state a simplified version of our main result on reduced forms, we need one more piece of notation.    Given a graph $G$ on the vertex set $[n+1]$ we associate to it the monomial $m^\mathcal{S}[G]=\prod_{(i, j) \in E(G)}x_{ij}$; if $G$ is edge-labeled with labels $1, \ldots, k$, we can also associate to it the noncommutative monomial $m^\mathcal{B}[G]=\prod_{a=1}^k x_{i_a, j_a}$, where $E(G)=\{ (i_a, j_a)_a \mid a \in [k]\}$ and $(i, j)_a$  denotes an   edge  $(i , j )$ labeled $a$. 
 
\begin{theorem} \label{thm1}
Let $T$ be a noncrossing tree on the vertex set $[n+1]$,  and  $P^\mathcal{S}_n$ a reduced form of $m^\mathcal{S}[T]$. Then,  $$P_n^\mathcal{S}(x_{ij}=1,  \beta=0)=  f_{T},$$ where $f_{T} $ denotes the number of noncrossing alternating spanning trees of $\overline T$.

If we label the edges of  $T$ so that it becomes a good tree   (to be defined in Section \ref{reductionsB}), then  the reduced form   $P^\mathcal{B}_n$  of the monomial  $m^\mathcal{B}[T]$ is

$$P^\mathcal{B}_n(x_{ij},  \beta=0)=\sum_{T_0}  x^{T_0},$$

\noindent where the sum runs over all   noncrossing alternating spanning trees $T_0$ of $\overline T$ with lexicographic  edge-labels (to be defined in Section \ref{reductionsB1}) and $x^{T_0}$ is defined to be the noncommutative monomial $\prod_{l=1}^n x_{i_l,j_l}$ if $T_0$ contains the edges $(i_1, j_1)_1,$ $ \ldots, (i_n, j_n)_n$.  

\end{theorem}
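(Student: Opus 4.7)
The plan is to exploit the geometric content of the reduction rule (\ref{red}): each application corresponds to a subdivision of a root polytope, so running the reductions to a normal form produces a triangulation of $\mathcal{P}(T)$ into simplices whose top-dimensional pieces (the $\beta=0$ terms) match noncrossing alternating spanning trees of $\overline{T}$, exactly as in Theorem \ref{vol}. The specialization $x_{ij}=1$ then counts the simplices.

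For the commutative part, I would first interpret each squarefree monomial $\prod_{(i,j)\in E(F)} x_{ij}$ appearing during a reduction as the graph $F$, and identify it with the polytope $\mathcal{P}(F)$. The Reduction Lemma (\ref{reduction_lemma}) says that when $F$ contains the edges $(i,j),(j,k)$, writing $F'=F\setminus\{(i,j),(j,k)\}$ and passing to $F'\cup\{(i,k),(i,j)\}$, $F'\cup\{(j,k),(i,k)\}$, $F'\cup\{(i,k)\}$ corresponds to a subdivision of $\mathcal{P}(F)$ along the hyperplane spanned by $e_i-e_k$ (the first two pieces are top-dimensional, the third is their intersection and carries the $\beta$). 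Induct on the number of reductions: starting from $m^{\mathcal{S}}[T]$ and $\mathcal{P}(T)$, any sequence of reductions produces a polynomial $\sum \beta^{a} x^{F}$ whose supporting polytopes $\mathcal{P}(F)$ form a dissection of $\mathcal{P}(T)$ (top-dimensional simplices from the $\beta^0$ monomials, lower-dimensional intersection pieces from higher $\beta$-powers). A term is \emph{reduced} exactly when its underlying graph is alternating, and in that case it is a noncrossing spanning tree of $\overline{T}$ (since every reduction stays inside $\overline{T}$ and preserves noncrossingness starting from the noncrossing $T$). So setting $\beta=0$ and $x_{ij}=1$ counts the top-dimensional simplices, which by Theorem \ref{vol} number $f_T$, independent of the particular reduction sequence.

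For the noncommutative part, I would track edge labels through the reductions: the rule (\ref{red}) replaces the factor $x_{ij}x_{jk}$ carrying labels $(a,a+1)$ by $x_{ik}x_{ij}$, $x_{jk}x_{ik}$, or $\beta x_{ik}$ with those same two label positions, so the output is again an edge-labeled graph. The ``good tree'' hypothesis on $T$ (to be set up in Section \ref{reductionsB}) is exactly what guarantees that at every stage one can perform a reduction whose two variables are syntactically adjacent, so that the allowed commutations of disjoint-index pairs suffice to drive the process to completion. The same subdivision argument from part one yields the monomial enumeration of $n$-simplices; the additional content is that the resulting labels on each alternating tree $T_0$ are precisely the lexicographic ones of Section \ref{reductionsB1}, which I would verify by induction on the number of reductions, showing the label on each new edge $(i,k)$ is forced by its position in the sequence.

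The main obstacle will be the noncommutative part: verifying that a ``good'' labeling is preserved under reductions (so the algorithm never gets stuck needing an illegal commutation), and that the surviving labels on each output tree match the lexicographic rule. This is essentially a bookkeeping argument, but it requires a careful inductive invariant relating the order of the remaining $x_{ij}$'s in the word to the combinatorics of the partially-formed alternating tree. The commutative statement, by contrast, is an almost immediate corollary of the Reduction Lemma together with Theorem \ref{vol}.
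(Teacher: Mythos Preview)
Your commutative argument contains a genuine error: the parenthetical claim that ``every reduction stays inside $\overline T$ and preserves noncrossingness starting from the noncrossing $T$'' is false for $\mathcal{S}$-reductions. In $\t$ all variables commute, so from $x_{12}x_{24}x_{23}$ (a noncrossing successor of the path on $[4]$) one may commute to $x_{24}x_{12}x_{23}$ and reduce $x_{12}x_{23}$, producing the term $x_{24}x_{13}x_{12}$ whose graph has the crossing edges $(1,3)$ and $(2,4)$. The paper says this explicitly at the start of Section~\ref{reductionsB}: in the commutative case reductions on $G^{\mathcal S}[m]$ \emph{can} result in crossing graphs; it is precisely the restricted commutations in $\b$ that force noncrossingness (Lemma~\ref{huh}). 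So an arbitrary $\mathcal S$-reduced form need not be supported on noncrossing alternating trees at all, and you cannot identify its $\beta^0$ terms with the trees $T_0$ of Theorem~\ref{vol} directly.

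Once that step fails, your appeal to Theorem~\ref{vol} becomes circular. In the paper Theorems~\ref{vol} and~\ref{thm1} are proved together: the Reduction Lemma and the Ehrhart argument (Lemma~\ref{const}, Theorem~\ref{thm_forest}) show only that the \emph{number} $f_{T,n}$ of top-dimensional leaves is independent of the reduction tree; the identification $f_{T,n}=f_T$ (and hence the volume formula in Theorem~\ref{vol}) is obtained \emph{via} the noncommutative analysis, by exhibiting one specific $\mathcal B$-reduction tree whose leaves are exactly the noncrossing alternating spanning trees of $\overline T$ (Proposition~\ref{gen:non}, Theorem~\ref{gen:ajaj}, Proposition~\ref{f_{T, k}}), and then dropping the edge labels. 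In other words, the logical flow is the reverse of what you propose: the noncommutative case is the engine that proves both Theorem~\ref{vol} and the commutative part of Theorem~\ref{thm1}, not a bookkeeping afterthought. Your outline for the noncommutative part (good labels are preserved, reductions never get stuck, output labels are lexicographic) is essentially the right list of lemmas---these are Lemmas~\ref{huh}, \ref{cross}, \ref{labeling}---but you should recognize that this is where the real work lies, and that the commutative statement is a corollary of it rather than the other way around.
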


We generalize Theorem \ref{thm1} for any  $\beta$; see Sections \ref{sec:red} and \ref{sec:gen}.  Theorem \ref{thm1} can also be generalized for any forest $F$; see Sections \ref{sec:forest} and \ref{sec:shelling}.  Finally, we prove using noncommutative Gr\"obner bases techniques that:

\begin{theorem}
The reduced form $P^\mathcal{B}_n$ of any monomial $m$ is unique, up to commutations.
\end{theorem}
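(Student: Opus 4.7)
The plan is to apply Bergman's Diamond Lemma, viewing the defining relations of $\b$ as a noncommutative Gr\"obner basis over $\q[\beta]$. I would first pass to the $\q[\beta]$-linear span of the trace monoid $M$ on the alphabet $\{x_{ij}: 1\leq i<j\leq n+1\}$ that identifies any two words differing by commutations of disjoint-indexed generators; on this quotient the only rewriting rule is (\ref{red}). Uniqueness of the reduced form up to commutations is then equivalent to confluence of this single rule on $M$, and the Diamond Lemma (in Bergman's formulation for $k$-linear associative rewriting) reduces the verification to two ingredients: a compatible well-founded order under which every rewrite strictly descends, and the resolution of every critical overlap ambiguity.

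For termination, I would assign each generator the weight $w(x_{ij}) = (n+1) - (j-i) \in \{1,\ldots,n\}$, extend additively to monomials, and order polynomials by the Dershowitz--Manna multiset extension of the natural order on $\mathbb{Z}_{>0}$. The pattern $x_{ij}x_{jk}$ carries weight $2(n+1) - (k-i)$, while the three right-hand summands $x_{ik}x_{ij}$, $x_{jk}x_{ik}$, and $\beta x_{ik}$ of (\ref{red}) have weights $2(n+1) - (k-i) - (j-i)$, $2(n+1) - (k-i) - (k-j)$, and $(n+1) - (k-i)$, all strictly smaller. Hence every application of (\ref{red}) replaces one monomial of weight $W$ inside a polynomial by a sum of monomials of weights $<W$, which strictly decreases the multiset of weights and guarantees termination; note also that weight is invariant under the commutations built into $M$, so this is well-defined on $M$.

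For confluence, the rewrite pattern has length two, so there are no inclusion ambiguities, and because commutations are built into $M$ there are no commutation-only ambiguities left to examine; the unique critical pair is the overlap ambiguity $x_{ij}x_{jk}x_{kl}$ with $i<j<k<l$, where one may first reduce either $x_{ij}x_{jk}$ or $x_{jk}x_{kl}$. I would expand both reduction trees to full normal form, invoking commutations on disjoint pairs such as $(x_{ij},x_{kl})$ between rewrites to expose further instances of the pattern $x_{pq}x_{qr}$, and then compare the two polynomials monomial-by-monomial at each of the orders $\beta^0$, $\beta^1$, and $\beta^2$. The identity that has to hold is the associative analogue of the quantum Yang--Baxter equation, and it is precisely the asymmetric form of the right-hand side of (\ref{red}) --- the specific $x_{ik}x_{ij} + x_{jk}x_{ik} + \beta x_{ik}$ sequence, rather than a symmetric one --- that makes both branches collapse to the same polynomial in $M$.

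The main obstacle is the sheer size of this single confluence calculation: each branch unfolds into around a dozen monomials, several of which require secondary rewriting after a commutation to expose a new reducible pattern, and the $\beta$- and $\beta^2$-correction terms must be matched carefully across the two sides. Once the one identity on $x_{ij}x_{jk}x_{kl}$ is verified, the Diamond Lemma immediately delivers uniqueness of $P^{\mathcal{B}}_n$ for every monomial $m$ up to commutations, with no further ambiguity checks required.
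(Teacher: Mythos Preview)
Your proposal is correct and follows essentially the same route as the paper: the paper also works modulo the commutation relations (in the quotient $R=\q\langle\beta,x_{ij}\rangle/I_\beta$), shows that the generators form a noncommutative Gr\"obner basis by checking that the overlap relations on $x_{ij}x_{jk}x_{kl}$ reduce to zero, and deduces uniqueness of the normal form. The only cosmetic differences are that the paper uses a degree-then-lexicographic monomial order (with tips $x_{ij}x_{jk}$) in place of your weight function, and cites Green's formulation of noncommutative Gr\"obner bases rather than Bergman's Diamond Lemma.
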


 This paper  is organized as follows.   In Section \ref{sec:red}  we reformulate the reduction process in terms of graphs  and elaborate further on Theorem \ref{thm1} and its generalizations. In Section \ref{sec:root} we discuss acyclic root polytopes and relate them to  reductions via the Reduction Lemma. We prove the Reduction Lemma,  which translates reductions into polytope-language,  in Section \ref{sec:reduction_lemma}. In Section \ref{sec:forest} we use the Reduction Lemma to prove general theorems about reduced forms of $m^\mathcal{S}[F]$, the volume and Ehrhart polynomial of $\mathcal{P}(F)$, for any forest $F$.  The lemmas of  Section \ref{reductionsB}    indicate the significance of considering reduced forms in the noncommutative algebra $\b$.  In Section \ref{reductionsB1} we  prove Theorems \ref{vol} and \ref{thm1} for a special tree $T$. Theorems \ref{vol} and \ref{thm1} as well as their generalizations are proved in Section \ref{sec:gen}. In Section \ref{sec:shelling} we shell the canonical triangulation described in Theorem \ref{vol}, and provide an alternative way to obtain the Ehrhart polynomial of $\mathcal{P}(T)$ for a tree $T$. We conclude in Section \ref{sec:grobi} by proving that the reduced form $P^\mathcal{B}_n$ of any monomial $m$ is unique using noncommutative Gr\"obner bases techniques.
    
   \section{Reductions in terms of graphs}
\label{sec:red}


 
We can  phrase the reduction process described in Section \ref{sec:in}  in terms of graphs. This view will be useful throughout the paper.  Think of a monomial $m \in A$  as a directed graph $G$ on the vertex set $[n+1 ]$ with an edge directed from $i$ to $j$ for each appearance of $x_{ij}$ in $m$. Let $G^\mathcal{S}[m]$ denote this graph. If, however, we are in the noncommutative version of the problem, and   $m=\prod_{l=1}^p x_{i_l, j_l}$, then  we can think of $m$ as a directed graph $G$ on the vertex set $[n+1 ]$ with $p$ edges labeled $1, \ldots, p$, such that the edge labeled $l$ is directed from vertex $i_l$ to $j_l$.  Let $G^\mathcal{B}[m]$ denote the  edge-labeled graph just described.
  Let $(i, j)_a$  denote an   edge  $(i , j )$ labeled $a$. 
   It is straighforward to reformulate the  reduction rule (\ref{red}) in terms of reductions on graphs. If $m \in A$, then it reads as follows.

    The {\bf reduction rule for graphs:} Given   a graph $G_0$ on the vertex set $[n+1]$ and   $(i, j), (j, k) \in E(G_0)$ for some $i<j<k$, let   $G_1, G_2, G_3$ be graphs on the vertex set $[n+1]$ with edge sets
  \begin{eqnarray} \label{graphs}
E(G_1)&=&E(G_0)\backslash \{(j, k)\} \cup \{(i, k)\}, \nonumber \\
E(G_2)&=&E(G_0)\backslash \{(i, j)\} \cup \{(i, k)\},\nonumber \\ 
E(G_3)&=&E(G_0)\backslash \{(i, j)\} \backslash \{(j, k)\} \cup \{(i, k)\}. 
\end{eqnarray}

    We say that $G_0$ \textbf{reduces} to $G_1, G_2, G_3$ under the reduction rules defined by equations (\ref{graphs}).
 
 The reduction rule for graphs $G^\mathcal{B}[m]$ with $m \in B$ is  explained in Section \ref{reductionsB}.

An \textbf{$\mathcal{S}$-reduction tree} $\mathcal{T}^\mathcal{S}$  for a monomial $m_0$, or equivalently, the graph $G^\mathcal{S}[m_0]$,  is constructed as follows. 
  The root of $\mathcal{T}^\mathcal{S}$ is labeled by $G^\mathcal{S}[ m_0]$. Each node $G^\mathcal{S}[m]$ in $\mathcal{T}^\mathcal{S}$  has three children, which depend on the choice of the edges  of  $G^\mathcal{S}[m]$ on which we perform the reduction. Namely, if the reduction  is performed on edges $(i, j), (j, k) \in E(G^\mathcal{S}[m])$,  $i<j<k$, then the three children of  the node $G_0=G^\mathcal{S}[m]$   are labeled by the graphs   $G_1, G_2, G_3$   as described by equation (\ref{graphs}). For an example of an $\mathcal{S}$-reduction tree; see Figure \ref{fig} (disregard the edge-labels).

Summing the monomials to which the graphs labeling the  leaves  of the reduction tree $\mathcal{T}^\mathcal{S}$  correspond multiplied by suitable powers of $\beta$, we obtain a reduced form of  $m_0$.

\begin{figure}[htbp] 
\begin{center} 
\includegraphics{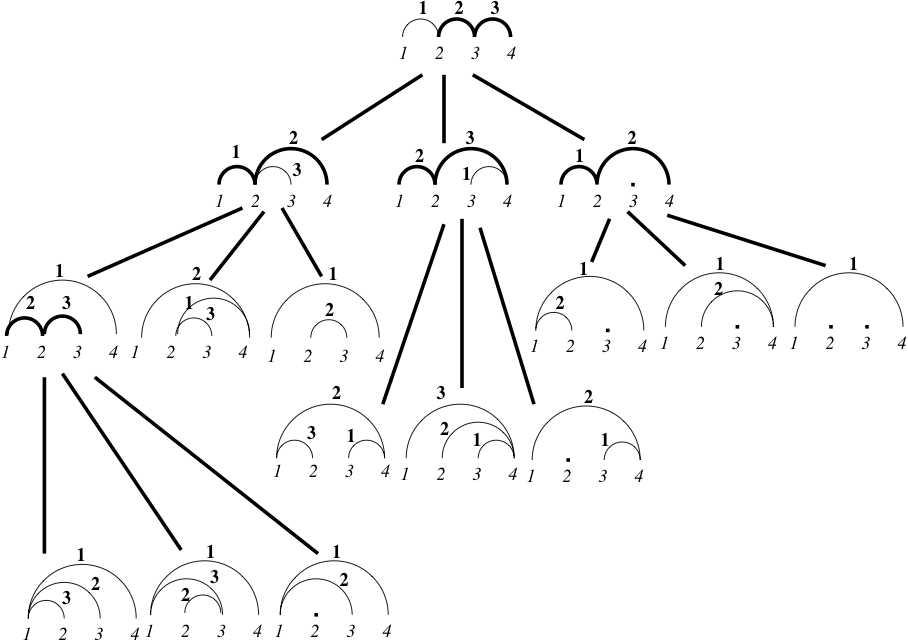} 
\caption{This is an $\mathcal{S}$-reduction tree with root labeled by $G^\mathcal{S}[x_{12}x_{23}x_{34}]$, when the edge-labels are disregarded. The boldface edges indicate where the reduction is performed. We can read off the following  reduced form of $x_{12}x_{23}x_{34}$ from the set of leaves: $x_{14}x_{13}x_{12}+x_{14}x_{23}x_{13}+\beta x_{14}x_{13}+x_{24}x_{14}x_{23}+\beta x_{14}x_{23} +x_{34}x_{14}x_{12}+x_{34}x_{24}x_{14}+\beta x_{34}x_{14}+\beta x_{14}x_{12}+\beta x_{24}x_{14} +\beta^2 x_{14}$. When the edge-labels are taken into account, this is the $\mathcal{B}$-reduction tree  corresponding to equation (\ref{ex2}).   Note that in the second child of the root we commuted edge-labels 1 and 2.} 
\label{fig} 
\end{center} 
\end{figure} 

Let $T$ be  a noncrossing tree on the vertex set $[n+1]$. In terms of reduction trees, Theorem \ref{thm1} states that the number of  leaves labeled by graphs with exactly $n$ edges of an $\mathcal{S}$-reduction tree  with root  labeled $T$     is independent of the particular $\mathcal{S}$-reduction tree. The generalization of Theorem \ref{thm1} for any $\beta$ states that  the number leaves labeled by graphs with exactly $k$ edges of an $\mathcal{S}$-reduction tree with root  labeled $T$,  is  independent of the particular $\mathcal{S}$-reduction tree for any $k$.  In terms of reduced forms we can write this as follows. 
  If $P_n^\mathcal{S}$ is the reduced form of a monomial $m^\mathcal{S}[T]$ for a noncrossing tree $T$, then  

 $$P_n^\mathcal{S}(x_{ij}=1)= \sum_{m=0}^{n-1} f_{T, n-m} \beta^m,$$ where 
 $f_{T, k} $ denotes the number of noncrossing alternating spanning forests of $\overline T$ with $k$ edges and   additional technical  requirements detailed in Section \ref{sec:gen}. Also,  if $P_n^\mathcal{B}$ is the reduced form of a monomial $m^\mathcal{B}[T]$ for a noncrossing good tree $T$   (defined in Section \ref{reductionsB}), then   $$P^\mathcal{B}_n(x_{ij})=\sum_F  x^F,$$  where the sum runs over all  noncrossing alternating spanning forests $F$  of $\overline T$ with lexicographic  edge-labels  (defined in Section \ref{reductionsB1}) and additional  technical  requirements  detailed in Section \ref{sec:gen}. 
 
If we   consider the reduced forms of the path monomial $w=\prod_{i=1}^n x_{i, i+1}$, then  $T=P=([n+1], \{(i, i+1) \mid i \in [n] \})$, and $f_{P, k}$  is simply  the number of noncrossing alternating spanning forests on $[n+1]$ with $k$ edges containing edge $(1, n+1)$. Furthermore,
 $P^\mathcal{B}_n(x_{ij})=\sum_F  x^F,$   where the sum runs over all   noncrossing alternating spanning forests $F$ on $[n+1]$ with lexicographic  edge-labels and  containing edge $(1, n+1)$. See Section \ref{reductionsB1} for the treatment of this special case.

\section{Acyclic root polytopes}
 \label{sec:root}
 
In the terminology of \cite{p1}, a root polytope of   type $A_{n}$ is  the convex hull of the origin and some of the points $e_i-e_j$ for $1\leq i<j \leq n+1$, where $e_i$ denotes the $i^{th}$ coordinate vector in $\mathbb{R}^{n+1}$. A very special root polytope is the full root polytope  $$\mathcal{P}(A_{n}^+)=\textrm{ConvHull}(0,  e_{ij}^- \mid  1\leq i<j \leq n+1),$$  where  $e_{ij}^-=e_i-e_j$. We study a  class of root polytopes including $\mathcal{P}(A_{n}^+)$, which we now discuss. 

Let $G$ be a   graph on the vertex set $[n+1]$.  Define $$\mathcal{V}_G=\{e_{ij}^- \mid  (i, j) \in E(G), i<j\}, \mbox{ a set of vectors associated to $G$;}$$

 $$\mathcal{C}(G)=\langle \mathcal{V}_G \rangle :=\{\sum_{ e_{ij}^- \in \mathcal{V}_G}c_{ij} e_{ij}^- \mid  c_{ij}\geq 0\}, \mbox{ the cone associated to $G$; and } $$  
  $$\overline{\mathcal{V}}_G=\Phi^+ \cap \mathcal{C}(G), \mbox{ all the positive roots of type $A_n$ contained in $\mathcal{C}(G)$}, $$
   where $\Phi^+=\{e_{ij}^-  \mid1\leq i<j \leq n+1\}$ is the set of         positive roots of type $A_n$. The idea to consider the positive roots of a root system inside a cone appeared earlier in Reiner's work \cite{R1}, \cite{R2} on signed posets. 
 
The root polytope $\mathcal{P}(G)$ associated to graph $G$ is  
 
 \begin{equation} \label{eq1} \mathcal{P}(G)=\textrm{ConvHull}(0, e_{ij}^- \mid e_{ij}^- \in \overline{\mathcal{V}}_G)\end{equation} The root polytope $\mathcal{P}(G)$ associated to graph $G$ can also be defined as \begin{equation} \label{eq2} \mathcal{P}(G)=\mathcal{P}(A_n^+) \cap \mathcal{C}(G).\end{equation} The equivalence of these two definition is proved in Lemma \ref{equivalent} in Section \ref{sec:reduction_lemma}. 
  
 Note that $\mathcal{P}(A_{n}^+)=\mathcal{P}(P)$ for the path graph  $P=([n+1], \{(i, i+1) \mid i \in [n]\}).$  While the choice of $G$ such that  $\mathcal{P}(A_{n}^+)=\mathcal{P}(G)$ is not unique, it becomes unique if we require that $G$ is {\bf minimal}, that is for no edge $(i, j) \in E(G)$ can the corresponding vector $e_{ij}^-$ be written as a nonnegative linear combination of the vectors corresponding to the   edges $E(G) \backslash \{e\}$. Graph $P$ is minimal. 
 
 We can describe the vertices in $ \overline{\mathcal{V}}_G$ in terms of paths in $G$. 
     A \textbf{playable route} of a graph $G$ is an ordered sequence of edges $(i_1, j_1), (i_2, j_2), \ldots, (i_l, j_l) \in E(G)$ such that   $i_1<j_1=i_2<j_2 \ldots j_{l-1}=i_l<j_l$.
 
 \begin{lemma} \label{playable_route}
 Let $G$ be a graph on the vertex set $[n+1]$. Any $v \in  \overline{\mathcal{V}}_G$ is $v=e_{i_1}-e_{j_l}$ for some playable route  $(i_1, j_1), (i_2, j_2), \ldots, (i_l, j_l)$ of $G$. If in addition $G$ is   acyclic, then the correspondence between playable routes of $G$ and vertices in $\overline{\mathcal{V}}_G$ is a bijection.
 \end{lemma}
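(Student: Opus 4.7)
The plan is to reduce both claims to the statement that any nonnegative combination of vectors $e_{ij}^-$ (with $i<j$) summing to $e_a-e_b$ decomposes as a nonnegative sum of directed path flows from $a$ to $b$ in $G$. I would first dispose of the easy direction: for any playable route $(i_1,j_1),\ldots,(i_l,j_l)$ the telescoping identity
\[
\sum_{k=1}^{l} e_{i_k,j_k}^- \;=\; e_{i_1}-e_{j_l}
\]
(using $j_k=i_{k+1}$) exhibits $e_{i_1}-e_{j_l}$ as a nonnegative integer combination of elements of $\mathcal{V}_G$, and since $i_1<j_l$ it lies in $\overline{\mathcal{V}}_G$.

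For the first half of the lemma, take $v=e_a-e_b\in\overline{\mathcal{V}}_G$ with $a<b$, and write $v=\sum_{(i,j)\in E(G),\, i<j}c_{ij}\,e_{ij}^-$ with $c_{ij}\ge 0$. I would interpret $(c_{ij})$ as a flow on $G$ with each edge directed from its smaller to its larger endpoint; reading off coordinates shows this is a unit flow from source $a$ to sink $b$ (net outflow $1$ at $a$, $-1$ at $b$, $0$ elsewhere). Because the orientation always runs from smaller to larger index, the oriented graph is a DAG, so the flow admits no directed-cycle component. Standard flow decomposition then writes $(c_{ij})$ as a nonnegative combination of directed $a\to b$ path flows, and picking any such path produces, when its edges are listed in order, a playable route from $a$ to $b$.

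For the bijection in the acyclic case, surjectivity was just shown, so only injectivity remains: two distinct playable routes with endpoints $a,b$ would, as undirected subgraphs of $G$, give two edge-distinct $a,b$-paths in $G$, hence a cycle, contradicting acyclicity. The main obstacle will be pinning down the flow decomposition step when the $c_{ij}$ are arbitrary nonnegative reals; I plan to handle this by induction on the number of edges with $c_{ij}>0$. Since the $a$-coordinate of $v$ equals $\sum_{j>a}c_{aj}-\sum_{i<a}c_{ia}=1$, some $c_{aj_1}>0$; greedily extending using positive outgoing weight at each subsequent vertex (which must be available until $b$ is reached, else flow conservation fails there) produces a directed $a\to b$ path, and subtracting the minimum weight along this path strictly reduces the number of active edges, giving the induction.
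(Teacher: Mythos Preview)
Your argument is correct. The paper itself does not supply a proof of this lemma, stating only that it ``is straightforward, and is left to the reader,'' so there is nothing to compare against; your flow-decomposition approach is a natural way to fill in the details.

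A couple of minor remarks. First, for the existence direction you only need a single $a\to b$ directed path, so the full inductive decomposition of the flow is unnecessary: the greedy walk you describe already terminates at $b$ (since vertices strictly increase along the walk and flow conservation forces positive outgoing weight at every intermediate vertex), and that one path is your playable route. Second, your injectivity argument is fine as stated, but you might note explicitly that the endpoints $a,b$ are determined by $v=e_a-e_b$, so distinct playable routes mapping to the same $v$ must share endpoints; then two distinct $a,b$-paths in $G$ force a cycle, as you say.
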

 
 The proof of Lemma  \ref{playable_route} is straightforward, and  is left   to the reader.

 Define 
    $$\mathcal{L}_n=\{ G=([n+1], E(G))    \mid  \mbox{ $G$ is an acyclic graph}\},$$ and 
    $$\mathcal{L}(A_{n}^+)=\{ \mathcal{P}(G) \mid  G \in \mathcal{L}_n  \}, \mbox{the set of \textbf{acyclic root polytopes}.}$$        
    
    Note that the condition that $G$ is an acyclic graph is equivalent to  $\mathcal{V}_G$ being  a set of linearly independent vectors. 
    
    The full root polytope  $ \mathcal{P}(A_{n}^+) \in \mathcal{L}(A_{n}^+)$, since   the path graph $ P$ is acyclic. We show below  how to obtain central triangulations for all polytopes $\mathcal{P}   \in  \mathcal{L}(A_{n}^+)$. A \textbf{central triangulation} of a $d$-dimensional  root polytope $\mathcal{P}$ is a collection  of $d$-dimensional simplices with disjoint interiors whose union is  $\mathcal{P}$, the vertices of which are vertices of $\mathcal{P}$ and the origin is a vertex of all of them. Depending on the context we at times   take the intersections of these maximal simplices to be part of the triangulation.  


    We now  state the crucial lemma which relates root polytopes and algebras $\t$ and $\b$ defined in Section \ref{sec:in}.

    \begin{lemma} \label{reduction_lemma} \textbf{(Reduction Lemma)} 
Given   a graph $G_0 \in \mathcal{L}_n$ with $d$ edges let  $(i, j), (j, k) \in E(G_0)$ for some $i<j<k$ and $G_1, G_2, G_3$ as described by equations (\ref{graphs}).   Then  $G_1, G_2, G_3 \in \mathcal{L}_n$,
$$\mathcal{P}(G_0)=\mathcal{P}(G_1) \cup \mathcal{P}(G_2)$$   where all polytopes  $\mathcal{P}(G_0), \mathcal{P}(G_1), \mathcal{P}(G_2)$ are   $d$-dimensional and    
$$\mathcal{P}(G_3)=\mathcal{P}(G_1) \cap \mathcal{P}(G_2)  \mbox{   is $(d-1)$-dimensional. } $$
\end{lemma}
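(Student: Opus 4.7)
The plan is to reduce the lemma to a cone-decomposition statement and then apply the description $\mathcal{P}(G)=\mathcal{P}(A_n^+)\cap\mathcal{C}(G)$ from equation (\ref{eq2}). Since $G_0\in\mathcal{L}_n$ is acyclic, it is a forest, so its edge vectors $\mathcal{V}_{G_0}$ are linearly independent. Removing $(j,k)$ from $G_0$ separates $j$ and $k$ into different components of $G_0\setminus\{(j,k)\}$, while $i$ and $j$ still lie in a common component via $(i,j)$; hence adding $(i,k)$ creates no cycle and $G_1\in\mathcal{L}_n$. A symmetric argument handles $G_2$, and $G_3$, being a subgraph of $G_1$, is also acyclic.

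The geometric heart of the argument is the identity $e_{ik}^-=e_{ij}^-+e_{jk}^-$, which places the ray through $e_{ik}^-$ inside the two-dimensional cone $\langle e_{ij}^-,e_{jk}^-\rangle$ and splits it into the subcones $\langle e_{ij}^-,e_{ik}^-\rangle$ and $\langle e_{ik}^-,e_{jk}^-\rangle$. I would first show $\mathcal{C}(G_0)=\mathcal{C}(G_1)\cup\mathcal{C}(G_2)$. The inclusion $\supseteq$ is clear from $e_{ik}^-\in\mathcal{C}(G_0)$, and for $\subseteq$, any $p\in\mathcal{C}(G_0)$ can be written as $\alpha e_{ij}^-+\beta e_{jk}^-+r$ with $\alpha,\beta\geq 0$ and $r$ a nonnegative combination of $\{e_{pq}^-:(p,q)\in E(G_0)\setminus\{(i,j),(j,k)\}\}$; then if $\alpha\geq\beta$ one rewrites $p=(\alpha-\beta)e_{ij}^-+\beta e_{ik}^-+r\in\mathcal{C}(G_1)$, and otherwise $p=\alpha e_{ik}^-+(\beta-\alpha)e_{jk}^-+r\in\mathcal{C}(G_2)$.

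The intersection identity $\mathcal{C}(G_1)\cap\mathcal{C}(G_2)=\mathcal{C}(G_3)$ is the main obstacle, and the step that genuinely needs $G_0$ to be a forest. The inclusion $\supseteq$ is immediate. For $\subseteq$, given $p\in\mathcal{C}(G_1)\cap\mathcal{C}(G_2)$, I would write
\begin{equation*}
p=\alpha e_{ij}^-+\gamma e_{ik}^-+r=\beta e_{jk}^-+\gamma' e_{ik}^-+r',
\end{equation*}
with $\alpha,\beta,\gamma,\gamma'\geq 0$ and $r,r'$ nonnegative combinations of the shared edges. Substituting $e_{ik}^-=e_{ij}^-+e_{jk}^-$ on both sides yields two expansions of $p$ in the basis $\mathcal{V}_{G_0}$; matching coefficients (unique because $G_0$ is a forest) gives $\alpha+\gamma=\gamma'$ and $\gamma=\beta+\gamma'$, which force $\alpha+\beta=0$ and hence $\alpha=\beta=0$ and $\gamma=\gamma'$. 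Consequently $p=\gamma e_{ik}^-+r\in\mathcal{C}(G_3)$.

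To finish, intersecting the two cone identities with $\mathcal{P}(A_n^+)$ and invoking (\ref{eq2}) produces the asserted polytope identities. For the dimensions, note that for any forest $G$ on $[n+1]$ the $|E(G)|$ vectors $\mathcal{V}_G$ are linearly independent, so $\mathcal{P}(G)$ contains the $|E(G)|$-simplex $\mathrm{ConvHull}(0,\mathcal{V}_G)$ and lies in the $|E(G)|$-dimensional span of $\mathcal{V}_G$; hence $\dim\mathcal{P}(G)=|E(G)|$. This gives $d$ for each of $\mathcal{P}(G_0),\mathcal{P}(G_1),\mathcal{P}(G_2)$ and $d-1$ for $\mathcal{P}(G_3)$.
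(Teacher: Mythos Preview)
Your proof is correct and follows essentially the same approach as the paper: you first establish the cone identities $\mathcal{C}(G_0)=\mathcal{C}(G_1)\cup\mathcal{C}(G_2)$ and $\mathcal{C}(G_1)\cap\mathcal{C}(G_2)=\mathcal{C}(G_3)$ by comparing coefficients in the basis $\mathcal{V}_{G_0}$ (exactly the content of the paper's Cone Reduction Lemma), and then intersect with $\mathcal{P}(A_n^+)$ via equation~(\ref{eq2}) to obtain the polytope statements. The paper packages the cone argument as a separate lemma but the computations are identical to yours.
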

      \medskip
      
       What the Reduction Lemma really says is that performing a reduction on graph $G_0 \in \mathcal{L}_n$  is the same as ``cutting" the $d$-dimensional  polytope  $\mathcal{P}(G_0)$ into two $d$-dimensional polytopes $\mathcal{P}(G_1)$ and $ \mathcal{P}(G_2)$, whose vertex  sets are  subsets of the vertex set of   $\mathcal{P}(G_0)$, whose interiors are disjoint, whose union is $\mathcal{P}(G_0)$, and whose intersection is a facet of both. We prove the Reduction Lemma in Section \ref{sec:reduction_lemma}.

\section{ The proof of the Reduction Lemma}
\label{sec:reduction_lemma}

This section is devoted to proving the Reduction Lemma (Lemma \ref{reduction_lemma}).  As we shall see in Section \ref{sec:forest}, the Reduction  Lemma is the ``secret force"  that makes   everything fall into its place for acyclic root polytopes. We start by providing a simple lemma which characterizes the root polytopes which are simplices, then in  Lemma \ref{equivalent} we prove that equations (\ref{eq1}) and (\ref{eq2}) are equivalent definitions for the root polytope $\mathcal{P}(G)$,  and finally we prove the Cone Reduction Lemma (Lemma \ref{cone_reduction_lemma}), which, together with Lemma \ref{equivalent} implies the Reduction Lemma.

Lemma \ref{simplex} is implied by the results in \cite[Lemma 13.2]{p1}, but for the sake of completeness   we provide a proof of it. Note that the exact definitions and notations in \cite{p1} are different from ours. The idea for part of the proof of Lemma \ref{equivalent} appears in \cite{p1, fong} with different purposes. 
  
  \begin{lemma} \label{simplex}  (Cf. \cite[Lemma 13.2]{p1})
 For a  graph $G$ on $[n+1]$ vertices and $d$ edges, the polytope $\mathcal{P}(G)$ is a simplex if and only if $G$ is alternating and acyclic. If  $\mathcal{P}(G)$ is a simplex, then its $d$-dimensional normalized volume  $\mbox{\em vol}_d \,\mathcal{P}(G)=\frac{1}{d!}.$
  \end{lemma}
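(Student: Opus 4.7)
The plan is to exploit the identity "(number of vertices) $=$ (dimension) $+\, 1$" characterizing simplices, and to track each side separately in terms of $\overline{\mathcal{V}}_G$ and $\mathcal{V}_G$. First I would identify the vertex set of $\mathcal{P}(G)$. Since the vectors $e_{ij}^-$ are pairwise distinct extreme points of the ambient polytope $\mathcal{P}(A_n^+)$, each $e_{ij}^- \in \overline{\mathcal{V}}_G$ remains a vertex of the subpolytope $\mathcal{P}(G)$, and the origin is separated from all of them by the linear functional $x \mapsto -\max_i x_i$. Hence $\mathcal{P}(G)$ has precisely $1 + |\overline{\mathcal{V}}_G|$ vertices. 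Since $0 \in \mathcal{P}(G)$, its dimension equals $\dim \mathrm{span}(\mathcal{V}_G)$, which is $d$ if and only if $G$ is acyclic.

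Next I would combine the two counts. Because $\mathcal{V}_G \subseteq \overline{\mathcal{V}}_G$ with $|\mathcal{V}_G| = d$, one always has the sandwich
\[
\dim \mathcal{P}(G) \;\leq\; d \;\leq\; |\overline{\mathcal{V}}_G|.
\]
The simplex condition $\dim \mathcal{P}(G) = |\overline{\mathcal{V}}_G|$ therefore forces equality throughout, yielding both that $G$ is acyclic and that $\overline{\mathcal{V}}_G = \mathcal{V}_G$. With acyclicity in hand, the bijective part of Lemma \ref{playable_route} turns $\overline{\mathcal{V}}_G = \mathcal{V}_G$ into the statement that every playable route of $G$ has length $1$, which is exactly the alternating property. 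Conversely, if $G$ is alternating and acyclic, Lemma \ref{playable_route} gives $\overline{\mathcal{V}}_G = \mathcal{V}_G$ of cardinality $d$, and acyclicity makes the corresponding vectors linearly independent; then $\mathcal{P}(G)$ is the convex hull of $0$ and $d$ linearly independent vectors, hence a $d$-simplex.

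For the volume, the edge vectors $e_{i_k} - e_{j_k}$ of the forest $G$ are the columns of a signed vertex-edge incidence matrix, which is totally unimodular; consequently they form a $\mathbb{Z}$-basis of the sublattice they span inside the root lattice $\{x \in \mathbb{Z}^{n+1} : \sum_i x_i = 0\}$. The normalized $d$-dimensional volume of the simplex on vertices $0, e_{i_1}-e_{j_1}, \ldots, e_{i_d}-e_{j_d}$ is therefore $\frac{1}{d!}$, as claimed.

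The main obstacle I anticipate is bookkeeping around which step uses acyclicity and which uses the alternating property: the bijection half of Lemma \ref{playable_route} requires $G$ acyclic, so the naive implication "$G$ not alternating $\Rightarrow |\overline{\mathcal{V}}_G| > d$" is not directly available when $G$ has a cycle. Routing the argument through the sandwich $\dim \mathcal{P}(G) \leq d \leq |\overline{\mathcal{V}}_G|$ sidesteps this pitfall by letting the simplex condition itself enforce acyclicity before the bijection is invoked, which is the key structural move.
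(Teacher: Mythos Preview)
Your argument is correct and follows essentially the same route as the paper: both reduce the simplex characterization to the pair of conditions ``edge vectors linearly independent'' (i.e.\ $G$ acyclic) and ``$\overline{\mathcal V}_G=\mathcal V_G$'' (i.e.\ $G$ alternating, via Lemma~\ref{playable_route}), and both read off the volume from unimodularity. Your sandwich $\dim\mathcal P(G)\le d\le|\overline{\mathcal V}_G|$ is in fact a tidier way to order the logic than the paper's, which begins with ``for a minimal graph $G$'' without justifying the reduction; your version establishes acyclicity before invoking the bijective half of Lemma~\ref{playable_route}, avoiding that wrinkle.
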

                   
   \proof
   It follows from equation (\ref{eq1}) that for a minimal graph $G$ the polytope  $\mathcal{P}(G)$ is a simplex if and only if  the vectors corresponding to the edges of $G$ are linearly independent and  $\mathcal{C}(G) \cap \Phi^+=\mathcal{V}_G$.  
   
  The vectors corresponding to the edges of $G$ are linearly independent if and only if  $G$ is acyclic. 
  By Lemma \ref{playable_route},   $\mathcal{C}(G) \cap \Phi^+=\mathcal{V}_G$  if and only if  $G$ contains no edges $(i, j), (j, k)$ with $i<j<k$, i.e. $G$ is alternating. 

That $\mbox{\rm vol}_d\, \mathcal{P}(G)=\frac{1}{d!}$ follows from the unimodality of $\Phi^+$.

   \qed                

\begin{lemma} \label{equivalent}
For any  graph $G$ on the vertex set $[n+1]$,  
 
 $$\emph{ConvHull}(0, e_{ij}^- \mid e_{ij}^- \in \overline{\mathcal{V}}_G)=\mathcal{P}(A_n^+) \cap \mathcal{C}(G).$$ 
\end{lemma}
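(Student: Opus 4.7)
The inclusion $\subseteq$ is immediate from convexity, since every defining vertex of the left-hand side --- namely $0$ and the roots $e_{ij}^- \in \overline{\mathcal{V}}_G$ --- is a vertex of $\mathcal{P}(A_n^+)$ and also lies in the cone $\mathcal{C}(G)$ by definition of $\overline{\mathcal{V}}_G$. The work lies entirely in the reverse inclusion, which I would handle by a flow-decomposition argument built on Lemma \ref{playable_route}.

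Pick $p \in \mathcal{P}(A_n^+) \cap \mathcal{C}(G)$. Because $p \in \mathcal{C}(G)$, write
$$p = \sum_{(i,j) \in E(G)} c_{ij}\, e_{ij}^-, \qquad c_{ij} \ge 0,$$
and view $(c_{ij})$ as a nonnegative flow on the DAG obtained by orienting each edge from its smaller endpoint to its larger one. A direct computation shows that the net out-flow (divergence) at vertex $k$ is exactly the $k$th coordinate $p_k$. I would then extract source-to-sink path flows greedily: starting from a vertex $u$ with current divergence $p_u>0$, follow an out-edge of positive flow and continue, using flow conservation at each interior vertex to find a positive out-edge, until reaching some $v$ with $p_v<0$ (termination is forced by acyclicity of the orientation). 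Subtracting along this directed path $P$ the amount $f_P := \min(\min_{e \in P} c_e,\, p_u,\, -p_v)$ reduces the total positive supply $S(p) := \sum_k \max(p_k,0)$ by exactly $f_P$, leaves every intermediate divergence unchanged, and drives one of the constraints to zero; iterating exhausts the flow and yields
$$p \;=\; \sum_P f_P\, e_{u_P,v_P}^-, \qquad \sum_P f_P = S(p).$$
Each path $P$ visits strictly increasing vertices, hence is a playable route of $G$, so Lemma \ref{playable_route} places every $e_{u_P,v_P}^-$ in $\overline{\mathcal{V}}_G$.

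It then suffices to show $S(p) \le 1$, and this is where the hypothesis $p \in \mathcal{P}(A_n^+)$ enters. If $p = \sum_{i<j} \lambda_{ij}\, e_{ij}^-$ with $\lambda_{ij}\ge 0$ and $\sum\lambda_{ij}\le 1$, then $p_k \le \sum_{j>k}\lambda_{kj}$ coordinate-wise, so
$$S(p) \;=\; \sum_k \max(p_k,0) \;\le\; \sum_k \sum_{j>k}\lambda_{kj} \;=\; \sum_{i<j}\lambda_{ij} \;\le\; 1.$$
Assigning the leftover weight $1 - \sum_P f_P \ge 0$ to the origin then expresses $p$ as a genuine convex combination of $0$ and elements of $\overline{\mathcal{V}}_G$.

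The step that requires the most care is the flow-decomposition itself --- specifically, verifying that each greedy path terminates at a true sink (not at an interior vertex with zero divergence) and that the accounting $\sum_P f_P = S(p)$ is exact. This is where acyclicity of the smaller-to-larger orientation and the invariance of intermediate divergences under each path-subtraction are essential. Everything else reduces to routine coordinate bookkeeping.
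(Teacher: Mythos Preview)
Your argument is correct and is genuinely different from the paper's. The paper proceeds indirectly: it takes an arbitrary central triangulation of $\sigma(\overline{G})=\textrm{ConvHull}(0, e_{ij}^- \mid e_{ij}^- \in \overline{\mathcal{V}}_G)$, observes that each top-dimensional cell must be $\sigma(F)$ for an \emph{alternating} forest $F$ (otherwise an extra root would lie in the cone), and then checks the identity $\sigma(F)=\mathcal{P}(A_n^+)\cap\mathcal{C}(F)$ for each such $F$ using the $\ell_1$-norm observation that the facet of $\sigma(F)$ opposite $0$ lies on $|x_1|+\cdots+|x_{n+1}|=2$, while $\mathcal{P}(A_n^+)$ sits in $|x_1|+\cdots+|x_{n+1}|\le 2$. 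Patching these cones back together gives the lemma.

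Your route avoids triangulations entirely: the greedy source-to-sink decomposition on the DAG produces an explicit convex combination, and the inequality $S(p)\le 1$ plays the role of the paper's $\ell_1$-bound. This is more elementary and constructive, and your termination analysis is sound (acyclicity of the small-to-large orientation forces the walk to end at a vertex of negative divergence, each step zeroes out one constraint, and once $S(p)=0$ the residual flow vanishes on a DAG). The paper's approach, by contrast, buys something it will reuse immediately afterward: it foregrounds the alternating forests $F$ and the identity $\sigma(F)=\mathcal{P}(A_n^+)\cap\mathcal{C}(F)$, which are the building blocks of the Reduction Lemma and of the canonical triangulations that drive the rest of the paper.
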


\proof For a graph $H$ on the vertex set $[n+1]$, let $\sigma(H)=\textrm{ConvHull}(0, e_{ij}^- \mid (i, j) \in H, i<j)$. Then, by Lemma \ref{playable_route}, $\sigma(\overline{G} )=\textrm{ConvHull}(0, e_{ij}^- \mid e_{ij}^- \in \overline{\mathcal{V}}_G)$. Let $\sigma(\overline{G} )$ be a $d$-dimensional polytope for some $d \leq n$ and consider any central triangulation of it:  $\sigma(\overline{G} )=\cup_{F \in \mathcal{F}}\sigma(F)$, where $\{\sigma(F)\}_{F \in \mathcal{F}}$ is a set of $d$-dimensional simplices with disjoint interiors, $E(F) \subset E(\overline{G})$, $F \in \mathcal{F}$.  Since $\sigma(\overline{G} )=\cup_{F \in \mathcal{F}}\sigma(F)$ is a central triangulation, it follows that $\sigma(F)=\sigma(\overline{G} ) \cap \mathcal{C}(F)$, for $F \in \mathcal{F}$, and $\mathcal{C}(G)=\cup_{F \in \mathcal{F}}  \mathcal{C}(F)$.

Since $\sigma(F)$, $F \in \mathcal{F}$, is a  $d$-dimensional simplex, it follows that $F$ is a forest with $d$ edges. Furthermore, $F \in \mathcal{F}$ is an alternating forest, as otherwise $(i, j), (j, k) \in E(F) \subset E(\overline{G})$, for some $i<j<k$ and while $e_{ik}^-=e_{ij}^-+e_{jk}^- \in \sigma(\overline{G} ) \cap \mathcal{C}(F)$, $e_{ik}^- \not \in \sigma(F)$, contradicting that  $\cup_{F \in \mathcal{F}}\sigma(F)$ is a central triangulation of $\sigma(\overline{G} )$. Thus,  $\overline{F}=F$, and  $\sigma(F)=\sigma(\overline{F})$. It is clear that  
 $\sigma(\overline{F})=\textrm{ConvHull}(0, e_{ij}^- \mid e_{ij}^- \in \overline{\mathcal{V}}_F)\subset \mathcal{P}(A_n^+) \cap \mathcal{C}(F),$ $F \in \mathcal{F}$. Since if $x=(x_1, \ldots, x_{n+1})$ is in the facet of   $\sigma(\overline{F})$ opposite the origin, then  $|x_1|+\cdots+|x_{n+1}|=2$ and for any point  $x=(x_1, \ldots, x_{n+1}) \in \mathcal{P}(A_n^+) $, $|x_1|+\cdots+|x_{n+1}|\leq 2$ it follows that  $\mathcal{P}(A_n^+) \cap \mathcal{C}(F)\subset \sigma(\overline{F})$. Thus, $\sigma(\overline{F})=\mathcal{P}(A_n^+) \cap \mathcal{C}(F)$.  Finally, $\textrm{ConvHull}(0, e_{ij}^- \mid e_{ij}^- \in \overline{\mathcal{V}}_G)=\sigma(\overline{G} )=\cup_{F \in \mathcal{F}}\sigma(F)=\cup_{F \in \mathcal{F}}\sigma(\overline{F})
=\cup_{F \in \mathcal{F}}(\mathcal{P}(A_n^+) \cap \mathcal{C}(F))=\mathcal{P}(A_n^+) \cap (\cup_{F \in \mathcal{F}}\mathcal{C}(F)) = \mathcal{P}(A_n^+) \cap \mathcal{C}(G)$ as desired.

\qed

    \begin{lemma} \label{cone_reduction_lemma} \textbf{(Cone Reduction Lemma)} 
Given   a graph $G_0 \in \mathcal{L}_n$ with $d$ edges, let   $G_1, G_2, G_3$ be the graphs described as by  equations (\ref{graphs}).   Then  $G_1, G_2, G_3 \in \mathcal{L}_n$,
$$\mathcal{C}(G_0)=\mathcal{C}(G_1) \cup \mathcal{C}(G_2)$$   where all cones  $\mathcal{C}(G_0), \mathcal{C}(G_1), \mathcal{C}(G_2)$ are   $d$-dimensional and    
$$\mathcal{C}(G_3)=\mathcal{C}(G_1) \cap \mathcal{C}(G_2)  \mbox{   is $(d-1)$-dimensional. } $$
\end{lemma}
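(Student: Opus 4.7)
The plan is to reduce everything to the single linear identity $e_{ik}^-=e_{ij}^-+e_{jk}^-$, which geometrically says that the ray through $e_{ik}^-$ lies inside the $2$-dimensional cone $\langle e_{ij}^-,e_{jk}^-\rangle$ and splits it into the two subcones $\langle e_{ij}^-,e_{ik}^-\rangle$ and $\langle e_{jk}^-,e_{ik}^-\rangle$. The Cone Reduction Lemma is simply this $2$-dimensional ``cutting'' picture, tensored trivially with the rest of the generators of $\mathcal{C}(G_0)$. All three assertions will then follow from elementary linear algebra applied to the generating set $\mathcal{V}_{G_0}$, exploiting the fact (used implicitly throughout the paper) that $G$ is acyclic if and only if $\mathcal{V}_G$ is linearly independent.

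First I would establish the acyclicity and dimensionality claims. Since $G_0$ is acyclic, $\mathcal{V}_{G_0}$ is a set of $d$ linearly independent vectors, and $\mathrm{span}(\mathcal{V}_{G_0})=\mathrm{span}(\mathcal{V}_{G_1})=\mathrm{span}(\mathcal{V}_{G_2})$ because $e_{ik}^-=e_{ij}^-+e_{jk}^-$. Hence $\mathcal{V}_{G_1}$ and $\mathcal{V}_{G_2}$ are also $d$ linearly independent vectors, which shows that $G_1,G_2\in\mathcal{L}_n$ and that $\mathcal{C}(G_1),\mathcal{C}(G_2)$ are $d$-dimensional. For $G_3$, a putative relation $c\,e_{ik}^-+\sum_{e\in \mathcal{V}_{G_0}\setminus\{e_{ij}^-,e_{jk}^-\}}c_e\,e=0$ rewrites as $c\,e_{ij}^-+c\,e_{jk}^-+\sum c_e\,e=0$; by linear independence of $\mathcal{V}_{G_0}$ all coefficients vanish, so $\mathcal{V}_{G_3}$ is linearly independent, $G_3\in\mathcal{L}_n$, and $\mathcal{C}(G_3)$ is $(d-1)$-dimensional.

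Next, for the decomposition $\mathcal{C}(G_0)=\mathcal{C}(G_1)\cup\mathcal{C}(G_2)$, the inclusion $\supseteq$ is immediate because $e_{ik}^-\in\mathcal{C}(G_0)$, so every generator of $\mathcal{C}(G_1)$ and $\mathcal{C}(G_2)$ lies in $\mathcal{C}(G_0)$. For $\subseteq$, write an arbitrary $x\in\mathcal{C}(G_0)$ as
\[
x=a\,e_{ij}^-+b\,e_{jk}^-+\sum_{e\in \mathcal{V}_{G_0}\setminus\{e_{ij}^-,e_{jk}^-\}}c_e\,e,\qquad a,b,c_e\geq 0.
\]
If $a\geq b$, use $b\,e_{ij}^-+b\,e_{jk}^-=b\,e_{ik}^-$ to rewrite $x=(a-b)e_{ij}^-+b\,e_{ik}^-+\sum c_e\,e\in\mathcal{C}(G_1)$; if $a\leq b$, symmetrically $x=a\,e_{ik}^-+(b-a)e_{jk}^-+\sum c_e\,e\in\mathcal{C}(G_2)$.

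The main remaining step, and probably the most delicate one, is the intersection statement $\mathcal{C}(G_3)=\mathcal{C}(G_1)\cap\mathcal{C}(G_2)$. The inclusion $\subseteq$ is clear because every generator of $\mathcal{C}(G_3)$ is a generator of both $\mathcal{C}(G_1)$ and $\mathcal{C}(G_2)$. For $\supseteq$, take a common point and write it simultaneously in $\mathcal{V}_{G_1}$-coordinates as $\alpha\,e_{ij}^-+\beta\,e_{ik}^-+\sum c_e\,e$ and in $\mathcal{V}_{G_2}$-coordinates as $\gamma\,e_{jk}^-+\delta\,e_{ik}^-+\sum c'_e\,e$, with all scalars nonnegative. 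Expanding each via $e_{ik}^-=e_{ij}^-+e_{jk}^-$ and invoking linear independence of $\mathcal{V}_{G_0}$, one equates the coefficients of $e_{ij}^-$, $e_{jk}^-$, and each remaining $e$, obtaining $\alpha+\beta=\delta$, $\beta=\gamma+\delta$, and $c_e=c'_e$. Subtracting gives $\alpha=-\gamma$, which with $\alpha,\gamma\geq 0$ forces $\alpha=\gamma=0$ and $\beta=\delta$, so the point equals $\beta\,e_{ik}^-+\sum c_e\,e\in\mathcal{C}(G_3)$. Combined with Lemma~\ref{equivalent}, which lets us pass from cones to the polytopes $\mathcal{P}(G_i)=\mathcal{P}(A_n^+)\cap\mathcal{C}(G_i)$, this yields the Reduction Lemma as stated.
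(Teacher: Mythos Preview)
Your proof is correct and follows essentially the same approach as the paper: both arguments exploit that $\mathcal{V}_{G_0}$ is a basis (by acyclicity), write the cones in these coordinates, and compare the coefficients of $e_{ij}^-$ and $e_{jk}^-$ to establish the union and intersection statements. The only cosmetic difference is that the paper phrases the intersection argument as ``$[\v(f_1)]v \geq [\v(f_2)]v$ in $\mathcal{C}(G_1)$ and $\leq$ in $\mathcal{C}(G_2)$,'' whereas you write the point in both $\mathcal{V}_{G_1}$- and $\mathcal{V}_{G_2}$-coordinates and derive $\alpha=-\gamma$; these are the same computation.
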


    \proof Let the edges of $G_0$ be 
 $f_1=(i, j), f_2=(j, k), f_3, \ldots, f_{d}$.   Let $\v(f_1),$ $\v(f_2),$ $\v(f_3), \ldots, \v(f_d)$  denote the vectors the edges of $G_0$ correspond to under the correspondence $\v: (i, j) \mapsto e_{ij}^-$, where $i<j$. Since  $G_0 \in \mathcal{L}_n$, the vectors  $\v(f_1), \v(f_2), \v(f_3), \ldots, \v(f_d)$ are linearly independent. By equations (\ref{graphs}),  $\mathcal{C}(G_0)=  \langle \v(f_1), \v(f_2), \v(f_3), \ldots, \v(f_d) \rangle $,   $\mathcal{C}(G_1)=  \langle \v(f_1), \v(f_1)+\v(f_2), \v(f_3), \ldots, \v(f_d) \rangle$,    $\mathcal{C}(G_2)=  \langle \v(f_1)+\v(f_2), \v(f_2),$ $ \v(f_3), \ldots, \v(f_d) \rangle$, 
 
 \noindent $\mathcal{C}(G_3)=\langle \v(f_1)+\v(f_2), \v(f_3), \ldots, \v(f_d) \rangle$. Thus,   $G_1, G_2, G_3 \in \mathcal{L}_n$, cones $\mathcal{C}(G_0),\mathcal{C}(G_1)$ and $\mathcal{C}(G_2)$ are $d$-dimensional, while cone $\mathcal{C}(G_3)$ is $(d-1)$-dimensional. 
 
 Clearly, $\mathcal{C}(G_1) \cup \mathcal{C}(G_2) \subset \mathcal{C}(G_0)$. Any vector $v \in \mathcal{C}(G_0)$   expressed in the basis $\v(f_1), \v(f_2), \v(f_3), \ldots, \v(f_d)$ satisfies either $[\v(f_1)]v \geq [\v(f_2)]v$ or $[\v(f_1)]v < [\v(f_2)]v$. Thus,  if $ v \in \mathcal{C}(G_0)$, then $v \in \mathcal{C}(G_1)$ or $v \in   \mathcal{C}(G_2)$.  Therefore,  $\mathcal{C}(G_0)=\mathcal{C}(G_1) \cup \mathcal{C}(G_2)$.

Clearly,  $\mathcal{C}(G_3) \subset \mathcal{C}(G_1) \cap \mathcal{C}(G_2)$.  Any $v \in \mathcal{C}(G_1)$ expressed in the basis $\v(f_1), \v(f_2), \v(f_3), \ldots, \v(f_d)$ satisfies $[\v(f_1)]v \geq [\v(f_2)]v$, while  $v \in \mathcal{C}(G_2)$ expressed in the basis $\v(f_1), \v(f_2), \v(f_3), \ldots, \v(f_d)$ satisfies   $[\v(f_1)]v \leq [\v(f_2)]v$. Thus,  $v \in \mathcal{C}(G_1)\cap  \mathcal{C}(G_2)$ expressed in the basis $\v(f_1), \v(f_2), \v(f_3), \ldots, \v(f_d)$ satisfies  $[\v(f_1)]v =[\v(f_2)]v$. Therefore, $ \mathcal{C}(G_1) \cap \mathcal{C}(G_2) \subset \mathcal{C}(G_3)$, leading to $ \mathcal{C}(G_1) \cap \mathcal{C}(G_2) = \mathcal{C}(G_3) $.
\qed

\medskip
  
         \noindent \textit{Proof of the  Reduction Lemma (Lemma \ref{reduction_lemma}).} Straightforward corollary of Lemmas \ref{equivalent} and \ref{cone_reduction_lemma}.\qed

\medskip
                       
    In Section \ref{sec:forest} we use  Lemmas \ref{reduction_lemma} and \ref{simplex} to prove general theorems about acyclic root polytopes, which can be specialized to yield proofs of parts of  Theorems \ref{vol} and \ref{thm1}.

\section{General theorems for acyclic root polytopes}
\label{sec:forest}

In this section we prove general theorems about acyclic root polytopes and    reduced forms of monomials $m^\mathcal{S}[F]$, for a forest $F$.

Given a polytope $\mathcal{P}\subset \mathbb{R}^{n+1}$, the {\bf $t^{th}$ dilate} of $\mathcal{P}$ is 
$$\displaystyle t \mathcal{P}=\{(tx_1, \ldots, tx_{n+1}) |  (x_1, \ldots, x_{n+1}) \in \mathcal{P}\}.$$

The {\bf Ehrhart polynomial of an integer polytope} $\mathcal{P}\subset \mathbb{R}^{n+1}$ is   
$$\displaystyle L_{\mathcal{P}} (t) = \# (t\mathcal{P} \cap \mathbb{Z}^{n+1}).$$

 For background on the  theory of Ehrhart  polynomials see \cite{br}.

\begin{lemma} \label{const}  Let $\mathcal{P}(G)^\circ=\bigsqcup_{\sigma^\circ \in  S} \sigma^\circ$, where $S$ is a collection of open simplices $\sigma^\circ$, such that  the origin is a vertex of each simplex in $S$ and the other vertices are from $\Phi^+$. 
Then the number of $i$-dimensional open simplices in $S$, denoted by $f_i$,   only depends on $\mathcal{P}(G)$, not on $S$ itself. 
\end{lemma}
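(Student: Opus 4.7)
The plan is to compute the number of lattice points in the interior of the dilate $t\mathcal{P}(G)$ in two ways: directly, which depends only on $\mathcal{P}(G)$, and by summing contributions from each open simplex in $S$, which exposes the $f_i$ as coefficients in a polynomial expansion. Comparing these two counts will force the $f_i$ to be intrinsic invariants of $\mathcal{P}(G)$.

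For the per-simplex count, the key step is to identify the Ehrhart contribution of each $\sigma^\circ \in S$. I would observe that a closed simplex $\sigma$ with vertex set consisting of the origin together with $i$ linearly independent vectors from $\Phi^+$ is unimodular, by the same unimodality property of the type $A_n$ root system that is invoked at the end of the proof of Lemma \ref{simplex}. Consequently $L_\sigma(t)=\binom{t+i}{i}$, and Ehrhart reciprocity yields $\#(t\sigma^\circ\cap\mathbb{Z}^{n+1}) = (-1)^i L_\sigma(-t) = \binom{t-1}{i}$ for every positive integer $t$.

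Since $S$ partitions $\mathcal{P}(G)^\circ$ as a disjoint union, summing the per-simplex counts over $S$ yields the polynomial identity
$$L_{\mathcal{P}(G)^\circ}(t) \;=\; \sum_{i\geq 0} f_i \binom{t-1}{i}$$
valid for all positive integers $t$. The left-hand side depends only on $\mathcal{P}(G)$, not on the choice of $S$. Because the polynomials $\binom{t-1}{i}$ have pairwise distinct degrees in $t$, they are linearly independent over $\mathbb{Q}$, so the coefficients $f_i$ are uniquely determined by the single polynomial $L_{\mathcal{P}(G)^\circ}$. This gives the claim.

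I do not anticipate any genuine obstacle: the only substantive ingredient is the Ehrhart count $\binom{t-1}{i}$ for an open unimodular $i$-simplex having the origin as a vertex, and both the unimodularity (already cited in Lemma \ref{simplex}) and Ehrhart reciprocity are standard tools available from \cite{br}. The main thing to get right is simply that every simplex $\sigma^\circ \in S$, regardless of its dimension $i$, is unimodular as an $i$-dimensional simplex in its affine span, which is immediate once one recalls that any linearly independent subset of $\Phi^+$ is a $\mathbb{Z}$-basis of the sublattice it spans.
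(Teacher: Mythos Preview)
Your proposal is correct and follows essentially the same argument as the paper: both compute $L_{\mathcal{P}(G)^\circ}(t)=\sum_i f_i\binom{t-1}{i}$ using unimodularity of simplices with vertices in $\Phi^+\cup\{0\}$, and then invoke linear independence of the binomial polynomials to conclude that the $f_i$ are determined by $\mathcal{P}(G)$. The only cosmetic difference is that the paper cites the open-simplex count $L_{\Delta^\circ}(t)=\binom{t-1}{i}$ directly from \cite[Theorem~2.2]{br}, whereas you derive it from the closed-simplex count via Ehrhart--Macdonald reciprocity.
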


\proof
Since   $ \displaystyle \mathcal{P}(G)^\circ= \bigsqcup _{\sigma^\circ \in S} \sigma^\circ$, we have that  
$ \displaystyle  L_{\mathcal{P}(G)^\circ}(t)=\sum _{\sigma^\circ \in S} L_{\sigma^\circ}(t)$. Since the vectors in $\Phi^+$ are unimodular, it follows that for a $d$-dimensional simplex $\sigma^\circ \in S$,  $ L_{\sigma^\circ}(t)=L_{\Delta^\circ}(t)$, where $\Delta$ is the standard $d$-simplex. By \cite[Theorem 2.2]{br} $L_{\Delta^\circ}(t)= {t-1 \choose d}.$
Thus,  
$$ L_{\mathcal{P}(G)^\circ}(t)=\sum_{i=0}^{\infty} f_i  {t-1 \choose i},$$ where $ L_{\mathcal{P}(G)^\circ}(t)\in \mathbb{Z}[t]$ and the set $\{ {t-1 \choose i} \mid i=0, 1, \ldots \}$ is a basis of $\mathbb{Z}[t]$. Therefore, $f_i$  are uniquely determined for  $i=0, 1, \ldots $, by  $\mathcal{P}(G)$ and are  independent of $S$. \qed

\begin{theorem} \label{thm_forest}
Let $F$ be any forest on the vertex set $[n+1]$ with $l$ edges. If $\mathcal{T}_F^\mathcal{S}$ is an $\mathcal{S}$-reduction tree with root labeled $F$, then the number of leaves of $\mathcal{T}_F^\mathcal{S}$ labeled by forests with $k$ edges,  denoted by $f_{F, k}$, is a function of $F$ and $k$ only.

In other words, 
if $P^\mathcal{S}_n$ is a reduced form of $m^\mathcal{S}[F]$, then $$P^\mathcal{S}_n(x_{ij}=1)=\sum_{l=0}^{l-1}f_{F, l-m} \beta^m.$$

\end{theorem}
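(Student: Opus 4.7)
The plan is to translate the $\mathcal{S}$-reduction tree $\mathcal{T}_F^\mathcal{S}$ into a geometric decomposition of the root polytope $\mathcal{P}(F)$ and then invoke Lemma~\ref{const}. Concretely, I would first show by induction on the depth of $\mathcal{T}_F^\mathcal{S}$ that
\[
\mathcal{P}(F)^\circ \;=\; \bigsqcup_{G\text{ leaf of }\mathcal{T}_F^\mathcal{S}} \mathcal{P}(G)^\circ,
\]
where $\mathcal{P}(H)^\circ$ denotes the relative interior of $\mathcal{P}(H)$. The inductive step is immediate from the Reduction Lemma: at an internal node with reduction $G_0 \to G_1, G_2, G_3$, the $d$-dimensional polytopes $\mathcal{P}(G_1)$ and $\mathcal{P}(G_2)$ cover $\mathcal{P}(G_0)$ and meet in the common facet $\mathcal{P}(G_3)$ of codimension one, so the three relative interiors partition $\mathcal{P}(G_0)^\circ$.

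Next, each leaf $G$ is alternating (no further reduction is possible) and lies in $\mathcal{L}_n$ by repeated use of the Reduction Lemma, so Lemma~\ref{simplex} tells us $\mathcal{P}(G)$ is a simplex of dimension $|E(G)|$ whose vertices are the origin together with positive roots in $\Phi^+$. The decomposition above therefore qualifies for Lemma~\ref{const}, and the number of $k$-dimensional open simplices in it---equivalently, the number $f_{F,k}$ of leaves of $\mathcal{T}_F^\mathcal{S}$ with exactly $k$ edges---is determined by $\mathcal{P}(F)$ alone, hence by $F$ and $k$ alone.

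For the reduced-form version, I would trace how $\beta$ accumulates along the tree: the reduction rule $x_{ij}x_{jk}\to x_{ik}x_{ij}+x_{jk}x_{ik}+\beta x_{ik}$ preserves the edge count in the first two descendant graphs but drops one edge---and contributes one factor of $\beta$---on the third. By induction, the monomial read off from a leaf with $k$ edges therefore carries coefficient $\beta^{l-k}$ in the reduced form, and specializing $x_{ij}=1$ gives
\[
P_n^\mathcal{S}(x_{ij}=1) \;=\; \sum_{k} f_{F,k}\,\beta^{l-k} \;=\; \sum_{m=0}^{l-1} f_{F,\,l-m}\,\beta^m,
\]
as claimed.

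The main obstacle is establishing the disjointness assertion in the first step cleanly: one must verify that the relative interior of the cutting facet $\mathcal{P}(G_3)$ sits inside $\mathcal{P}(G_0)^\circ$ (so it is not shared with the boundary of $\mathcal{P}(G_0)$) and is disjoint from both $\mathcal{P}(G_1)^\circ$ and $\mathcal{P}(G_2)^\circ$. This follows once one tracks the cutting hyperplane $\{[\v(f_1)]v = [\v(f_2)]v\}$ from the proof of the Cone Reduction Lemma, but it is the only substantive geometric input beyond the Reduction Lemma itself.
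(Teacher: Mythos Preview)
Your proposal is correct and follows essentially the same route as the paper: repeated application of the Reduction Lemma yields the disjoint decomposition $\mathcal{P}(F)^\circ=\bigsqcup_{G\text{ leaf}}\mathcal{P}(G)^\circ$, Lemma~\ref{simplex} identifies each leaf polytope as a simplex, and Lemma~\ref{const} then forces the leaf counts $f_{F,k}$ to depend only on $F$. Your write-up is in fact more explicit than the paper's at two points the paper leaves to the reader---the inductive verification that the three relative interiors at each node partition $\mathcal{P}(G_0)^\circ$, and the bookkeeping that a leaf with $k$ edges contributes $\beta^{l-k}$ to the reduced form---so there is no gap.
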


\proof Let $\mathcal{T}_F^\mathcal{S}$  be a particular $\mathcal{S}$-reduction tree with root labeled $F$. By definition,  the leaves of $\mathcal{T}_F^\mathcal{S}$  are labeled by alternating forests with $k$ edges, where $k \in [l]$.  Let the $c_k$ forests $F^k_1, \ldots, F^k_{c_k}$ label the  leaves  of $\mathcal{T}_F^\mathcal{S}$ with $k$ edges, $k \in [l]$. Repeated use of the Reduction Lemma (Lemma \ref{reduction_lemma}) implies that \begin{equation} \label{open} \mathcal{P}(F)^\circ=\bigsqcup_{k \in [l], i_k \in [c_k]}\mathcal{P}(F^k_{i_k})^\circ,\end{equation} where the right hand side is a disjoint union of simplices by Lemma \ref{simplex}. By Lemma \ref{const}, the number of $k$-dimensional simplices among 
 $\bigcup_{k \in [l], i_k \in [c_k]}\{\mathcal{P}(F^k_{i_k})^\circ\}$ is independent of  the particular $\mathcal{S}$-reduction tree $\mathcal{T}_F^\mathcal{S}$. Thus, $f_{T, k}=c_k$  only depends on $F$ and $k$.

The formula for the reduced form of $m^\mathcal{S}[F]$ evaluated at $x_{ij}=1$ follows from the correspondence between the leaves of $\mathcal{T}_F^\mathcal{S}$  and reduced forms described in Section \ref{sec:red}.

\qed

We  easily obtain the Ehrhart polynomial, and thus also the volume of the polytope ${\mathcal{P}(F) }$ with the techniques used above.

  \begin{theorem} \label{ehr_forest}   The Ehrhart polynomial of the polytope ${\mathcal{P}(F) }$, where $F$ is  a forest on the vertex set $[n+1]$ with $l$ edges, is 
 $$ L_{\mathcal{P}(F) }(t)= (-1)^l\sum_{i=0}^{l} (-1)^i f_{F, i} {t+i \choose i},$$
 
\noindent  where $f_{F, k}$ is the number of  leaves of $\mathcal{T}_F^\mathcal{S}$ labeled by forests with $k$ edges.  
 \end{theorem}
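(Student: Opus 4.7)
The plan is to combine the interior decomposition established in the proof of Theorem \ref{thm_forest} with Ehrhart reciprocity to extract the Ehrhart polynomial of the closed polytope $\mathcal{P}(F)$.

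First, I would fix an $\mathcal{S}$-reduction tree $\mathcal{T}_F^\mathcal{S}$ with root labeled by $F$, and let $F_{i_k}^k$ ($i_k \in [c_k]$) be the leaves labeled by alternating forests with $k$ edges, so that $c_k = f_{F,k}$ by Theorem \ref{thm_forest}. Repeated application of the Reduction Lemma (Lemma \ref{reduction_lemma}) gives the disjoint decomposition of the relative interior
$$\mathcal{P}(F)^{\circ} = \bigsqcup_{k,i_k} \mathcal{P}(F_{i_k}^k)^{\circ},$$
exactly as in equation (\ref{open}). Hence the interior Ehrhart counting function decomposes as
$$L_{\mathcal{P}(F)^\circ}(t) = \sum_{k=0}^{l} \sum_{i_k=1}^{f_{F,k}} L_{\mathcal{P}(F_{i_k}^k)^\circ}(t).$$

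Next I would evaluate each summand. By Lemma \ref{simplex}, each $\mathcal{P}(F_{i_k}^k)$ is a $k$-dimensional simplex, and its edge vectors form a unimodular subset of $\Phi^+$, so $\mathcal{P}(F_{i_k}^k)$ is lattice-equivalent to the standard $k$-simplex. Therefore $L_{\mathcal{P}(F_{i_k}^k)^\circ}(t) = L_{\Delta_k^\circ}(t) = \binom{t-1}{k}$ by \cite[Theorem 2.2]{br}, giving
$$L_{\mathcal{P}(F)^\circ}(t) = \sum_{k=0}^{l} f_{F,k} \binom{t-1}{k}.$$

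Finally, since $F$ is a forest with $l$ edges the polytope $\mathcal{P}(F)$ is an $l$-dimensional lattice polytope (with respect to its affine lattice), so Ehrhart-Macdonald reciprocity gives
$$L_{\mathcal{P}(F)}(t) = (-1)^{l} L_{\mathcal{P}(F)^{\circ}}(-t) = (-1)^{l}\sum_{k=0}^{l} f_{F,k} \binom{-t-1}{k}.$$
Applying the identity $\binom{-t-1}{k} = (-1)^k \binom{t+k}{k}$ converts this to the claimed formula
$$L_{\mathcal{P}(F)}(t) = (-1)^{l} \sum_{i=0}^{l} (-1)^{i} f_{F,i} \binom{t+i}{i}.$$
The only step that needs some care is the unimodularity claim justifying $L_{\sigma^\circ}(t) = \binom{t-1}{k}$ for each open cell; everything else is a direct consequence of Theorem \ref{thm_forest} and Ehrhart reciprocity.
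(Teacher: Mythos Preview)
Your proof is correct and follows essentially the same approach as the paper: decompose $\mathcal{P}(F)^\circ$ into open unimodular simplices via the Reduction Lemma, use $L_{\Delta_k^\circ}(t)=\binom{t-1}{k}$ to obtain $L_{\mathcal{P}(F)^\circ}(t)=\sum_i f_{F,i}\binom{t-1}{i}$, and then apply Ehrhart--Macdonald reciprocity together with $\binom{-t-1}{i}=(-1)^i\binom{t+i}{i}$. The paper's own proof is virtually identical, just citing Lemma~\ref{const} and Theorem~\ref{thm_forest} for the interior formula rather than spelling out the decomposition again.
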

 
 \begin{proof}
It follows from the proofs of Lemma \ref{const} and Theorem \ref{thm_forest} that  $$ L_{\mathcal{P}(F)^\circ }(t)=  \sum_{i=0}^{l} f_{F, i} {t-1 \choose i}.$$ Since by the Ehrhart-Macdonald reciprocity \cite[Theorem 4.1]{br} $$L_{\mathcal{P}(F)}(t)=(-1)^{\dim \mathcal{P}(F)} L_{\mathcal{P}(F)^\circ}(-t),$$
 it follows that $$L_{\mathcal{P}(F)}(t)=(-1)^l \sum_{i=0}^{l} f_{F, i} {-t-1 \choose i}=(-1)^l\sum_{i=0}^{l} (-1)^i f_{F, i} {t+i \choose i}.$$
 \end{proof}

\begin{corollary} \label{vol_forest}
If $F$ is  a forest on the vertex set $[n+1]$ with $l$ edges, then $$\mbox{\rm vol}\,  \mathcal{P}(F) =\frac{f_{F, l}}{l!}.$$
\end{corollary}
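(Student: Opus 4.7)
The plan is to read off the leading coefficient of the Ehrhart polynomial already computed in Theorem \ref{ehr_forest}, invoking the standard fact from Ehrhart theory that for a $d$-dimensional lattice polytope $\mathcal{P}$, the coefficient of $t^d$ in $L_{\mathcal{P}}(t)$ equals the relative volume $\mbox{vol}\,\mathcal{P}$. First I would verify that $\dim \mathcal{P}(F) = l$: since $F$ is a forest with $l$ edges, the vectors $\{e_i - e_j : (i,j) \in E(F),\ i<j\}$ are linearly independent, and $\mathcal{P}(F)$ contains these vectors together with the origin, so its affine span has dimension exactly $l$.

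Next I would extract the leading coefficient from the formula
\[
L_{\mathcal{P}(F)}(t) = (-1)^l \sum_{i=0}^{l} (-1)^i f_{F,i} \binom{t+i}{i}.
\]
Each $\binom{t+i}{i} = \tfrac{(t+1)(t+2)\cdots(t+i)}{i!}$ is a polynomial of degree $i$ in $t$ with leading coefficient $1/i!$, so only the $i=l$ term contributes to the coefficient of $t^l$. That contribution is $(-1)^l \cdot (-1)^l \cdot f_{F,l} \cdot \tfrac{1}{l!} = \tfrac{f_{F,l}}{l!}$, giving $\mbox{vol}\,\mathcal{P}(F) = f_{F,l}/l!$.

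There is no real obstacle here; the corollary is essentially immediate from Theorem \ref{ehr_forest}. As an alternative route avoiding Ehrhart reciprocity, one can go directly through the decomposition (\ref{open}) in the proof of Theorem \ref{thm_forest}: the pieces $\mathcal{P}(F_{i_k}^k)^\circ$ with $k<l$ are of dimension less than $l$ and so have $l$-dimensional measure zero, while each of the $f_{F,l}$ top-dimensional pieces $\mathcal{P}(F_{i_l}^l)$ is an alternating acyclic simplex of normalized volume $\tfrac{1}{l!}$ by Lemma \ref{simplex}. Summing these contributions yields the same result $\mbox{vol}\,\mathcal{P}(F) = f_{F,l}/l!$.
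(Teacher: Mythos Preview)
Your proposal is correct and matches the paper's own proof essentially verbatim: the paper invokes the fact that the leading coefficient of $L_{\mathcal{P}(F)}(t)$ equals the volume, and then remarks that the same result follows directly from the Reduction Lemma by counting the $l$-dimensional simplices in the triangulation --- exactly the two routes you outline.
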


\proof By \cite[Lemma 3.19]{br} the leading coefficient of $L_{\mathcal{P}(F)}(t)$ is equal to $\mbox{\rm vol} \, \mathcal{P}(F)$. We   also   obtain $\mbox{\rm vol}\, \mathcal{P}(F) =\frac{f_{F, l}}{l!}$ directly from the Reduction Lemma if we count the $l$-dimensional simplices in the triangulation of $ \mathcal{P}(F) $.

\qed

\section{Reductions in the noncommutative case}
\label{reductionsB}

 In this section we prove  two crucial lemmas about  reduction (\ref{red}) in the noncommutative case necessary for proving Theorem \ref{thm1}. While in the commutative case   reductions on $G^\mathcal{S}[m]$ could result in crossing graphs, we prove that in the noncommutative case exactly those reductions from the commutative case are allowed which result in no crossing graphs, provided that $m=m^\mathcal{B}[T]$ for a noncrossing tree $T$ with suitable edge labels specified below. Furthermore, we also show that if there are any two edges
 $(i, j)$ and $(j, k)$ with $i<j<k$  in a successor of $G^\mathcal{B}[m]$, then after suitably many commutations it is possible to apply reduction (\ref{red}). Thus, once the reduction  process terminates, the set of graphs obtained as leaves of the reduction tree are  alternating forests. Now, unlike in the commutative case, they are also noncrossing. In fact,     each noncrossing alternating spanning forest of $\overline T$   satisfying certain additional technical conditions  occurs among the leaves of the reduction tree exactly once, yielding a complete combinatorial description of the reduced form of $m^\mathcal{B}[T]$.

In terms of graphs the partial  commutativity   means that if $G$ contains two edges $(i, j)_a$ and $(k, l)_{a+1}$ with $i, j, k, l$ distinct, then we can replace these edges by $(i, j)_{a+1}$ and $(k, l)_{a}$, and vice versa. Reduction rule (\ref{red}) on the other hand means that if there are two edges $(i, j)_a$ and $(j, k)_{a+1}$ in $G_0$, $i<j<k$,  then we replace $G_0$ with three graphs $G_1, G_2, G_3$ on the vertex set $[n+1]$ and edge sets 

\begin{eqnarray} \label{graphs2}
E(G_1)&=&E(G_0)\backslash \{(i, j)_a\}\backslash \{(j, k)_{a+1}\}\cup \{(i, k)_a\}\cup \{(i, j)_{a+1}\} \nonumber \\
E(G_2)&=&E(G_0)\backslash \{(i, j)_a\}\backslash \{(j, k)_{a+1}\}\cup \{(j, k)_a\}\cup \{(i, k)_{a+1}\}  \nonumber \\E(G_3)&=&(E(G_0)\backslash \{(i, j)_a\}\backslash \{(j, k)_{a+1}\})^a\cup \{(i, k)_a\},
\end{eqnarray}

 \noindent where $(E(G_0)\backslash \{(i, j)_a\}\backslash \{(j, k)_{a+1}\})^a$ denotes the edges obtained from the edges $E(G_0)\backslash \{(i, j)_a\} \backslash \{(j, k)_{a+1}\}$ by reducing the label of each edge which has label greater than $a$ by $1$.

A \textbf{$\mathcal{B}$-reduction tree}  $\mathcal{T}^\mathcal{B}$ is defined analogously to an $\mathcal{S}$-reduction tree, except we use equation (\ref{graphs2}) to describe the children. See Figure \ref{fig} for an example.
 A graph $H $ is called a \textbf{$\mathcal{B}$-successor} of $G$ if it is obtained by a series of reductions from $G$.   For convenience, we refer to commutativity of $x_{ij}$ and $x_{kl}$ for distinct $i, j, k, l$  as \textbf{reduction (2)}, by which we mean the rule $x_{ij} x_{kl} \leftrightarrow x_{kl}x_{ij}$, for $i, j, k, l$ distinct, or, in the language of graphs, exchanging edges  $(i, j)_a$ and $(k, l)_{a+1}$ with  $(i, j)_{a+1}$ and $(k, l)_{a}$ for $i, j, k, l$  distinct.


A forest  $H$ on the vertex set $[n+1]$ and $k$ edges labeled   $1, \ldots, k$  is \textbf{good} if it satisfies the following conditions:

 $(i)$ If edges $(i, j)_a$ and $(j, k)_b$ are in $H$, $i<j<k$, then $a<b$.

$(ii)$ If edges $(i, j)_a$ and $(i, k)_b$ in $H$ are such that $j < k$, then $a>b$.

$(iii)$  If edges $(i, j)_a$ and $(k, j)_b$ in $H$ are such that $i < k$, then $a>b$.

$(iv)$ $H$ is noncrossing. 

No  graph  $H$ with a  cycle could satisfy all of $(i), (ii), (iii), (iv)$ simultaneously, which is why we only define good forests. Note, however, that any forest $H$ has an edge-labeling that makes it a good forest. 

\begin{lemma} \label{huh}
If the root of an $\mathcal{B}$-reduction  tree is labeled by   a good forest, then all nodes of it  are  also labeled by  good forests.  
\end{lemma}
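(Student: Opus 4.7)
The plan is to show that each elementary move from which any node of $\mathcal{T}^\mathcal{B}$ is built preserves the good-forest property, and then to induct on the length of the sequence of moves reaching a given node from the root. The two kinds of moves are reduction (2), which swaps the labels $a$ and $a+1$ of two vertex-disjoint edges, and the proper reduction (\ref{red}) of equations (\ref{graphs2}), which replaces $G_0$ by one of the three children $G_1,G_2,G_3$. Thus it suffices to verify each of (i)--(iv) for a single-step successor of a good forest.

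For reduction (2) the argument is quick. The only edges whose labels change are the two swapped edges $e_1=(i,j)_a$ and $e_2=(k,l)_{a+1}$, which are vertex-disjoint, so none of (i)--(iii) concerns the pair $(e_1,e_2)$ itself. For any third edge with label $b$, the distinctness of labels and $b\notin\{a,a+1\}$ forces $b\le a-1$ or $b\ge a+2$; in both ranges $b<a$ is equivalent to $b<a+1$, and $b>a$ is equivalent to $b>a+1$, so every instance of (i), (ii), or (iii) pairing $e_1$ or $e_2$ with the third edge keeps its truth value. Condition (iv) and the forest property depend only on the unlabeled graph and so are trivially preserved.

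For the proper reduction the main obstacle is (iv): each of $G_1,G_2,G_3$ contains the new edge $(i,k)$, and I must show it crosses no surviving edge. Suppose $(p,q)$ with $p<q$ crosses $(i,k)$, so either $i<p<k<q$ or $p<i<q<k$. If $\{p,q\}$ avoids $j$, a short case check shows that in each configuration one of $(i,j),(j,k)$ already forms a crossing with $(p,q)$ inside $G_0$, contradicting the noncrossingness of $G_0$. If $j\in\{p,q\}$, then $(p,q)$ together with $(i,j)_a$ and $(j,k)_{a+1}$ forces a label contradiction in $G_0$; for example $q=j$ with $p<i$ gives an edge $(p,j)_b$, which by (iii) applied to $(p,j)_b,(i,j)_a$ satisfies $b>a$, and by (i) applied to $(p,j)_b,(j,k)_{a+1}$ satisfies $b<a+1$, impossible. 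That $G_1,G_2,G_3$ are forests is automatic, since $G_0$ is a forest, so removing $(j,k)$ or $(i,j)$ (or both) separates $i$ from $k$ and insertion of $(i,k)$ creates no cycle.

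The remaining task is to verify (i)--(iii) on $G_1,G_2,G_3$ by bookkeeping. The changed edges are localized: in $G_1$ only $(i,j)_{a+1}$ and $(i,k)_a$ are new, in $G_2$ only $(j,k)_a$ and $(i,k)_{a+1}$ are new, and in $G_3$ the new edge is $(i,k)_a$ while every other label exceeding $a$ shifts down by one. Any candidate violation therefore pairs one of the new edges either with its partner (a direct check, for instance $(i,j)_{a+1}$ and $(i,k)_a$ in $G_1$ satisfy (ii) because $a+1>a$) or with a third edge inherited from $G_0$; in every such case the required label inequality is handed down from one of (i)--(iii) applied in $G_0$ to one of the deleted edges $(i,j)_a,(j,k)_{a+1}$ paired with the third edge. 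A few subcases also need the noncrossingness of $G_0$: for instance, a surviving edge $(i,m)$ with $j<m<k$ is ruled out because it would cross $(j,k)$ in $G_0$. For $G_3$ one additionally notes that the label shift is order-preserving on surviving labels, since no surviving label equals $a$ or $a+1$. The obstacle here is not conceptual but enumerative, and in every case the required inequality closes by direct appeal to an inherited good-forest condition on $G_0$.
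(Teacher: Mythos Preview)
Your proof is correct and follows essentially the same approach as the paper's: induction on the sequence of moves, with reduction (2) handled by noting that relative label orders at shared vertices are unchanged, and reduction (\ref{red}) handled by using conditions (i)--(iii) on $G_0$ to rule out edges through $j$ that could cross the new edge $(i,k)$, together with noncrossingness of $G_0$ for the remaining cases. You supply more detail than the paper does---in particular the explicit forest check and the case bookkeeping for (i)--(iii)---but the structure and key observations are the same.
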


  \proof
The root of the $\mathcal{B}$-reduction tree is trivially labeled by a good forest.  We show that after each reduction (\ref{red}) or (2) all properties $(i), (ii), (iii), (iv)$ of good forests are preserved.

In reduction (2) we take disjoint edges  $(i, j)_a$ and $(k, l)_{a+1}$ and replace them by the edges  $(i, j)_{a+1}$ and $(k, l)_a$. It is easy to check that properties  $(i), (ii), (iii), (iv)$ are preserved using the fact that all edge-labels are integers and are not repeated, so the relative orders of edge-labels for edges incident to the same vertex are unchanged.   

Performing  reduction (\ref{red}) results in three new graphs as described by equation (\ref{graphs2}).  It is easy to check that properties $(i), (ii), (iii)$ are preserved using the fact that all edge-labels are integers and are not repeated. To prove that property $(iv)$ is also preserved, note that by $(i), (ii), (iii)$  if edge $(i, j)$ is labeled $a$ and $(j, k)$ labeled $a+1$, then there cannot be edges with endpoint $j$ of the form $(i_1, j)$ with $i_1<i$ or $(j, k_1)$ with $k<k_1$, or else some of the conditions $(i), (ii), (iii) $ would be violated. That there is no edge of the form described in the previous sentence with  endpoint $j$   together with the fact that the graph $G$  we applied reduction (\ref{red}) to was noncrossing  implies that  edge $(i, k)$ does not cross any edges of $G$, and therefore the resulting graph is also noncrossing.

\qed

A reduction applied to a noncrossing graph $G$  is \textbf{noncrossing} if the graphs resulting from the reduction are also noncrossing.

The following is then an immediate corollary of Lemma \ref{huh}.

\begin{corollary}
If $G$ is a  good forest, then all reductions that can be applied to $G$ and its $\mathcal{B}$-successors   are  noncrossing.
\end{corollary}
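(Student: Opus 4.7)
The plan is to deduce this corollary directly from Lemma \ref{huh} together with property $(iv)$ in the definition of a good forest. Since the statement concerns both $G$ itself and every $\mathcal{B}$-successor of $G$, the natural move is to root a $\mathcal{B}$-reduction tree at $G$, apply Lemma \ref{huh} to conclude that every node of this tree is a good forest, and then observe that ``noncrossing'' is already baked into property $(iv)$.

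More precisely, I would argue as follows. Let $H$ be either $G$ itself or any $\mathcal{B}$-successor of $G$, and suppose a reduction (either reduction (\ref{red}) or reduction (2)) is applied to $H$, producing children $H_1, H_2$ (in the case of reduction (2), two copies obtained from swapping edge-labels) or $H_1, H_2, H_3$ (in the case of reduction (\ref{red}), via equation (\ref{graphs2})). Each of these children is by construction a node of the $\mathcal{B}$-reduction tree whose root is $G$. Since the root is a good forest, Lemma \ref{huh} applies and tells us that every node of this tree, including each child produced by the reduction, is a good forest. In particular, each child satisfies property $(iv)$, so each child is noncrossing.

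By the definition of a noncrossing reduction given just before the corollary, this says exactly that the reduction is noncrossing. Since $H$ was an arbitrary element of $\{G\} \cup \{\text{$\mathcal{B}$-successors of }G\}$ and the reduction applied to $H$ was arbitrary, the corollary follows. There is no real obstacle here: the content of the statement is already contained in Lemma \ref{huh}, and the only thing to check is that the notion of a ``noncrossing reduction'' is precisely the one captured by property $(iv)$ of the children. Hence the proof is a one-line deduction from Lemma \ref{huh}.
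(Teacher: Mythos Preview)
Your proposal is correct and is precisely the paper's own argument: the corollary is stated as an immediate consequence of Lemma \ref{huh}, and you have spelled out exactly that deduction (good forests satisfy property $(iv)$, hence every child produced by a reduction is noncrossing, hence the reduction is noncrossing). No further work is needed.
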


\begin{lemma} \label{cross}
Let $G$ be a   good forest.  Let $(i, j)_a$ and $(j, k)_b$ with $i<j<k$ be edges of $G$ such that no edge of $G$ crosses $(i, k)$. Then after finitely many applications of reduction (2) we can apply  reduction (\ref{red}) to edges  $(i, j)$ and $(j, k)$.
\end{lemma}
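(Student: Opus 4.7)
The plan is to show that finitely many reductions (2) can bring the labels $a$ and $b$ to consecutive values $a, a+1$, at which point reduction (\ref{red}) applies by definition.

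First, I observe that the hypothesis forces vertex $j$ to have degree exactly $2$ in $G$, with incident edges precisely $(i,j)$ and $(j,k)$. Indeed, any other edge $(p,j)$ with $p \neq i$ would satisfy $p<i$ or $i<p<j$, and in either case $(p,j)$ crosses $(i,k)$; symmetrically, no edge $(j,q)$ with $q \neq k$ can occur. Thus $j$ acts as a bottleneck decoupling the induced subgraph of $G$ on $[i,j]$ from the one on $[j,k]$.

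Next, I interpret good labelings as linear extensions. The constraints (i), (ii), (iii) directly order each pair of edges of $G$ sharing a vertex, and these direct orderings generate a partial order $\prec$ on $E(G)$; a labeling is good iff it is a linear extension of $\prec$. Moreover, if two edges $e_1, e_2$ carrying labels $c, c+1$ were comparable in $\prec$ via a chain of length $t \geq 2$, then the $t-1$ intermediate chain edges would need $t-1$ distinct integer labels strictly between $c$ and $c+1$, which is impossible; hence adjacent-in-label comparable edges must share a vertex. Consequently, reduction (2) is valid between two adjacent-in-label edges precisely when they are incomparable in $\prec$, i.e.\ it is exactly the elementary transposition of adjacent incomparable elements in a linear extension.

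By the standard fact that the graph of linear extensions of a finite poset is connected under such transpositions, it suffices to exhibit a good labeling of $G$ in which $(i,j)$ and $(j,k)$ occupy consecutive labels. For this I will verify that no edge $f$ of $G$ satisfies $(i,j) \prec f \prec (j,k)$. The upper covers of $(i,j)$ in $\prec$ are $(j,k)$ itself and the edges $(i,m)$ with $i<m<j$; any chain of direct relations extended upward from such an $(i,m)$ stays inside the induced subgraph on $[i,j]$, because the only edge at $j$ above $(i,j)$ is $(j,k)$ and, by the noncrossing hypothesis, no edge of $G$ has one endpoint in $(i,j)$ and the other in $(j,k)$. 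A symmetric argument confines every chain below $(j,k)$ to the subgraph on $[j,k]$. Therefore $(j,k)$ covers $(i,j)$ in $\prec$, so a linear extension with $(i,j)$ immediately followed by $(j,k)$ exists, and reduction (\ref{red}) can be applied. The main technical obstacle is this last step: carefully ruling out transitive chains between $(i,j)$ and $(j,k)$ by exploiting the degree-$2$ bottleneck at $j$ together with the noncrossing hypothesis.
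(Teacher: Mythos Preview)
Your linear-extension framework is a nice conceptual reformulation, but Step~1 is false as stated, and Step~5 leans on it.

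\textbf{The gap.} You claim that the hypothesis ``no edge of $G$ crosses $(i,k)$'' forces $j$ to have degree exactly $2$. This is not so: an edge $(p,j)$ with $i<p<j$ is \emph{nested} inside $(i,k)$, not crossing it. For a concrete counterexample, take $G=\{(2,3)_1,(1,3)_2,(3,5)_3\}$ on $[5]$ with $(i,j,k)=(1,3,5)$; then $j=3$ has degree~$3$ and $(2,3)$ does not cross $(1,5)$. Symmetrically, edges $(j,q)$ with $j<q<k$ may exist. Your Step~5 then asserts ``the only edge at $j$ above $(i,j)$ is $(j,k)$,'' which is also false in general: any such $(j,q)$ is above $(i,j)$ by~(i).

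\textbf{The repair.} Your target claim---that $(j,k)$ covers $(i,j)$ in $\prec$---is nevertheless true, but the reason is that the extra edges at $j$ land on the \emph{wrong side} of the interval. By~(iii), any $(p,j)$ with $i<p<j$ satisfies $(p,j)\prec(i,j)$; by~(ii), any $(j,q)$ with $j<q<k$ satisfies $(j,k)\prec(j,q)$. So neither type lies strictly between $(i,j)$ and $(j,k)$. Combined with your (correct) observation that no edge has one endpoint in the open interval $(i,j)$ and the other in $(j,k)$ (this uses condition~(iv), noncrossingness of $G$ itself, applied to $(i,j)$ and $(j,k)$), one can show that an upward chain from any $(i,m)$ with $m<j$ stays in edges with both endpoints in $[i,j)$, and a downward chain into $(j,k)$ must come from edges with both endpoints in $(j,k]$; these regions are disjoint, so no intermediate $f$ exists. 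With this correction your argument goes through.

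\textbf{Comparison with the paper.} The paper argues directly: it classifies every edge with label in $(a,b)$ as either disjoint from both $(i,j),(j,k)$, or of the form $(i,\cdot)$ (hence disjoint from $(j,k)$), or of the form $(\cdot,k)$ (hence disjoint from $(i,j)$), and then bubbles such edges outward to shrink $b-a$. Your approach packages the same combinatorics into the poset language and appeals to connectivity of the linear-extension graph, which is cleaner once the covering relation is properly established.
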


\begin{proof}
By the definition of a good forest it follows that $a<b$. If $b=a+1$, then we are done. Otherwise, consider all edges $(l, m)_c$ such that $a<c<b$. Since $G$ is a good forest and  $(i, k)$ does not cross any edges of $G$, we find that for any such edge $(l, m)_c$ is  either  disjoint from edges $(i, j)_a$ and $(j, k)_b$, or else  $(l, m)_c=(i, m)_c$ or  $(l, m)_c=(l, k)_c$. Then reduction (2) can be applied to the edges    $(l, m)_c$ with $a<c<b$ until  either the edges labeled $a$ and $a+1$ or the edges labeled $b-1$ and $b$ are disjoint, in which case we can perform reduction (2) on these edges. Once this is done, the difference between the labels of the edges 
 $(i, j)$ and $(j, k)$ decreased, and we can repeat this process until this difference is   $1$, in which case reduction (\ref{red}) can be applied to them.  
\end{proof}

\begin{corollary} \label{NA}
If $F$ labels a leaf of a $\mathcal{B}$-reduction tree whose  root    is labeled by a    good forest, then  $F$ is a good noncrossing  alternating forest. 
\end{corollary}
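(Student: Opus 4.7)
The plan is to reduce the statement to Lemmas \ref{huh} and \ref{cross}: the former gives us ``good'' (hence noncrossing) for free along the entire reduction tree, and the latter is exactly what we need to rule out any non-alternating pair at a leaf.

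First, I would invoke Lemma \ref{huh}: since the root is a good forest, every node of the $\mathcal{B}$-reduction tree, including the leaf labeled by $F$, is itself a good forest. In particular $F$ is noncrossing and is a forest; the only remaining condition needed for the conclusion is that $F$ is alternating. So the whole task becomes: show that a leaf of the reduction tree cannot contain two edges of the form $(i,j)_a$ and $(j,k)_b$ with $i<j<k$.

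Suppose for contradiction that such a pair exists in $F$. I want to apply Lemma \ref{cross} to conclude that some sequence of reductions (2) followed by reduction (\ref{red}) is available at $F$, contradicting the fact that $F$ labels a leaf. The only hypothesis of Lemma \ref{cross} that is not immediate is that no edge of $F$ crosses $(i,k)$. For this I would observe that because $F$ is noncrossing and already contains $(i,j)$ and $(j,k)$, any edge $(p,q)$ of $F$ with $p<i<q<k$ or $i<p<k<q$ would actually have to cross either $(i,j)$ or $(j,k)$: in the case $i<p<k<q$, if $p<j$ then $(p,q)$ crosses $(i,j)$, and if $p\geq j$ then $(p,q)$ crosses $(j,k)$; the other case is symmetric. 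Hence no edge of $F$ crosses $(i,k)$.

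With the hypotheses of Lemma \ref{cross} verified, that lemma hands us a finite sequence of reductions (2) after which reduction (\ref{red}) becomes applicable to the pair $(i,j),(j,k)$. But a leaf of the $\mathcal{B}$-reduction tree is by definition a node to which no further reduction can be applied, a contradiction. Therefore no such pair $(i,j)_a,(j,k)_b$ exists in $F$, and $F$ is alternating. Combined with the good-forest property inherited from Lemma \ref{huh}, we conclude that $F$ is a good noncrossing alternating forest. The only step requiring any thought is the crossing verification in the middle paragraph; everything else is a direct appeal to the two preceding lemmas.
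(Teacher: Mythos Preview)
Your approach is exactly the paper's: invoke Lemma \ref{huh} to get ``good'' (hence noncrossing), then argue by contradiction via Lemma \ref{cross} that a leaf must be alternating. The paper's own proof is in fact terser than yours and does not spell out the verification that no edge of $F$ crosses $(i,k)$.

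There is, however, a small gap in that verification. In the case $i<p<k<q$ you write ``if $p\geq j$ then $(p,q)$ crosses $(j,k)$,'' but when $p=j$ the edge $(j,q)$ shares the vertex $j$ with $(j,k)$ and therefore does \emph{not} cross it; yet $(j,q)$ with $q>k$ does cross $(i,k)$, since $i<j<k<q$. The symmetric issue occurs in the other case when $q=j$, i.e., an edge $(p,j)$ with $p<i$. The fix is to choose the witnessing triple more carefully: among all non-alternating pairs at a fixed middle vertex $j$, take $i$ to be the smallest left-neighbor of $j$ and $k$ the largest right-neighbor. Then the sub-cases $p=j$ and $q=j$ are excluded by the extremality of $k$ and $i$ respectively, and the rest of your case analysis goes through verbatim. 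With this adjustment the proof is complete and coincides with the paper's argument.
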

 
\proof
By Lemma \ref{huh}, $F$ is a good forest. By definition of good,   it is also noncrossing. Lemma \ref{cross} implies that $F$ is alternating, or else reduction (\ref{red}) could be applied to it, and thus it would not label a leaf of a $\mathcal{B}$-reduction tree.
\qed

\section{Proof  of Theorems \ref{vol} and  \ref{thm1} in a special case}
\label{reductionsB1}
  
In this    section we  prove    Theorems \ref{vol} and  \ref{thm1}  for the special case where $T=P=([n+1], \{(i, i+1) \mid i \in [n]\}).$ We prove the general versions of the theorems in Section \ref{sec:gen}. 
 
Given a  noncrossing  alternating forest $F$ on the vertex set $[n+1]$ with $k$ edges, the \textbf{lexicographic order} on its edges is as follows. Edge $(i_1, j_1)$ is less than edge $(i_2, j_2)$ in the  lexicographic  order  if  $j_1>j_2$, or $j_1=j_2$ and $i_1>i_2$. The forest $F$ is said to have \textbf{lexicographic  edge-labels} if its edges are labeled with integers $1, \ldots, k$ such that if edge $(i_1, j_1)$ is less than edge $(i_2, j_2)$ in lexicographic  order, then the label of $(i_1, j_1)$ is less than the label of   $(i_2, j_2)$ in the usual order on the integers. Clearly, given any graph $G$ there is a unique edge-labeling of it which is lexicographic. Note that our definition of lexicographic is closely related to the conventional definition, but it is not exactly the same. For an example of lexicographic  edge-labels, see the graphs labeling the leaves of the $\mathcal{B}$-reduction tree in Figure \ref{fig}.

\begin{lemma} \label{labeling}
If a noncrossing  alternating forest $F$ is a $\mathcal{B}$-successor of  a good forest, then upon some number of reductions (2) performed on $F$,  it is possible to obtain a  noncrossing  alternating forest $F'$ with lexicographic  edge-labels.
\end{lemma}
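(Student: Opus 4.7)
The approach is to recognize that the transformation from the current labeling of $F$ to the lexicographic labeling is purely a relabeling of edges, since reduction (2) acts on labelings by swapping the labels of two vertex-disjoint edges whose labels differ by $1$. So the lemma reduces to the question: does there exist a sequence of such swaps taking the current labeling of $F$ to the lexicographic one?

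First I would check that the lexicographic labeling of $F$ satisfies the good-forest conditions (i)--(iv). Conditions (i) and (iv) are immediate: (i) is vacuous because $F$ is alternating, and (iv) holds because $F$ is noncrossing. For (ii), two edges $(i,j)$ and $(i,k)$ with $j<k$ satisfy ``$(i,j)$ is lex-larger than $(i,k)$'' (same first coordinate, the edge with smaller $j$-coordinate is lex-greater), so $(i,j)$ receives a larger lex label; similarly (iii) holds for two edges $(i,j)$ and $(k,j)$ with $i<k$. On the other hand, by Lemma~\ref{huh} together with Corollary~\ref{NA}, the current labeling of $F$, inherited from being a $\mathcal{B}$-successor of a good forest, is itself good. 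Thus both labelings in question are good labelings of the same underlying noncrossing alternating forest.

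Next I would introduce the partial order $\prec$ on $E(F)$ generated by setting $e \prec e'$ whenever $e$ and $e'$ share a vertex and condition (ii) or (iii) forces $e$'s label to be smaller than $e'$'s. Because $F$ is alternating, at every vertex $v$ the edges incident to $v$ either all have $v$ as their smaller endpoint or all have $v$ as their larger endpoint; in either case (ii) or (iii) totally orders them. Consequently, any two edges sharing a vertex are comparable under $\prec$, so two edges incomparable under $\prec$ must be vertex-disjoint. By the previous paragraph, both the current labeling of $F$ and its lex labeling are linear extensions of $\prec$.

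The proof then concludes by invoking the classical fact that any two linear extensions of a finite poset are connected by a sequence of elementary moves, each swapping two adjacent labels on a pair of elements incomparable in the poset. Applied to $\prec$, each such move swaps two consecutive labels on vertex-disjoint edges, which is exactly reduction (2). Therefore the current labeling of $F$ can be converted to its lexicographic labeling by a finite sequence of reductions (2). The only slightly tricky point in the whole argument is checking that the lex labeling agrees with (ii) and (iii); this is purely a sign check made subtle by the ``reversed'' nature of the lex convention, and is straightforward once the definitions are unpacked.
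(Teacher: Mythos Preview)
Your proposal is correct and follows essentially the same approach as the paper. The paper's proof is very terse: it observes (via Lemma~\ref{huh}) that for any two edges of $F$ sharing a vertex, the current labels already agree with the lexicographic order, and then simply says to ``continue these swaps until the lexicographic order is obtained.'' Your version makes this last step rigorous by recasting it in poset language---both labelings are linear extensions of the partial order $\prec$ determined by the vertex-sharing constraints, incomparable elements are vertex-disjoint, and the connectivity of the linear-extension graph via adjacent transpositions is exactly the classical fact you invoke. This is a clean way to justify the paper's implicit ``keep swapping'' argument.
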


\begin{proof}
If edges  $e_1$ and $e_2$ of $F$   share a vertex and if $e_1$ is less than $e_2$ in the lexicographic order, then the label of $e_1$ is less than the label of $e_2$ in the usual order on integers by Lemma \ref{huh}. Since reduction (2)  swaps the labels of two vertex disjoint edges labeled by consecutive integers in  a graph, these swaps do not affect the relative order of the labels on edges sharing vertices.  Continue these swaps until the lexicographic order is obtained. 
\end{proof}

To avoid confusion about whether the commutative or the noncommutative version of the problem is being considered, we  denote  $x_{12}x_{23}\cdots x_{n,n+1}$  by $w_{\mathcal{S}}$ in the commutative and by $w_{\mathcal{B}}$ in the noncommutative case.

\begin{proposition} \label{non}
By choosing the series of reductions suitably, the set of leaves of a $\mathcal{B}$-reduction tree with root labeled by  $G^\mathcal{B}[w_{\mathcal{B}}]$ can be all noncrossing alternating forests $F$ on the vertex set  $[n+1]$  containing edge $(1, n+1)$ with lexicographic  edge-labels.
\end{proposition}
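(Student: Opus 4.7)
I would proceed by induction on $n$. The base case $n=1$ is immediate: no reduction is possible, and the single leaf $\{(1,2)\}$ is the unique noncrossing alternating forest on $[2]$ containing $(1,2)$. For the inductive step, since the natural labeling makes $G^\mathcal{B}[w_\mathcal{B}]$ a good forest, Lemma \ref{huh} together with Corollary \ref{NA} already guarantee that every leaf of any $\mathcal{B}$-reduction tree is a good noncrossing alternating forest. What remains is to (i) show the edge $(1, n+1)$ is present in every leaf, (ii) produce enough leaves so that every target forest is realized, and (iii) obtain the lexicographic edge-labels.

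The first task rests on the observation that the edge $(1, n+1)$ is itself unreducible: applying (\ref{red}) at $(1, n+1)$ would require an adjacent edge $(v, 1)$ with $v < 1$ or $(n+1, v)$ with $v > n+1$, neither of which exists. Hence once $(1, n+1)$ appears it persists in every descendant. I would then argue that $(1, n+1)$ must eventually appear by tracking a path from $1$ to $n+1$ in the current graph. Initially this is the full path $1, 2, \ldots, n+1$. Under a reduction at consecutive edges $(i,j), (j,k)$ lying along the current path, each of $G_1, G_2, G_3$ contains the new edge $(i,k)$, and together with the surviving edges among $\{(i,j),(j,k)\}$ this allows the path to be rerouted, with strict shortening in the $G_3$ branch. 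Choosing the reduction strategy so that every reduction involves at least one edge of the current path from $1$ to $n+1$, the path length strictly decreases along every branch and must terminate with the single edge $(1, n+1)$. That the leaf cannot contain a longer path from $1$ to $n+1$ follows from the same noncrossing-plus-alternating argument that forces every noncrossing alternating spanning tree of $K_{n+1}$ to contain $(1, n+1)$: a length-three path $1, a, b, n+1$ with $1 < b < a < n+1$ forces the edges $(1,a)$ and $(b, n+1)$ to cross, and longer zig-zag paths from $1$ to $n+1$ in a noncrossing alternating forest are ruled out by iterating this crossing argument.

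For the count, I would project the $\mathcal{B}$-reduction tree to an $\mathcal{S}$-reduction tree by forgetting edge labels. Reduction (2) acts trivially on unlabeled graphs and (\ref{red}) produces the same multiset of unlabeled children in both cases, so the multiset of unlabeled leaves agrees with that of a corresponding $\mathcal{S}$-reduction tree. By Theorem \ref{thm_forest} applied to $P_n$, together with the identification of $f_{P,k}$ with the number of noncrossing alternating spanning forests on $[n+1]$ with $k$ edges containing $(1, n+1)$ recorded in Section \ref{sec:red}, the total number of leaves equals the total number of noncrossing alternating forests on $[n+1]$ containing $(1, n+1)$. The Reduction Lemma further shows that distinct leaves give simplices with disjoint interiors in a central triangulation of $\mathcal{P}(P_n)$, hence correspond to distinct abstract forests. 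Count-matching then forces the leaf-to-forest map to be a bijection, and Lemma \ref{labeling} converts each leaf to its lexicographic labeling.

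The main obstacle I anticipate is the path-rerouting argument in the second paragraph: one must verify that even after the many applications of reduction (2) needed to bring a chosen reducible pair to consecutive labels, the chosen edges of the current $1$-to-$(n+1)$ path still line up correctly, and that the strategy can be made uniform across all branches of the tree. A cleaner formulation may be to induct jointly on the number of edges and on a recursively defined reduction schedule that peels off vertices from the path one at a time, but the high-level structure above should go through.
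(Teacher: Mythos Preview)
Your outline has the right shape in several places---leaves are noncrossing alternating by Corollary \ref{NA}, the presence of $(1,n+1)$ can indeed be argued by tracking an increasing $1$--$(n+1)$ path, distinctness of leaves follows from the Reduction Lemma, and Lemma \ref{labeling} handles the lexicographic labeling. The difficulty is your step (ii).

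The counting argument is circular. You invoke ``the identification of $f_{P,k}$ with the number of noncrossing alternating spanning forests on $[n+1]$ with $k$ edges containing $(1,n+1)$ recorded in Section \ref{sec:red}.'' That passage in Section \ref{sec:red} is an announcement, not a proof; the identification is established only \emph{after} Proposition \ref{non}, via Theorem \ref{ajaj} and the unnumbered proposition following Theorem \ref{main}, both of which rely on Proposition \ref{non} itself. Theorem \ref{thm_forest} alone gives you that $f_{P,k}$ is well-defined, but says nothing about what it equals. So at this point in the paper you have no independent access to the total leaf count, and the bijection-by-counting collapses. Your declared induction on $n$ does not rescue this either: the inductive hypothesis would give you $f_{P_{n-1},k}$, not $f_{P_n,k}$, and you have not supplied a recursion linking the two sides.

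The paper's proof avoids counting entirely and does exactly what you sketch in your final paragraph: it \emph{constructs} a specific $\mathcal{B}$-reduction tree by induction on $n$. One first reduces only the subpath $(1,2),\ldots,(n-1,n)$, leaving the edge $(n,n+1)_n$ untouched; by the inductive hypothesis this produces every noncrossing alternating forest on $[n]$ containing $(1,n)$ with lexicographic labels, each augmented by $(n,n+1)_n$. From each such intermediate graph one then checks directly that the remaining reductions generate every target forest on $[n+1]$. This constructive step is precisely the missing content in your argument; once you have it, the counting match becomes a consequence rather than an input.
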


\begin{proof}
By Corollary \ref{NA}, all leaves of a $\mathcal{B}$-reduction tree are noncrossing alternating forests   on the vertex set  $[n+1]$. It is  easily seen that they all   contain edge $(1, n+1)$.
By the correspondence between the leaves of a $\mathcal{B}$-reduction tree    and simplices in a subdivision of  $\mathcal{P}(G^\mathcal{B}[w_{\mathcal{B}}])$ obtained from the Reduction Lemma (Lemma \ref{reduction_lemma}), it follows that no forest appears more than once among the leaves. Thus,   it suffices to prove that any  noncrossing alternating forest $F$ on the vertex set $[n+1]$ containing edge $(1, n+1)$ appears among the leaves of a $\mathcal{B}$-reduction tree and that all these forests have lexicographic  edge-labels. One can construct such a   $\mathcal{B}$-reduction tree by induction on $n$. 
 We  show that starting with the path $(1, 2), \ldots, (n, n+1)$ and performing reductions (1) and (2) we can obtain any  noncrossing alternating forest $F$ on the vertex set $[n+1]$  containing edge $(1, n+1)$ with lexicographic edge-labels.

 First perform the reductions on the path  $(1, 2), \ldots, (n, n+1)$ without involving edge $(n, n+1)$ in any of the reductions, until possible. Then we arrive to a set of trees where we have a   noncrossing alternating forest $F$ on the vertex set $[n]$  containing edge $(1, n)$ with lexicographic labeling and in addition edge $(n, n+1)_n$. By inspection  it follows that any  noncrossing alternating forest $F$ on the vertex set $[n+1]$  containing edge $(1, n+1)$ with lexicographic edge-labels can be obtained from them.
\end{proof}

\begin{theorem} \label{ajaj}
The set of leaves of a $\mathcal{B}$-reduction tree  with root labeled by  $G^\mathcal{B}[w_{\mathcal{B}}]$ is, up to applications of reduction (2),  the set of all  noncrossing alternating forests with lexicographic  edge-labels  on the vertex set $[n+1]$ containing edge $(1, n+1)$.
\end{theorem}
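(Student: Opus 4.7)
\textit{Plan.} Fix any $\mathcal{B}$-reduction tree $\mathcal{T}^\mathcal{B}$ with root $G^\mathcal{B}[w_\mathcal{B}]$ and write $\mathcal{L}$ for its set of leaves. Let $\mathcal{F}$ denote the set of all noncrossing alternating forests on $[n+1]$ containing the edge $(1,n+1)$, equipped with lexicographic edge-labels, and write $\sim$ for the equivalence on edge-labeled graphs generated by reduction (2). The plan is to prove $\mathcal{L}/{\sim} = \mathcal{F}$ via two inclusions: the forward one ($\mathcal{L}/{\sim} \subseteq \mathcal{F}$) from Corollary \ref{NA}, Lemma \ref{labeling}, and a combinatorial claim about $(1,n+1)$; the reverse one ($\mathcal{F} \subseteq \mathcal{L}/{\sim}$) from a cardinality argument based on the Reduction Lemma together with Proposition \ref{non}.

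For the forward inclusion, Corollary \ref{NA} gives that each $F \in \mathcal{L}$ is a good noncrossing alternating forest, and Lemma \ref{labeling} then produces an equivalent $F'$ with lexicographic edge-labels by applying finitely many reductions (2). It remains to show that $(1,n+1) \in E(F)$. First I would verify that each of the reductions described by (\ref{graphs2}) preserves the property that vertices $1$ and $n+1$ lie in the same connected component: for $G_1, G_2$ the graph remains a tree on $[n+1]$, and for $G_3$ the only vertex that can become isolated is $j$, which satisfies $1 \le i < j < k \le n+1$ and hence is not $1$ or $n+1$. Second I would prove the combinatorial claim that any noncrossing alternating forest in which $1$ and $n+1$ are connected must contain $(1,n+1)$ as an edge. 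Assuming a path $1 = v_0, v_1, \ldots, v_k = n+1$ in the forest with $k \ge 2$, the alternating condition forces the shape $1 = v_0 < v_1 > v_2 < v_3 > \cdots < v_k = n+1$, so $k$ is odd; the noncrossing condition applied to the outer edges $(1,v_1)$ and $(v_{k-1}, n+1)$ together with an inductive analysis of the intermediate edges pins every peak $v_{2i+1}$ below $v_1$, contradicting $v_k = n+1 > v_1$.

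For the reverse inclusion, iterating the Reduction Lemma (Lemma \ref{reduction_lemma}) yields
\[
\mathcal{P}(P)^\circ \;=\; \bigsqcup_{F \in \mathcal{L}} \mathcal{P}(F)^\circ.
\]
By Lemma \ref{const}, the number $f_k$ of open $k$-simplices in this decomposition is an invariant of $\mathcal{P}(P)$. Proposition \ref{non} exhibits a specific $\mathcal{B}$-reduction tree whose leaves are, up to reduction (2), exactly $\mathcal{F}$; hence $f_k = |\{F \in \mathcal{F} : |E(F)| = k\}|$ for every $\mathcal{B}$-reduction tree. In any reduction tree, distinct leaves correspond to open simplices with disjoint relative interiors, hence to distinct edge sets; since the lex-labeling is determined by the edge set, distinct leaves yield distinct elements of $\mathcal{F}$ after applying Lemma \ref{labeling}. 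Combining this injectivity with the matching cardinalities forces the map $\mathcal{L}/{\sim} \to \mathcal{F}$ to be a bijection, establishing the reverse inclusion.

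The main obstacle is the combinatorial step verifying $(1,n+1) \in E(F)$ for every leaf $F$: preservation of $1$--$(n+1)$ connectivity under reductions is elementary, but the noncrossing-alternating-forces-$(1,n+1)$ implication requires careful inductive analysis of which edges among a hypothetical longer path are compatible with noncrossing. Once this fact is in hand, the remaining pieces fit together cleanly: Corollary \ref{NA} and Lemma \ref{labeling} handle the noncrossing-alternating and lex-labeling properties, and the counting argument via Lemma \ref{const} and Proposition \ref{non} handles the reverse inclusion.
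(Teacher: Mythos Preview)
Your proposal is correct and follows essentially the same route as the paper: establish the forward inclusion via Corollary \ref{NA} and Lemma \ref{labeling}, then obtain the reverse inclusion by combining Proposition \ref{non} with the invariance of the leaf counts (the paper cites Theorem \ref{thm_forest}, which is itself proved from Lemma \ref{const} and the Reduction Lemma, exactly as you do) together with the observation that no underlying forest can repeat among the leaves. The only real difference is that where the paper simply asserts ``it is clear'' that every leaf contains the edge $(1,n+1)$, you supply a genuine argument (connectivity of $1$ and $n+1$ is preserved under reductions, and a noncrossing alternating path from $1$ to $n+1$ of length $\ge 2$ forces all later peaks strictly below $v_1$, contradicting $v_k=n+1$); this is a welcome elaboration rather than a departure in strategy.
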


\proof

By Proposition \ref{non} there exists a $\mathcal{B}$-reduction tree which satisfies the conditions above. By Theorem \ref{thm_forest}  the number of forests with a fixed number of  edges among the leaves of an $\mathcal{S}$-reduction tree is independent of the particular $\mathcal{S}$-reduction tree, and, thus, the same is true for a $\mathcal{B}$-reduction tree.  It is clear that all forests labeling the leaves of a $\mathcal{B}$-reduction tree  with root labeled by $G^\mathcal{B}[w_{\mathcal{B}}]$ have to contains the edge $(1, n+1)$.  Also,   no vertex-labeled forest, with edge-labels disregarded, can appear twice among the leaves of a $\mathcal{B}$-reduction tree. Together with Lemma \ref{labeling} these imply the statement of Theorem \ref{ajaj}.
\qed

\medskip 

As corollaries of  Theorem \ref{ajaj} we obtain  the  characterziation of reduced forms of the noncommutative monomial $w_{\mathcal{B}}$, as well as a way to calculate $f_{P, k}$, the number of forests with $k$ edges labeling the leaves of an $\mathcal{S}$-reduction tree $\mathcal{T}_P^\mathcal{S}$ with root labeled $P=([n+1], \{(i, i+1) \mid i \in [n]\}).$

\begin{theorem} \label{main}
If  the polynomial $P^\mathcal{B}_n(x_{ij})$ is a reduced form of  $w_{\mathcal{B}}$,  then

$$P^\mathcal{B}_n(x_{ij})=\sum_F \beta^{n-|E(F)|} x^F,$$

\noindent where the sum runs over all   noncrossing alternating forests $F$ with lexicographic  edge-labels  on the vertex set $[n+1]$ containing edge $(1, n+1)$, and $x^F$ is defined to be the noncommutative monomial $\prod_{l=1}^k x_{i_l,j_l}$ if $F$ contains the edges $(i_1, j_1)_1, \ldots, (i_k, j_k)_k$.  

\end{theorem}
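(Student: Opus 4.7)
The plan is to derive Theorem \ref{main} essentially as a corollary of Theorem \ref{ajaj}, combined with a simple bookkeeping argument for the powers of $\beta$ along the branches of a $\mathcal{B}$-reduction tree. No new geometric input should be needed, since all of the combinatorial bijection work has already been done in Section \ref{reductionsB1}.

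First, I would recall from Section \ref{sec:red} the general principle that any reduced form $P^\mathcal{B}_n$ of $w_\mathcal{B}$ can be read off from the leaves of some $\mathcal{B}$-reduction tree $\mathcal{T}^\mathcal{B}$ rooted at $G^\mathcal{B}[w_\mathcal{B}]$: the reduced form equals the sum, over all leaves $F$ of $\mathcal{T}^\mathcal{B}$, of the noncommutative monomial $x^F$ (read off from the edge labels of $F$) multiplied by a suitable power of $\beta$. I would then justify that this power is exactly $\beta^{n - |E(F)|}$. Inspection of equations (\ref{graphs2}) shows that among the three children of a node, $G_1$ and $G_2$ preserve the parent's edge count, while $G_3$ drops it by one and corresponds to the third term $\beta x_{ik}$ in the reduction rule (\ref{red}). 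A straightforward induction on the depth of a leaf in $\mathcal{T}^\mathcal{B}$ then shows that along any root-to-leaf path, the number of ``$\beta$-branches'' taken is exactly the edge-count deficit $n - |E(F)|$, so each leaf $F$ contributes $\beta^{n - |E(F)|} x^F$ to the reduced form.

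Next I would apply Theorem \ref{ajaj}: regardless of the particular choices made during the reduction process, the set of leaves of $\mathcal{T}^\mathcal{B}$ is, up to applications of reduction (2), precisely the set of noncrossing alternating forests on $[n+1]$ with lexicographic edge-labels containing the edge $(1, n+1)$. Summing $\beta^{n - |E(F)|} x^F$ over this set yields exactly the right-hand side claimed in Theorem \ref{main}.

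The only subtlety worth checking, and what I expect to be the main (if minor) obstacle, is well-definedness: applications of reduction (2) at a leaf swap the labels of two vertex-disjoint edges, so one must verify that the monomial $x^F$ is unchanged under such swaps. This holds because reduction (2) only permutes labels of edges $(i,j)_a$ and $(k,l)_{a+1}$ with $i,j,k,l$ distinct, and the variables $x_{ij}$ and $x_{kl}$ commute in $\mathcal{B}$ precisely under this hypothesis. Lemma \ref{labeling} then guarantees that every reduction-(2) equivalence class of leaves contains a canonical representative with lexicographic edge-labels, so the resulting sum is a well-defined polynomial in $\mathcal{B}$ and matches the stated formula.
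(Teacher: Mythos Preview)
Your proposal is correct and follows essentially the same approach as the paper, which presents Theorem \ref{main} as an immediate corollary of Theorem \ref{ajaj} without writing out a separate proof. You have simply filled in the bookkeeping details (the $\beta$-power count and the well-definedness under reduction (2)) that the paper leaves implicit.
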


\begin{proposition}
The number of forests with $k$ edges labeling the leaves of an $\mathcal{S}$-reduction tree $\mathcal{T}_P^\mathcal{S}$, $f_{P, k}$,  is equal to the number of  noncrossing alternating forests on the vertex set  $[n+1]$ and $k+1$ edges such that edge $(1, n+1)$ is present. 
\end{proposition}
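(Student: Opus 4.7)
The plan is to treat this proposition as an almost immediate corollary of Theorem \ref{ajaj} together with the invariance statement of Theorem \ref{thm_forest}, with everything reducing to a bookkeeping argument about label-forgetting.

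First I would observe that an $\mathcal{S}$-reduction tree is obtained from a $\mathcal{B}$-reduction tree (with the same underlying root graph) simply by erasing every edge-label at every node. Indeed, the rewrite rules (\ref{graphs2}) collapse to (\ref{graphs}) once the subscripts are dropped, and reduction (2) acts trivially on the underlying vertex-labeled graph. As a consequence, any $\mathcal{B}$-reduction tree rooted at $G^\mathcal{B}[w_\mathcal{B}]$ projects, under label-forgetting, onto an $\mathcal{S}$-reduction tree $\mathcal{T}_P^\mathcal{S}$ whose leaves coincide as vertex-labeled graphs with those of the $\mathcal{B}$-tree; in particular, the number of edges at each leaf is preserved by the projection.

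Next I would invoke Theorem \ref{ajaj}: for a suitable choice of reductions, the leaves of a $\mathcal{B}$-reduction tree rooted at $G^\mathcal{B}[w_\mathcal{B}]$ are, up to applications of reduction (2), precisely the noncrossing alternating forests on the vertex set $[n+1]$ containing the edge $(1, n+1)$, each carrying its unique lexicographic edge-labeling. Because the lexicographic labeling is determined uniquely by the underlying vertex-labeled forest, label-forgetting is a bijection between this set of $\mathcal{B}$-leaves and the set of noncrossing alternating forests on $[n+1]$ containing $(1, n+1)$. Restricting that bijection to forests with a prescribed number of edges then matches the number of $\mathcal{S}$-leaves having that edge-count with the number of noncrossing alternating forests on $[n+1]$ with the corresponding number of edges and containing $(1, n+1)$.

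Finally, Theorem \ref{thm_forest} guarantees that the count $f_{P,k}$ of $\mathcal{S}$-leaves with $k$ edges does not depend on the specific $\mathcal{S}$-reduction tree chosen, so the equality established via the $\mathcal{S}$-tree arising from Proposition \ref{non} holds for every $\mathcal{T}_P^\mathcal{S}$. The substance of the argument has been done in Theorem \ref{ajaj}; the only real obstacle here is the bookkeeping: one must track exponents of $\beta$ carefully (each application of reduction (\ref{red}) that contributes a $\beta$ also removes an edge), so that the edge-count of a leaf, the exponent of $\beta$ in the corresponding term, and the index $k$ in $f_{P,k}$ are aligned consistently with the enumeration on the right-hand side.
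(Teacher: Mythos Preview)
Your argument is correct and follows essentially the same route as the paper's own proof: both invoke Theorem \ref{thm_forest} for the independence of $f_{P,k}$ from the particular $\mathcal{S}$-reduction tree, then obtain the count by forgetting edge-labels in a $\mathcal{B}$-reduction tree and applying Theorem \ref{ajaj}. (Note that, like the paper's proof, your argument actually establishes that $f_{P,k}$ equals the number of noncrossing alternating forests on $[n+1]$ with $k$ edges containing $(1,n+1)$; the ``$k+1$'' in the displayed statement appears to be a typo.)
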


 \proof
 Theorem \ref{thm_forest} proves that  number of leaves  labeled by forests with $k$ edges in any $\mathcal{S}$-reduction tree with root labeled $P$ is independent of the particular $\mathcal{S}$-reduction tree. Since a $\mathcal{B}$-reduction tree becomes an $\mathcal{S}$-reduction tree when the edge-labels from the graphs labeling its nodes are deleted, the  number of leaves  labeled by forests with $k$ edges in any $\mathcal{S}$-reduction tree with root labeled $P$ is equal to the number of noncrossing alternating forests with lexicographic  edge-labels  on the vertex set $[n+1]$ with $k$ edges  containing edge $(1, n+1)$ by Theorem \ref{ajaj}. 
 
 \qed
 
  The {\bf Schr\"oder numbers} $s_n$ count the number of ways  to draw any number of diagonals of a convex $(n+2)$-gon that do not intersect in their interiors. Let   $s_{n, k}$ denote  the number   of ways  to draw $k$  diagonals of a convex $(n+2)$-gon that do not intersect in their interiors. Cayley  \cite{cay} in 1890 showed that   $\displaystyle s_{n, k}=\frac{1}{n+1} {n+k+1 \choose n} {n-1 \choose k}$.  

 \begin{lemma} \label{bij}  
 There is a bijection between the set of  noncrossing alternating forests on the vertex set  $[n+1]$ and $k+1$ edges such that edge $(1, n+1)$ is present and ways  to draw $k$  diagonals of a convex $(n+2)$-gon that do not intersect in their interiors.  Thus, $f_{P, k+1}=s_{n, k}$.
 \end{lemma}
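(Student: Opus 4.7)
The plan is to construct an explicit bijection. Label the vertices of the convex $(n+2)$-gon as $v_0, v_1, \ldots, v_{n+1}$ in cyclic order. Given a noncrossing alternating forest $F$ on $[n+1]$ with $(1, n+1) \in E(F)$ and $k+1$ edges, I would set
\[
\Phi(F) = \{(v_{i-1}, v_j) : (i, j) \in E(F),\ (i, j) \neq (1, n+1)\},
\]
a collection of $k$ chords of the polygon. The candidate inverse is $\Psi(D) = \{(a+1, b) : (v_a, v_b) \in D\} \cup \{(1, n+1)\}$. One can sanity-check on small cases: for $n = 2$ the five noncrossing alternating spanning trees of $K_4$ containing $(1, 4)$ pair off with the five valid non-crossing pairs of diagonals of the pentagon.

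The first substantive step is well-definedness. For $\Phi$, the image $(v_{i-1}, v_j)$ fails to be a diagonal precisely when $(i-1, j) = (0, n+1)$, i.e., $(i, j) = (1, n+1)$, which is excluded from the domain. Two image diagonals $(v_{i_1 - 1}, v_{j_1})$ and $(v_{i_2 - 1}, v_{j_2})$ with $i_1 < i_2$ cross iff $i_1 < i_2 \le j_1 < j_2$. The strict subcase $i_2 < j_1$ would make the forest edges themselves cross, contradicting $F$ noncrossing; the boundary subcase $i_2 = j_1$ would make vertex $j_1$ a middle vertex of $F$ via the edges $(i_1, j_1)$ and $(j_1, j_2)$, violating alternating. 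Running this argument in reverse shows $\Psi(D)$ is noncrossing and alternating, and both $\Phi \circ \Psi$ and $\Psi \circ \Phi$ are the identity on edge sets by inspection.

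The main obstacle is showing that $\Psi(D)$ is a forest, since it is produced with $k+1$ edges on $n+1$ vertices with no a~priori acyclicity. I would prove the stronger statement that every noncrossing alternating graph $G$ on $[n+1]$ is a forest. Suppose $G$ contains a cycle and choose one, $C$, of minimum length. The alternating property forbids a cycle vertex $v$ from having one cycle-edge to a smaller endpoint and another to a larger endpoint (which would make $v$ a middle vertex), so each cycle vertex is a strict local extremum along $C$, forcing $|C| = 2m$ even with vertices $u_1, w_1, u_2, w_2, \ldots, u_m, w_m$ in cyclic order, the $u_i$'s local minima and the $w_i$'s local maxima; in particular $u_i < w_{i-1}, w_i$ for all $i$ (indices mod $m$). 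Take $u_1$ to be the numerical minimum of $V(C)$. For any $j \neq 1$, comparing the cycle edges $(u_1, w_1)$ and $(u_j, w_j)$: if $u_j < w_1$, the only way to avoid the interleaving pattern $u_1 < u_j < w_1 < w_j$ is $w_j < w_1$. Since $u_2 < w_1$ by cycle-adjacency we get $w_2 < w_1$; then $u_3 < w_2 < w_1$ gives $w_3 < w_1$; iterating along the cycle yields $w_2, w_3, \ldots, w_m < w_1$. Running the analogous argument along the cycle in the opposite direction from $u_1$ (through $w_m, u_m, w_{m-1}, \ldots$) yields $w_1, w_2, \ldots, w_{m-1} < w_m$, and in particular $w_1 < w_m$, contradicting $w_m < w_1$. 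Hence $G$ has no cycles, $\Psi(D)$ is indeed a forest, $\Phi$ and $\Psi$ are mutual bijections, and $f_{P, k+1} = s_{n, k}$ follows immediately.
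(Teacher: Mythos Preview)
Your map is exactly the paper's bijection (the paper labels the $(n+2)$-gon by $1,\dots,n+2$ and sends each edge $(i,j)\neq(1,n+1)$ to the diagonal $(i,j+1)$, which matches your $(i,j)\mapsto(v_{i-1},v_j)$ under $v_\ell\leftrightarrow\ell+1$), and your verification---including the acyclicity argument that every noncrossing alternating graph is a forest---is correct and considerably more detailed than the one-line proof in the paper. One small slip: your illustrative sanity check with $K_4$ and the pentagon is the case $n=3$, not $n=2$.
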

 
 \proof
 The bijection can be described as follows.  Given a forest $F$ with edges $(i_1, j_1), \ldots, (i_k, j_k)$, $(1, n+1)$, correspond to it an $(n+2)$-gon on vertices $1, \ldots, n+2$ in a clockwise order, with diagonals  $(i_1, j_1+1), \ldots, (i_k, j_k+1)$.  
 
 \qed 
 
Using     $f_{P, k+1}=\frac{1}{n+1} {n+k+1 \choose n} {n-1 \choose k}$  we  specialize Theorems \ref{thm_forest} and \ref{ehr_forest} to 
 Theorems \ref{main2} and \ref{ehr_A}.

 \begin{theorem} \label{main2}
If  the polynomial $P^\mathcal{S}_n(x_{ij})$ is a reduced form of  $w_{\mathcal{S}}$,  then

$$P^\mathcal{S}_n(x_{ij}=1)= \sum_{m=0}^{n-1}  s_{n, n-m-1} \beta^m,$$

\noindent where $\displaystyle s_{n, k}=\frac{1}{n+1} {n+k+1 \choose n} {n-1 \choose k}$ is the number of noncrossing alternating forests on the vertex set $[n+1]$ with  $k+1$ edges, containing edge $(1, n+1)$.

\end{theorem}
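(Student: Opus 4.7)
The plan is to derive Theorem \ref{main2} as a direct specialization of the general machinery already established for forests. Concretely, I would apply Theorem \ref{thm_forest} to the particular forest $F=P=([n+1],\{(i,i+1)\mid i\in[n]\})$, which has $l=n$ edges. That theorem immediately gives
$$P^\mathcal{S}_n(x_{ij}=1)=\sum_{m=0}^{n-1} f_{P,\,n-m}\,\beta^m,$$
so the entire content of the statement is reduced to identifying the numbers $f_{P,k}$ with the Schr\"oder diagonal counts $s_{n,k-1}$.

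Next I would invoke Lemma \ref{bij}, which establishes the bijection $f_{P,k+1}=s_{n,k}$ between noncrossing alternating forests on $[n+1]$ with $k+1$ edges that contain the edge $(1,n+1)$ and sets of $k$ noncrossing diagonals in a convex $(n+2)$-gon. Setting $k=n-m-1$ converts $f_{P,n-m}$ into $s_{n,n-m-1}$, so substituting into the displayed identity above yields
$$P^\mathcal{S}_n(x_{ij}=1)=\sum_{m=0}^{n-1} s_{n,\,n-m-1}\,\beta^m,$$
which is exactly the claimed formula. The closed form $s_{n,k}=\tfrac{1}{n+1}\binom{n+k+1}{n}\binom{n-1}{k}$ is Cayley's classical count of $(n+2)$-gon dissections by $k$ noncrossing diagonals, so no further work is needed to verify it.

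There is no genuine obstacle here: both ingredients (Theorem \ref{thm_forest} and Lemma \ref{bij}) are already in hand, and the theorem is simply their combination with Cayley's formula. The only thing worth double-checking is the index bookkeeping, namely that the $m=0$ term of the sum corresponds to $n$-edge forests (top-dimensional simplices, counted by $s_{n,n-1}$, which equals the Catalan number $C_n$ as expected), and that the $m=n-1$ term corresponds to the single forest consisting of the edge $(1,n+1)$ alone, contributing the factor $s_{n,0}=1$ with coefficient $\beta^{n-1}$. Both endpoints match, confirming the indexing is correct.
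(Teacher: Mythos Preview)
Your proposal is correct and follows essentially the same approach as the paper: the paper states just before Theorem \ref{main2} that it is obtained by specializing Theorem \ref{thm_forest} using the identity $f_{P,k+1}=s_{n,k}$ from Lemma \ref{bij}, which is exactly what you do.
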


\begin{theorem} \label{ehr_A} (Cf. \cite[Exercise 6.31]{ec2},  \cite{fong}) The Ehrhart polynomial of the polytope ${\mathcal{P}(A_n^+) }$ is 
 $$ L_{\mathcal{P}(A_n^+) }(t)= \frac{(-1)^n}{n+1} \sum_{i=0}^{\infty} {n+i \choose n} {n-1 \choose i-1}  {-t-1 \choose i}.$$
 \end{theorem}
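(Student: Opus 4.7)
The plan is to specialize the general Ehrhart formula of Theorem \ref{ehr_forest} to the path $P = ([n+1], \{(i,i+1) \mid i \in [n]\})$, which satisfies $\mathcal{P}(A_n^+) = \mathcal{P}(P)$, and then convert the result into the stated closed form using standard binomial identities. Since $P$ has $l = n$ edges, Theorem \ref{ehr_forest} gives immediately
\begin{equation*}
L_{\mathcal{P}(A_n^+)}(t) \;=\; (-1)^n \sum_{i=0}^{n} (-1)^i f_{P,i} \binom{t+i}{i}.
\end{equation*}
So the whole task reduces to identifying the numbers $f_{P,i}$ and repackaging the sum.

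Next I would invoke Lemma \ref{bij}, which says $f_{P,k+1} = s_{n,k}$, together with Cayley's formula $s_{n,k} = \frac{1}{n+1}\binom{n+k+1}{n}\binom{n-1}{k}$. Re-indexing $k = i-1$ yields
\begin{equation*}
f_{P,i} \;=\; \frac{1}{n+1}\binom{n+i}{n}\binom{n-1}{i-1},
\end{equation*}
with the convention $\binom{n-1}{-1} = 0$ handling the $i=0$ case (consistent with the fact that a reduction tree produces no leaves with zero edges). Substituting into the Ehrhart sum gives
\begin{equation*}
L_{\mathcal{P}(A_n^+)}(t) \;=\; \frac{(-1)^n}{n+1} \sum_{i=0}^{n} (-1)^i \binom{n+i}{n}\binom{n-1}{i-1}\binom{t+i}{i}.
\end{equation*}

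Finally I would apply the standard identity $\binom{t+i}{i} = (-1)^i \binom{-t-1}{i}$, which absorbs the $(-1)^i$ factor in front and transforms the sum into
\begin{equation*}
\frac{(-1)^n}{n+1}\sum_{i=0}^{n} \binom{n+i}{n}\binom{n-1}{i-1}\binom{-t-1}{i}.
\end{equation*}
Since $\binom{n-1}{i-1} = 0$ whenever $i > n$, the upper limit $n$ can be replaced by $\infty$ with no change in value, matching the statement of the theorem.

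Because every ingredient is already in place (Theorem \ref{ehr_forest}, Lemma \ref{bij}, and Cayley's count), there is no genuine obstacle; the only care required is bookkeeping of signs in the reciprocity step and noting that the $i=0$ term vanishes thanks to the vanishing of $\binom{n-1}{-1}$. This makes the theorem a direct corollary of the combinatorial content of the preceding sections rather than requiring any new geometric or algebraic input.
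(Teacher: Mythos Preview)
Your proposal is correct and follows exactly the route the paper takes: the text immediately preceding Theorem~\ref{ehr_A} says explicitly that it is obtained by specializing Theorem~\ref{ehr_forest} using $f_{P,k+1}=\frac{1}{n+1}\binom{n+k+1}{n}\binom{n-1}{k}$ from Lemma~\ref{bij}. The only cosmetic difference is that the intermediate form $L_{\mathcal{P}(F)}(t)=(-1)^l\sum_i f_{F,i}\binom{-t-1}{i}$ already appears inside the proof of Theorem~\ref{ehr_forest}, so one could quote that line directly and skip your final sign-conversion step.
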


The   generating function $J(\mathcal{P}(A_n^+) , x)=1+\sum_{t=1}^\infty  L_{\mathcal{P}(A_n^+) }(t)x^t$   was previously   calculated  by different methods; see \cite[Exercise 6.31]{ec2},  \cite{fong}.
   
\section{Proof  of Theorems \ref{vol} and  \ref{thm1} in the general case}
  \label{sec:gen} 
  
  In this section we find an analogue of Theorem \ref{main} for any noncrossing good tree $T$, and using it calculate the numbers $f_{T, k}$. 
  Specializing Theorems \ref{thm_forest} and \ref{ehr_forest} to $T$, we   then   conclude the proofs of Theorems   \ref{vol} and  \ref{thm1}.

   Theorems  \ref{main} and \ref{main2} imply Theorem \ref{thm1} for the special case   $T=P=([n+1], \{(i, i+1) \mid i \in [n]\}).$  We  generalize 
   Theorems \ref{ajaj}, \ref{main} and \ref{main2}   to  monomials $m^\mathcal{B}[T]$, where $T$ is a good tree.  For this we need some technical definitions.

Consider a noncrossing tree $T$ on $[n+1]$. We define the \textbf{pseudo-components} of $T$ inductively. The unique simple path $P$ from $1$ to $n+1$ is a pseudo-component of $T$. The graph $T\backslash P$ is an edge-disjoint union of   trees $T_1, \ldots, T_k$, such that   if $v$ is a vertex of $P$ and $v \in T_l$, $l \in [k]$, then $v$ is either the minimal or maximal vertex of $T_l$ . Furthermore, there are no $k-1$ trees whose edge-disjoint  union is $T\backslash P$ and which satisfy  all the requirements stated above. The set of pseudo-components of $T$, denoted by $ps(T)$ is $ps(T)=\{P\} \cup ps(T_1)\cup \cdots \cup ps(T_k)$. A pseudo-component $P'$ is said to be on $[i, j]$, $i<j$  if it is a path with endpoints $i$ and  $j$. A pseudo-component $P' $ on $[i, j]$ is said to be a \textbf{left pseudo-component} of $T$  if there are no edges $(s, i) \in E(T)$ with $s<i$   and a \textbf{right pseudo-component}  if  if there are no edges $(j, s) \in E(T)$ with $j<s$.     See Figure \ref{fig:pseudo} for an example.
   
   \begin{figure}[htbp] 
\begin{center} 
\includegraphics[width=.7\textwidth]{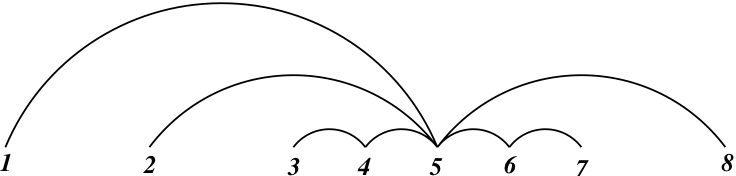} 
\caption{The edge sets of the pseudo-components in the graph depicted are $\{(1, 5), (5, 8)\}, \{(2, 5)\}, \{(3, 4), (4, 5)\}, \{(5, 6), (6, 7)\}$. The pseudo-component with edge set  $\{(1, 5), (5, 8)\}$ is a both a left and right pseudo-component, while the  pseudo-components with edge sets     $ \{(2, 5)\}, \{(3, 4), (4, 5)\}$     are  left  pseudo-components and the  pseudo-component with edge set  $\{(5, 6), (6, 7)\}$ is a  right pseudo-component.} 
\label{fig:pseudo}
\end{center} 
\end{figure}

\begin{proposition} \label{gen:non} Let $T$ be a good tree. 
By choosing the series of reductions suitably, the set of leaves of a $\mathcal{B}$-reduction tree with root $T$ can be all 
 noncrossing alternating spanning  forests of $\overline T$ with lexicographic  edge-labels  on the vertex set $[n+1]$ containing edge $(1, n+1)$ and at least one edge of the form $(i_1, j)$ with $i_1\leq i$ for each right  pseudo-component of $T$ on $[i, j]$ and  at least one edge of the form $(i, j_1)$ with $j\leq j_1$ for each left pseudo-component of $T$ on $[i, j]$. See Figure \ref{fig:pseudo-comp} for an example.
 \end{proposition}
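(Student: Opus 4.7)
The plan is to establish the proposition by verifying both directions of the set-equality: every leaf of a suitably chosen $\mathcal{B}$-reduction tree lies in the stated family, and conversely every member of the family can be realized as a leaf.

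For the first direction, Corollary \ref{NA} guarantees that every leaf is a good noncrossing alternating forest, and Lemma \ref{labeling} ensures that after applying reductions (2) we may assume its edge labels are lexicographic. Because reduction (\ref{red}) replaces $(i,j),(j,k)$ by combinations containing only the transitive edge $(i,k)$, every edge produced along the way lies in $\overline{T}$. To show each leaf is a spanning forest of $\overline{T}$ and contains the edge $(1,n+1)$, I would track \emph{playable routes} through the reductions: the unique simple path in $T$ from $1$ to $n+1$ gives a playable route from $1$ to $n+1$ which is preserved by each reduction (the edge $(i,k)$ is always retained somewhere), and once the forest is alternating, the only playable route from $1$ to $n+1$ is the single edge $(1,n+1)$, by Lemma \ref{playable_route}. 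The same playable-route bookkeeping applied to a right (resp.\ left) pseudo-component of $T$ on $[i,j]$ forces the leaf to contain an edge $(i_1,j)$ with $i_1 \le i$ (resp.\ $(i,j_1)$ with $j \le j_1$).

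For the second direction I would proceed by induction on the number of edges of $T$, mimicking the inductive strategy of Proposition \ref{non}. First I would apply Proposition \ref{non} along the main pseudo-component $P$ of $T$, performing only reductions whose edges lie in $P$, so that the edges belonging to the other pseudo-components remain untouched. This realizes any desired noncrossing alternating spanning forest of $\overline{P}$ containing $(1,n+1)$. Next, each remaining pseudo-component is a path attached to a single vertex of $P$ and sits on a vertex set disjoint (apart from that junction) from the others, so the inductive hypothesis applies independently to each of them, producing any admissible local configuration. The left/right conditions arise naturally: a right pseudo-component on $[i,j]$ can only reduce internally to an edge ending at $j$ whose first coordinate has been pushed leftward, yielding some $(i_1,j)$ with $i_1 \le i$, and symmetrically for left pseudo-components. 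Uniqueness of the realization follows from the Reduction Lemma, which identifies distinct leaves with interior-disjoint simplices in a subdivision of $\mathcal{P}(T)$.

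The main obstacle will be the careful bookkeeping needed to establish the pseudo-component conditions as exactly the right combinatorial constraint, and in particular verifying that reductions performed inside one pseudo-component do not preclude the desired reductions inside another. The key observation that should make this go through is that, because $T$ is noncrossing and a good tree, the edges of two distinct pseudo-components meet at most at a shared junction vertex, so a reduction performed in one never creates an $(i,j),(j,k)$ pattern spanning two pseudo-components. Once this non-interference is in hand, the inductive construction and the playable-route characterization line up precisely with the statement to be proved.
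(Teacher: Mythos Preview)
Your necessity argument via playable-route bookkeeping is reasonable and matches what the paper dismisses as ``easily seen.'' The gap is in the sufficiency direction: the ``key observation'' you rely on---that reductions never produce an $(i,j),(j,k)$ pattern spanning two pseudo-components---is false, and without it your inductive construction does not produce all the required leaves.

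Take $T$ on $[4]$ with edges $(1,2),(2,4),(2,3)$. The main pseudo-component is $\{(1,2),(2,4)\}$ and $\{(2,3)\}$ is a right pseudo-component on $[2,3]$. Reducing only inside the main pseudo-component produces, among other outcomes, the graph with edges $(1,4),(1,2)$; adjoining the untouched edge $(2,3)$ gives $(1,4),(1,2),(2,3)$, in which $(1,2),(2,3)$ is a reducible pair straddling the two pseudo-components. Without performing that cross-component reduction you never reach the leaves $\{(1,4),(1,3),(1,2)\}$ and $\{(1,4),(1,3),(2,3)\}$, both of which are noncrossing alternating spanning trees of $\overline T$ required by the statement. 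The shared junction vertex is precisely where such cross-component patterns arise, so independent processing of the pseudo-components cannot be the whole story.

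The paper's proof inducts on the number of pseudo-components, peeling off one pseudo-component $P$ whose removal still leaves a tree, applies the inductive hypothesis to $T\setminus P$ and Proposition~\ref{non} to $P$, and combines the outcomes into an intermediate set $S$ of graphs. Crucially, it then continues performing reductions from the graphs in $S$---this is exactly where the cross-component reductions occur---and verifies by inspection that every target forest is reached. Your plan can be repaired by inserting this further-reduction step and abandoning the non-interference claim, at which point it essentially coincides with the paper's argument.
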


    \begin{figure}[htbp] 
\begin{center} 
\includegraphics[width=.95\textwidth]{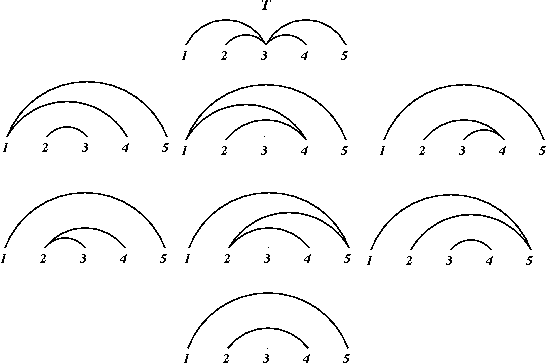} 
\caption{Trees $T_1, \ldots, T_6$ as depicted in Figure \ref{fig:lex} are the noncrossing alternating spanning trees of $\overline{T}$. This figure depicts all the other noncrossing alternating spanning  forests of $\overline T$ on the vertex set $[n+1]$ containing edge $(1, n+1)$ and at least one edge of the form $(i_1, j)$ with $i_1\leq i$ for each right  pseudo-component of $T$ on $[i, j]$ and  at least one edge of the form $(i, j_1)$ with $j\leq j_1$ for each left pseudo-component of $T$ on $[i, j]$. By the Ehrhart polynomial form of Theorem  \ref{vol}, see end of Section \ref{sec:gen}, $L_{\mathcal{P}(T)}(t)={t+2 \choose 2}-6 {t+3 \choose 3} + 6{t+4 \choose 4}$, since $f_{T, 2}=1, f_{T, 3}=6, f_{T, 4}=6$ and $f_{T, i}=0$, for $i \neq 2,3,4$.} 
\label{fig:pseudo-comp}
\end{center} 
\end{figure}


\begin{proof}  
 It is  easily seen that  all   graphs labeling the leaves of a  $\mathcal{B}$-reduction tree  must be noncrossing alternating spanning  forests of $\overline T$  on the vertex set $[n+1]$ containing edge $(1, n+1)$ and at least one edge of the form $(i_1, j)$ with $i_1\leq i$ for each right pseudo-component of $T$ on $[i, j]$ and  at least one edge of the form $(i, j_1)$ with $j\leq j_1$ for each left  pseudo-component of $T$ on $[i, j]$. The proof then follows the proof  of Proposition \ref{non}. To show that any noncrossing alternating spanning  forests of $\overline T$  on the vertex set $[n+1]$ containing edge $(1, n+1)$ and at least one edge of the form $(i_1, j)$ with $i_1\leq i$ for each right  pseudo-component of $T$ on $[i, j]$ and  at least one edge of the form $(i, j_1)$ with $j\leq j_1$ for each left  pseudo-component of $T$ on $[i, j]$  appears among the leaves of a $\mathcal{B}$-reduction tree and that all these forests have lexicographic  edge-labels, we use  induction on the number of pseudo-components of $T$. The base case is proved in Proposition \ref{non}. Suppose now that $T$ has $p$ pseudo-components, and let $P$ be such a pseudo-component that $T\backslash P$ is a tree with $p-1$ pseudo-components. Apply the inductive hypothesis to $T\backslash P$ and  Proposition \ref{non} to $P$ and combine the graphs obtained as outcomes in all the ways possible to obtain a set $S$ of graphs labeling the nodes of the reduction tree from which any leaf can be obtained by successive reductions. By inspection we  see that any  noncrossing alternating spanning  forest of $\overline T$  on the vertex set $[n+1]$ containing edge $(1, n+1)$ and at least one edge of the form $(i_1, j)$ with $i_1\leq i$ for each right pseudo-component of $T$ on $[i, j]$ and  at least one edge of the form $(i, j_1)$ with $j\leq j_1$ for each left  pseudo-component of $T$ on $[i, j]$ can be obtained by reductions from the elements of $S$. Since no graph can be obtained twice, and no other graph can label a leaf of a $\mathcal{B}$-reduction, the proof is complete. 
\end{proof}

\begin{theorem} \label{gen:ajaj} Let $T$ be a good tree. 
The set of leaves of a $\mathcal{B}$-reduction tree  with root labeled $T$ is, up to applications of reduction (2),  the set of all   noncrossing alternating spanning  forests of $\overline T$ with lexicographic edge-labels  on the vertex set $[n+1]$ containing edge $(1, n+1)$ and at least one edge of the form $(i_1, j)$ with $i_1\leq i$ for each right pseudo-component of $T$  on $[i, j]$ and  at least one edge of the form $(i, j_1)$ with $j\leq j_1$ for each left pseudo-component of $T$ on $[i, j]$.
 
\end{theorem}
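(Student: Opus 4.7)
The plan is to imitate the structure of the proof of Theorem \ref{ajaj}, which handled the special case $T=P$, now invoking the general existence statement of Proposition \ref{gen:non} instead of Proposition \ref{non}. First I would apply Proposition \ref{gen:non}: there exists at least one $\mathcal{B}$-reduction tree $\mathcal{T}_0^\mathcal{B}$ rooted at $T$ whose set of leaves is exactly the family $\mathcal{F}(T)$ of noncrossing alternating spanning forests of $\overline T$ containing the edge $(1,n+1)$ together with the required incident edges for each left and right pseudo-component of $T$, each equipped with its lexicographic edge-labels. This disposes of one direction: such leaves are indeed achievable.

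Next I would argue that every other $\mathcal{B}$-reduction tree $\mathcal{T}^\mathcal{B}$ rooted at $T$ must yield the same set of leaves up to reduction (2). For this I would combine three facts. (i) By Corollary \ref{NA}, any leaf of $\mathcal{T}^\mathcal{B}$ is a good noncrossing alternating forest, and by the same ``forced incidence'' inspection used in Proposition \ref{gen:non} each leaf must contain $(1,n+1)$ and the edges required by the pseudo-components of $T$; hence every leaf of $\mathcal{T}^\mathcal{B}$, viewed as a vertex-labeled forest, lies in $\mathcal{F}(T)$. (ii) By the Reduction Lemma (Lemma \ref{reduction_lemma}), each leaf of $\mathcal{T}^\mathcal{B}$ corresponds, upon forgetting edge labels, to a distinct simplex $\mathcal{P}(F)^\circ$ in a disjoint decomposition of $\mathcal{P}(T)^\circ$, so no underlying vertex-labeled forest can occur more than once among the leaves. (iii) By Theorem \ref{thm_forest} applied to the $\mathcal{S}$-reduction tree obtained from $\mathcal{T}^\mathcal{B}$ by forgetting edge labels, the number of leaves whose underlying forest has exactly $k$ edges equals $f_{T,k}$, which by (i)+(ii) applied to $\mathcal{T}_0^\mathcal{B}$ equals $|\{F \in \mathcal{F}(T) : |E(F)|=k\}|$. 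Combining (i), (ii), (iii), the underlying vertex-labeled forests at the leaves of $\mathcal{T}^\mathcal{B}$ are precisely the elements of $\mathcal{F}(T)$, each appearing once.

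Finally, to handle the edge-labels, I would invoke Lemma \ref{labeling}: any leaf, being a noncrossing alternating forest that is a $\mathcal{B}$-successor of the good tree $T$, can be brought to its lexicographic edge-labeling by finitely many applications of reduction (2). Hence, up to reduction (2), the leaves of $\mathcal{T}^\mathcal{B}$ coincide with the lexicographically labeled elements of $\mathcal{F}(T)$, matching the description in Theorem \ref{gen:ajaj}.

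I expect the main obstacle to be not any single step in the argument outlined above (each is essentially a citation) but the setup in Proposition \ref{gen:non}, which already did the heavy lifting by inducting on the number of pseudo-components of $T$ and gluing the base case of Proposition \ref{non} along the spine path. Once Proposition \ref{gen:non} is in hand, the current theorem is essentially the combinatorial consequence of pairing existence with the invariance from Theorem \ref{thm_forest} and the Reduction Lemma; the only subtlety is making sure that the ``forced incidence'' analysis in step (i) is airtight for every $\mathcal{B}$-reduction tree, which follows from properties $(i)$--$(iv)$ of good forests propagating through reductions by Lemma \ref{huh}.
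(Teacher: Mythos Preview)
Your proposal is correct and follows essentially the same approach as the paper: the paper's proof of Theorem~\ref{gen:ajaj} simply states that it is analogous to the proof of Theorem~\ref{ajaj}, replacing Proposition~\ref{non} by Proposition~\ref{gen:non}, and your outline unpacks exactly that analogy (existence via Proposition~\ref{gen:non}, invariance of leaf counts via Theorem~\ref{thm_forest}, no repeated forests via the Reduction Lemma, forced edge containment, and lexicographic labels via Lemma~\ref{labeling}). If anything, you have been more explicit than the paper in citing Corollary~\ref{NA} and the Reduction Lemma for the no-repetition step, but the logical structure is identical.
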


\proof The proof is analogous to that of Theorem \ref{ajaj} using Proposition \ref{gen:non} instead of   Proposition \ref{non}.

\qed

\medskip 

As corollaries of  Theorem \ref{gen:ajaj} we obtain  the  characterziation of reduced forms of the noncommutative monomial $m^\mathcal{B}[T]$ for a good tree $T$, as well as a combinatorial description of  $f_{T, k}$, the number of forests with $k$ edges labeling the leaves of an $\mathcal{S}$-reduction tree $\mathcal{T}_T^\mathcal{S}$ with root labeled $T.$

\medskip

\noindent {\bf Theorem \ref{thm1}.}
{\it {\bf (Noncommutative part.)} If  the polynomial $P^\mathcal{B}_n(x_{ij})$ is a reduced form of  $m^\mathcal{B}[T]$ for a good tree $T$,  then

$$P^\mathcal{B}_n(x_{ij})=\sum_F \beta^{n-|E(F)|} x^F,$$

\noindent where the sum runs over all    noncrossing alternating spanning  forests of $\overline T$ with lexicographic edge-labels  on the vertex set $[n+1]$ containing edge $(1, n+1)$ and at least one edge of the form $(i_1, j)$ with $i_1\leq i$ for each right pseudo-component of $T$ on $[i, j]$ and  at least one edge of the form $(i, j_1)$ with $j\leq j_1$ for each left pseudo-component of $T$ on $[i, j]$, and $x^F$ is defined to be the noncommutative monomial $\prod_{l=1}^k x_{i_l,j_l}$ if $F$ contains the edges $(i_1, j_1)_1, \ldots, (i_k, j_k)_k$.  

}

\begin{proposition} \label{f_{T, k}} Let $T$ be a good tree. 
The number of forests with $k$ edges labeling the leaves of an $\mathcal{S}$-reduction tree $\mathcal{T}_T^\mathcal{S}$ with root labeled by $T$, $f_{T, k}$,  is equal to the number of  noncrossing alternating spanning forests $F$ of $\overline T$   containing edge $(1, n+1)$ and at least one edge of the form $(i_1, j)$ with $i_1\leq i$ for each right pseudo-component of $T$ on $[i, j]$ and  at least one edge of the form $(i, j_1)$ with $j\leq j_1$ for each left pseudo-component  of $T$ on $[i, j]$.
 
\end{proposition}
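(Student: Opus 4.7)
The plan is to mimic the proof of the analogous Proposition following Theorem \ref{ajaj}, using Theorem \ref{gen:ajaj} in place of Theorem \ref{ajaj}. First I would invoke Theorem \ref{thm_forest} to conclude that $f_{T, k}$ is a well-defined invariant of $T$ alone, independent of the particular $\mathcal{S}$-reduction tree used to compute it. In particular, I am free to choose the most convenient reduction tree for counting purposes.

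Next I would choose a labeling of $T$ making it a good tree (which exists since any forest admits such a labeling, as noted after the definition of good forest), and form a $\mathcal{B}$-reduction tree $\mathcal{T}_T^{\mathcal{B}}$ whose leaves exhaust the set described by Proposition \ref{gen:non} (such a choice of reductions exists by that Proposition). Forgetting edge-labels in every graph labeling a node of $\mathcal{T}_T^{\mathcal{B}}$ yields an $\mathcal{S}$-reduction tree $\mathcal{T}_T^{\mathcal{S}}$ with the same underlying branching structure: the three-graph split of equation (\ref{graphs2}) collapses to the split of equation (\ref{graphs}), and reductions of type (2) become tautological (applying the identity to a commutative monomial). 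Hence the leaves of $\mathcal{T}_T^{\mathcal{S}}$ are in bijection with the $\mathcal{B}$-leaves modulo edge-labelings.

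By Theorem \ref{gen:ajaj}, the leaves of $\mathcal{T}_T^{\mathcal{B}}$ are precisely the noncrossing alternating spanning forests $F$ of $\overline{T}$ equipped with lexicographic edge-labels that contain $(1, n+1)$ and meet the pseudo-component conditions of the proposition. By Lemma \ref{labeling} every such unlabeled forest admits a unique lexicographic labeling, so the forgetful map from $\mathcal{B}$-leaves to underlying unlabeled forests is a bijection onto exactly the set of noncrossing alternating spanning forests described in the statement. Restricting to leaves whose underlying forest has $k$ edges therefore produces $f_{T, k}$ on the left-hand side and the combinatorial count on the right-hand side, and the two are equal.

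The main obstacle is verifying that no unlabeled forest appears more than once among the leaves of the chosen $\mathcal{S}$-reduction tree, since a priori two distinct branches of the tree could produce the same underlying graph after forgetting labels. This is handled via the Reduction Lemma: iterating Lemma \ref{reduction_lemma} from the root $T$ shows that the open simplices $\mathcal{P}(F)^\circ$ indexed by the leaf-forests partition $\mathcal{P}(T)^\circ$ (as in equation (\ref{open})), and distinct forests correspond to simplices supported on distinct cones $\mathcal{C}(F)$, hence to distinct open simplices. This disjointness prevents duplicate leaves and completes the identification $f_{T,k}$ with the desired count.
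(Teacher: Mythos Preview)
Your proposal is correct and follows essentially the same route as the paper's (implicit) proof, which treats this proposition as a direct corollary of Theorem~\ref{gen:ajaj} in exactly the way the earlier analogous proposition was deduced from Theorem~\ref{ajaj}: invoke Theorem~\ref{thm_forest} for well-definedness, strip edge-labels from a $\mathcal{B}$-reduction tree to obtain an $\mathcal{S}$-reduction tree, and read off the leaf count from Theorem~\ref{gen:ajaj}. Your added justification that no unlabeled forest is repeated among the leaves (via the disjointness in equation~(\ref{open})) is more careful than the paper, which leaves this implicit in the word ``set'' in Theorem~\ref{gen:ajaj}; note also that since $T$ is already assumed good in the hypothesis, you need not re-choose a labeling.
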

   
    Proposition \ref{f_{T, k}} provides a combinatorial description of the coefficients in Theorems \ref{thm_forest}, \ref{ehr_forest} and Corollary \ref{vol_forest}, completing  the proofs of Theorems \ref{vol} and \ref{thm1}. We state them in full generality here.

\medskip

\noindent {\bf Theorem \ref{thm1}.}
{\it {\bf (Commutative part.)}  If  the polynomial $P^\mathcal{S}_n(x_{ij})$ is a reduced form of  $m^\mathcal{S}[T]$ for a good tree $T$,  then 
 
  $$P^\mathcal{S}_n(x_{ij}=1)=\sum_{l=0}^{l-1}f_{T, l-m} \beta^m,$$

\noindent where $f_{T, k}$ is as in Proposition \ref{f_{T, k}}. }

\medskip

 \noindent {\bf Theorem \ref{vol}.}
{\it {\bf (Ehrhart polynomial and volume.)}     The Ehrhart polynomial and volume of the polytope ${\mathcal{P}(T) }$, for a good tree $T$  on the vertex set $[n+1]$, are, respectively, 
 
 $$ L_{\mathcal{P}(T) }(t)= (-1)^n\sum_{i=0}^{n} (-1)^i f_{T, i} {t+i \choose i},$$

 $$\mbox{\rm vol}\, \mathcal{P}(T) =\frac{f_{T, n}}{n!},$$

\noindent  where $f_{T, k}$ is as in Proposition \ref{f_{T, k}}. See Figure \ref{fig:pseudo-comp} for an example.
}

\medskip

Theorem \ref{vol} can be generalized so that we not only describe the $n$-dimensional simplices in the triangulation of  $\mathcal{P}(T)$, but also describe their intersections in terms of noncrossing alternating spanning forests in $\overline T$.  Using the Reduction Lemma (Lemma \ref{reduction_lemma}) and Theorem \ref{gen:ajaj} we can deduce the following.

\medskip

 \noindent {\bf Theorem \ref{vol}.}
{\it {\bf (Canonical triangulation.)}  If $T$ is a noncrossing tree on the vertex set $[n+1]$ and   $T_1,\ldots, T_k$ are the noncrossing alternating spanning trees of  $\overline T$, then the root polytopes $\mathcal{P}(T_1), \ldots, \mathcal{P}(T_k)$ are   $n$-dimensional simplices forming a triangulation of   $\mathcal{P}(T)$. Furthermore, the   intersections of the top dimensional simplices   $\mathcal{P}(T_1), \ldots, \mathcal{P}(T_k)$  are simplices $ \mathcal{P}(F)$, where  $F$ run over all   noncrossing alternating spanning  forests of $\overline T$ with lexicographic edge-labels  on the vertex set $[n+1]$ containing edge $(1, n+1)$and at least one edge of the form $(i_1, j)$ with $i_1\leq i$ for each right pseudo-component of $T$  on $[i, j]$ and  at least one edge of the form $(i, j_1)$ with $j\leq j_1$ for each left pseudo-component of $T$ on $[i, j]$. 
 }

 \section{Properties of the canonical triangulation}
 \label{sec:shelling}
 
 In this section we show that the \textbf{canonical triangulation} of  $\mathcal{P}(T)$ into simplices   $\mathcal{P}(T_1), \ldots, \mathcal{P}(T_k)$, and their faces, where $T_1,\ldots, T_k$ are the noncrossing alternating spanning trees of  $\overline T$, as described in Theorem \ref{vol}, is regular and flag.  We  construct a shelling and using this shelling  calculate the generating function  $J(\mathcal{P}(T) , x)=1+\sum_{t=1}^\infty  L_{\mathcal{P}(T) }(t)x^t$, yielding another way to compute the Ehrhart polynomials. This  generalizes the calculation of $J(\mathcal{P}(A_n^+) , x)$,  \cite[Exercise 6.31]{ec2}, \cite{fong}.


Recall that a triangulation of the polytope $P$ is {\bf regular} if there exists a concave  piecewise linear function $f:P \rightarrow \mathbb{R}$ such that the regions of linearity of $f$ are the maximal simplices in the triangulation. 
It has been shown in \cite[Theorem 6.3]{GGP} that the noncrossing triangulation of $\mathcal{P}(A_n^+)$ is regular. This result   can be naturally extended to the canonical triangulation of any of the root polytopes $\mathcal{P}(T)$. An attractive proof   uses the following concave function constructed by  Postnikov for an alternative proof of \cite[Theorem 6.3]{GGP}. 

 Let $f:A \rightarrow \mathbb{R}$ be a function on the set $A$ such that  polytope $P={\rm ConvHull}(A)$. 
Let $\tilde{P}={\rm ConvHull}((a, f(a)) \mid a \in A)$ and define then $f(p)={\rm max}\{x \mid \pi(a, x)=p, (a, x) \in \tilde{P}\}$, $p \in P$.   The  function $f:P \rightarrow \mathbb{R}$ is concave by definition. 
Consider the root polytope $\mathcal{P}(T)$ with  vertices $0$ and $e_i-e_j$, where $(i, j) \in I \times J$. 
Let $f(0)=0$ and $f(e_i-e_j)=(i-j)^2$ for $(i, j) \in I \times J$.  Extend this to a concave piecewise linear function as explained in the above paragraph. A check of the regions of linearity proves the regularity of the canonical triangulation of $\mathcal{P}(T)$.

 It can also be shown that the canonical triangulation of $\mathcal{P}(T)$ is flag, which we leave as an exercise to the reader. For the definition and importance of flag triangulations see \cite[Section 2]{h}.

 The canonical triangulation of  $\mathcal{P}(T)$ is  \textbf{shellable}, if there is a  \textbf{shelling}, a linear order $\v(f_1), \ldots, F_k$  on  $\mathcal{P}(T_1), \ldots, \mathcal{P}(T_k)$, such that for all $2 \leq i \leq k$, $F_i$ is attached to $ \v(f_1) \cup \ldots \cup F_{i-1}$ on a union of nonzero facets of $F_i$. See \cite{shelling} for more details.

 The {\bf lexicographic ordering} on the facets  $\mathcal{P}(T_1), \ldots, \mathcal{P}(T_k)$  is as follows: $\mathcal{P}(T_i)<_{lex} \mathcal{P}(T_j)$ if and only if for some $l$ the first $l$ edges of $T_i$ and $T_j$  in lexicographic ordering coincide and the $(l+1)^{st}$ edge of $T_i$ is less than the $(l+1)^{st}$ edge of $T_j$  in lexicographic ordering. The lexicographic ordering on the edges differs from the one we defined in Section \ref{reductionsB1}; instead, here we  use the conventional one. Namely, edge $(i_1, j_1)$ is less than edge $(i_2, j_2)$ in the  lexicographic  order  if  $i_1<i_2$, or $i_1=i_2$ and $j_1<j_2$.
 
 \begin{theorem} \label{shellable} Let $T$ be a noncrossing tree on the vertex set $[n+1]$. Let $T_1,\ldots, T_k$ be the noncrossing alternating spanning trees of  $\overline T$ such that 
  $ \mathcal{P}(T_1)<_{lex}\cdots<_{lex} \mathcal{P}(T_k)$. Then  $\mathcal{P}(T_1),$ $ \ldots, \mathcal{P}(T_k)$   is a shelling order. See Figure \ref{fig:lex} for an example. 
 
 \end{theorem}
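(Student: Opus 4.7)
The plan is to verify the standard prime shelling criterion for a pure simplicial complex: for every pair $1\le i<j\le k$, I must produce $i'<j$ and a nonzero vertex $v$ of $\mathcal{P}(T_j)$ such that $\mathcal{P}(T_i)\cap\mathcal{P}(T_j)\subseteq \mathcal{P}(T_{i'})\cap\mathcal{P}(T_j)=\mathcal{P}(T_j)\setminus\{v\}$. Since each $\mathcal{P}(T_l)$ is a simplex with vertex set $\{0\}\cup\{e_{ab}^-:(a,b)\in E(T_l)\}$, a facet of $\mathcal{P}(T_j)$ through the origin is shared with $\mathcal{P}(T_{i'})$ precisely when $T_{i'}$ and $T_j$ differ by exactly one edge, and the shared facet is opposite the vertex corresponding to the removed edge. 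Thus the shelling condition reduces to the following combinatorial assertion: given $T_i<_{lex}T_j$, one can find $e\in E(T_j)\setminus E(T_i)$ and $e'\in E(\overline{T})\setminus E(T_j)$ with $e'<_{lex}e$ such that $T_{i'}:=(T_j\setminus\{e\})\cup\{e'\}$ is again a noncrossing alternating spanning tree of $\overline T$. Once this is done, the $T_{i'}$ in question automatically appears earlier in the lex order: inserting $e'$ into the sorted edge list of $T_j$ in place of $e$ produces a list whose first disagreement with $T_j$ occurs where $e'$ is inserted, and there $T_{i'}$ has the strictly smaller edge, so $T_{i'}<_{lex}T_j$.

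To produce the swap I would take $e$ to be the lex-smallest edge of $E(T_j)\setminus E(T_i)$; then the edges of $T_j$ preceding $e$ in lex order all lie in $E(T_i)$ as well, and the matroid exchange property forces some edge of $T_i$ to lie in the cocycle $[C_1,C_2]$ of the bipartition of $[n+1]$ induced by $T_j\setminus\{e\}$. Choosing $e'$ to be such a $T_i$-edge gives a spanning tree, and membership in $\overline T$ is automatic since $T_i\subseteq\overline T$. The remaining conditions to check are that the resulting tree is noncrossing and alternating and that $e'<_{lex}e$.

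The main obstacle is verifying these three extra conditions simultaneously, since matroid exchange is blind to the noncrossing and alternating structure. I expect to resolve this by taking $e'$ to be the unique edge on the $T_i$-path between the endpoints of $e$ that crosses $[C_1,C_2]$, and then using the structural description of $\overline T$ in terms of pseudo-components (Section~\ref{sec:gen}) together with the fact that in a noncrossing alternating tree every vertex is exclusively a ``source'' (all incident edges go to larger indices) or a ``sink'' (all go to smaller indices), as was exploited in the proof of Lemma~\ref{huh}. The lex inequality $e'<_{lex}e$ should then follow from the minimality of $e$ in $E(T_j)\setminus E(T_i)$ combined with the $<_{lex}$ inequality $e_l^i<_{lex}e_l^j$ at the first disagreement position $l$ of the sorted edge lists of $T_i$ and $T_j$. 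I would finally illustrate the resulting shelling on the example of Figure~\ref{fig:lex} as a sanity check and to justify the picture referenced in the theorem statement.
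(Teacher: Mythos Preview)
Your proposal identifies the right shelling criterion and reduces it correctly to an edge-swap statement, but the swap you specify does not work. You take $e$ to be the lex-smallest edge of $E(T_j)\setminus E(T_i)$ and $e'$ to be the unique edge on the $T_i$-path between the endpoints of $e$ crossing the cocycle of $T_j\setminus\{e\}$; you then defer the verification that $(T_j\setminus\{e\})\cup\{e'\}$ is noncrossing and alternating. That verification fails. Take $T=P$ on $[4]$, so $\overline T$ is the complete graph and the noncrossing alternating spanning trees in lex order are
\[
T_1=\{12,13,14\},\quad T_2=\{12,14,34\},\quad T_3=\{13,14,23\},\quad T_4=\{14,23,24\},\quad T_5=\{14,24,34\}.
\]
With $i=3$ and $j=5$ you get $E(T_5)\setminus E(T_3)=\{24,34\}$, so $e=(2,4)$; the cut of $T_5\setminus\{e\}$ is $\{2\}$ versus $\{1,3,4\}$, and the only $T_3$-edge crossing it is $e'=(2,3)$. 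But $(T_5\setminus\{e\})\cup\{e'\}=\{14,23,34\}$ is \emph{not} alternating (vertex $3$ has both an incoming edge $(2,3)$ and an outgoing edge $(3,4)$). Thus the matroid-exchange candidate is not one of the $T_l$'s, and your argument stalls. A correct choice here would have been $e=(3,4)$, $e'=(2,3)$, yielding $T_4$; the point is that the lex-minimal $e$ is not in general the right edge to remove.

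The paper avoids this pairwise search entirely. For each $T_m$ it defines a distinguished edge set $S_T(T_m)\subset E(T_m)$ (for every left vertex $i$ of $T_m$, the edge from $i$ to its largest neighbor, discarding those whose canonical replacement edge lies outside $\overline T$), and proves two things: (a) for each $(i,j)\in S_T(T_m)$ the forest $T_m\setminus\{(i,j)\}$ extends to exactly one other noncrossing alternating spanning tree of $\overline T$, obtained by adding a specific edge $f_{T_m}(i,j)$, and that tree is lex-smaller than $T_m$; (b) $S_T(T_m)$ already determines $T_m$ among all noncrossing alternating spanning trees of $\overline T$, so any face of $\mathcal{P}(T_m)$ containing all vertices indexed by $S_T(T_m)$ lies in no earlier $\mathcal{P}(T_l)$. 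Together these give
\[
\mathcal{P}(T_m)\cap\bigl(\mathcal{P}(T_1)\cup\cdots\cup\mathcal{P}(T_{m-1})\bigr)=\bigcup_{(i,j)\in S_T(T_m)}\mathcal{P}\bigl(T_m\setminus\{(i,j)\}\bigr),
\]
which is the shelling condition, and as a bonus identifies the restriction set $S_T(T_m)$ used in Theorem~\ref{J} to compute the $h$-polynomial. If you want to salvage your pairwise approach, you would need to replace the lex-minimal $e$ by some $e\in E(T_j)\setminus E(T_i)$ that lies in $S_T(T_j)$ and then take $e'=f_{T_j}(e)$; but establishing that such an $e$ exists is essentially the content of part~(b) above.
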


 \proof It suffices to show that for all $2 \leq m \leq k$, the intersection $ \mathcal{P}(T_m)\cap (\mathcal{P}(T_1)\cup \ldots \cup  \mathcal{P}(T_{m-1}))$  is a  union of nonzero facets of $\mathcal{P}(T_m)$.

Let  $L(T_m)$ denote the set of left vertices of $T_m$, that is, the vertices of $T_m$ which are the smaller vertex of each edge incident to them. Let  $$S(T_m)=\{ (i, j) \mid  i \in L(T_m) \mbox{ and $j$ is the largest vertex adjacent to $i$ in $T_m$} \}.$$ The set $S(T_m)$ uniquely determines $T_m$, since $T_m$ is a noncrossing alternating spanning tree. 

There are exactly two  noncrossing alternating trees containing $F=([n+1], E(T_m)\backslash \{(i, j)\})$, $(i, j) \in S(T_m)\backslash \{(1, n+1)\}$, namely, $T_m$ and $\tilde{T}_m=([n+1], E(F)\cup \{(i', j')\}),$ where $i'$ is the biggest vertex of $T_m$ smaller than $i$ such that  $(i', j) \in E(T_m),$ and $j'$ is the biggest vertex of $T_m$ smaller than $j$ such that $(i, j') \in E(T_m)$, or if $(i, j)$ is the only edge incident to $i$, then $j'=i$. Let $f_{T_m}: S(T_m)\backslash \{(1, n+1)\} \rightarrow E(K_{n+1})$ be defined by  $f_{T_m}: (i, j) \mapsto (i', j')$ according to the rule explained above. 
Define $$M_T(T_m)=\{(i, j) \in S(T_m) \mid f_{T_m}((i, j)) \not \in \overline{T}\}.$$

The set $S_T(T_m)=S(T_m) \backslash M_T(T_m)$  uniquely determines $T_m$, since $T_m$ is a noncrossing alternating spanning tree of $\overline{T}$.  Furthermore, if for some $m' \in [k]$, $m' \neq m$,  $S_T(T_m)\subset E(T_{m'})$, then $\mathcal{P}(T_m)<_{lex} \mathcal{P}(T_{m'})$.  
Thus, if for a forest $F$ on the vertex set $[n+1]$,  $S_T(T_m)\subset E(F) \subset E(T_m)$, then $\mathcal{P}(F)$ is not a face of $\mathcal{P}(T_1)\cup \ldots \cup  \mathcal{P}(T_{m-1})$. 
If   $F\subset T_m$ does not contain $S_T(T_m)$ and  $|E(F)|=n-1$, then $F \subset T_l= ([n+1], E(T_m)\backslash \{(i, j)\} \cup \{f_{T_m}((i, j))\})$ for $l<m$. Thus,      for all $2 \leq m \leq k$, $$ \mathcal{P}(T_m)\cap (\mathcal{P}(T_1)\cup \ldots \cup  \mathcal{P}(T_{m-1}))= \bigcup_{(i, j)\in S_T(T_m)} \mathcal{P}(([n+1], E(T_m)\backslash \{(i, j)\})).$$

See Figure \ref{fig:lex} for an example. 
 \qed

    \begin{figure}[htbp] 
\begin{center} 
\includegraphics[width=.95\textwidth]{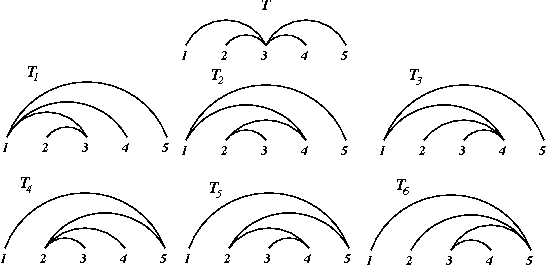} 
\caption{Trees $T_1, \ldots, T_6$ are the noncrossing alternating spanning trees of $\overline{T}$.
The root polytopes associated to them satisfy  $ \mathcal{P}(T_1)<_{lex}\cdots<_{lex} \mathcal{P}(T_6)$. 
\newline $S_T(T_1)=\emptyset, S_T(T_2)=\{(2,4)\}, S_T(T_3)=\{3,4\}, \newline S_T(T_4)=\{(2,5)\}, S_T(T_5)=\{(2,5), (3,4)\}, S_T(T_6)=\{(3,5)\}$.
\newline By Theorem \ref{J},  $J( \mathcal{P}(T), x)=\displaystyle\frac{x^2+4x+1}{(1-x)^{5}}. $ This is of course equivalent to $L_{\mathcal{P}(T)}(t)={t+2 \choose 2}-6 {t+3 \choose 3} + 6{t+4 \choose 4}$ as calculated in Figure \ref{fig:pseudo-comp}. For a way to see this equivalence directly, see \cite[Lemma 3.14]{br}.} 
\label{fig:lex}
\end{center} 
\end{figure}

 \begin{theorem} \label{J} Let $T$ be a good tree on the vertex set $[n+1]$. Let $c(n, l)$ be the number of noncrossing alternating spanning trees $T_m$ of $\overline{T}$ with $|S_T(T_m)|=l$.   Then, $$(1-x)^{n+1} J( \mathcal{P}(T), x)=\sum_{l=1}^n c(n, l-1) x ^{l-1}.$$
 
 \end{theorem}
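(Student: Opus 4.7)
The plan is to derive Theorem \ref{J} from Theorem \ref{shellable} via the standard correspondence between the $h$-polynomial of a unimodular shellable triangulation and the Ehrhart $h^*$-polynomial of the triangulated polytope.

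By Lemma \ref{simplex} every maximal simplex $\mathcal{P}(T_m)$ of the canonical triangulation $\Delta$ is a unimodular $n$-simplex, so $\Delta$ is a unimodular triangulation of the $n$-dimensional lattice polytope $\mathcal{P}(T)$. Partitioning $t\mathcal{P}(T)$ into relatively open simplices $(t\sigma)^{\circ}$ and summing the Ehrhart series $x^{j+1}/(1-x)^{j+1}$ of the interior of a unimodular $j$-simplex (in the spirit of the proof of Lemma \ref{const}) gives the classical identity
\begin{equation*}
J(\mathcal{P}(T), x) = \frac{h(\Delta; x)}{(1-x)^{n+1}},
\end{equation*}
where $h(\Delta; x) = \sum_{l \ge 0} h_l\,x^l$ is the $h$-polynomial of $\Delta$ viewed as a simplicial complex.

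The key step is to read off $h(\Delta; x)$ from the shelling provided by Theorem \ref{shellable}. For a shellable simplicial complex one has $h_l = \#\{m : |R(\mathcal{P}(T_m))| = l\}$, where the restriction face $R(\mathcal{P}(T_m))$ consists of those vertices $v \in \mathcal{P}(T_m)$ for which $\mathcal{P}(T_m) \setminus \{v\} \subseteq \mathcal{P}(T_1) \cup \cdots \cup \mathcal{P}(T_{m-1})$. Theorem \ref{shellable} exhibits this intersection as $\bigcup_{(i,j) \in S_T(T_m)}\mathcal{P}([n+1], E(T_m) \setminus \{(i, j)\})$, so the facets of $\mathcal{P}(T_m)$ already shared with earlier simplices are exactly those obtained by deleting the vertex $e_i - e_j$ for $(i, j) \in S_T(T_m)$. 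Consequently $R(\mathcal{P}(T_m)) = \{e_i - e_j : (i, j) \in S_T(T_m)\}$ and $|R(\mathcal{P}(T_m))| = |S_T(T_m)|$, so $h_l = c(n, l)$ for every $l$. Combining the two identities yields $(1-x)^{n+1} J(\mathcal{P}(T), x) = \sum_{l \ge 0} c(n, l)\,x^l$, and the sum has degree at most $n-1$ because the edge $(1, n+1)$ lies in $S(T_m)$ but is always excluded from $S_T(T_m)$: every noncrossing alternating spanning tree of $\overline{T}$ contains $(1, n+1)$ by Proposition \ref{gen:non}, so no other $\mathcal{P}(T_j)$ can share the facet $\mathcal{P}(T_m \setminus \{(1, n+1)\})$. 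Re-indexing $l \mapsto l - 1$ produces the stated right-hand side $\sum_{l=1}^{n} c(n, l-1)\,x^{l-1}$.

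The main obstacle is the identification of the restriction face $R(\mathcal{P}(T_m))$ with $\{e_i - e_j : (i, j) \in S_T(T_m)\}$: one must confirm that the polytopal shelling of Theorem \ref{shellable} also functions as a genuine simplicial-complex shelling of $\Delta$ and that the intersection description there picks out precisely the combinatorial notion of restriction. Once this bookkeeping is in place, the theorem is a direct application of the standard correspondence $h^*(P; x) = h(\Delta; x)$ for unimodular triangulations.
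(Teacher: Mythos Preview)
Your argument is correct and follows the same route as the paper: both use the shelling of Theorem~\ref{shellable} to compute $J(\mathcal{P}(T),x)$, with the paper carrying out the inclusion--exclusion explicitly---showing that a facet $\mathcal{P}(T_m)$ attached along $l-1$ facets contributes exactly $x^{l-1}/(1-x)^{n+1}$---while you invoke the equivalent $h$-polynomial/restriction-face formalism for shellable unimodular triangulations. The identification $R(\mathcal{P}(T_m))=\{e_i-e_j:(i,j)\in S_T(T_m)\}$ that you flag as the main bookkeeping step is precisely what the displayed equality at the end of the proof of Theorem~\ref{shellable} records, so nothing further is needed there.
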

 
 \proof It can be seen that for a forest $F$ with $r$ edges, $ J( \mathcal{P}(F), x)=\frac{1}{(1-x)^{r+1}},$  \cite[Theorem 2.2]{br}. If we are adding the simplices $\mathcal{P}(T_1), \ldots,  \mathcal{P}(T_{k})$ in lexicographic order one at a time, and calculating their contribution to $J( \mathcal{P}(T), x)$, then the contribution of  $\mathcal{P}(T_m)$ such that  $ \mathcal{P}(T_m)\cap (\mathcal{P}(T_1)\cup \ldots \cup  \mathcal{P}(T_{m-1}))$ is a  union of $(l-1)$  facets of $\mathcal{P}(T_m)$ is $$\frac{1}{(1-x)^{n+1}}-(l-1)\frac{1}{(1-x)^{n}}+ \cdots + (-1)^{l-1} {l-1 \choose l-1} \frac{1}{(1-x)^{n+1-(l-1)}}=\frac{x^{l-1}}{(1-x)^{n+1}}.$$ Hence, $$J( \mathcal{P}(T), x)=\frac{\sum_{l=1}^n c(n, l-1) x ^{l-1}}{(1-x)^{n+1} }.$$

 \qed
 
 \begin{remark}
All the theorems proved for trees (monomials corresponding to trees) in this paper  can be formulated for forests (monomials corresponding to forests), and the proofs proceed analogously. The acyclic condition for graphs in the theorems  is crucial for the proof techniques to work, but the noncrossing condition is not. Given an acyclic graph $G$ which is crossing, we can {\bf uncross} it to obtain a new graph $G^u$.  The graph $G^u$ is a noncrossing graph such that there is a graph  isomorphism $\phi: G \rightarrow G^u$, where if $(i, j) \in E(G)$, $i<j$, then $\phi(i)<\phi(j)$. The graph $G^u$ is not uniquely determined by these conditions.  
All the results apply to any  $G^u$, and they can be translated back for $G$ in an obvious way.  E.g.   the volume of $\mathcal{P}(T)$ for any tree $T$ on the vertex set $[n+1]$ is $\mbox{\rm  vol}\, \mathcal{P}(T)=f_{T^u} \frac{1}{n!},$ where  $f_{T^u} $ denotes the number of noncrossing alternating spanning trees of  $\overline{T^u}$, the transitive closure of the uncrossed $T$.
 \end{remark}

 \section{Unique reduced forms and Gr\"obner bases}
\label{sec:grobi}

The reduced form of a monomial $m \in \b$ was defined in the Introduction as  a polynomial $P_n^\mathcal{B}$ obtained by successive applications of the reduction rule  (\ref{red}) until no further reduction is possible, where we allow commuting   any two variables $x_{ij}$ and $x_{kl}$ where $i, j, k, l$ are distinct,   between the reductions.  An alternative way of thinking of  the reduced form of a monomial $m \in \b $ is to view the reduction process in $\q\langle \beta, x_{ij} \mid 1\leq i<j\leq n \rangle / I_{\beta},$ where the generators of the (two-sided) ideal $I_{\beta}$ in $\q\langle \beta, x_{ij} \mid 1\leq i<j\leq n+1 \rangle$  are the elements $x_{ij}x_{kl}-x_{kl}x_{ij}$ for $i<j, k<l$ distinct, and $\beta x_{ij}-x_{ij} \beta$, $i<j$.  In this section we prove the following theorem.

  \begin{theorem} \label{uu}
The reduced form of any monomial $m \in \b$ is unique. 
\end{theorem}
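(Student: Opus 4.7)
The plan is to apply Bergman's Diamond Lemma for associative algebras to the presentation of $\b$ as the quotient of $\mathbb{Q}\langle \beta, x_{ij}\rangle$ by the two-sided ideal generated by the Yang--Baxter reductions $x_{ij}x_{jk} - x_{ik}x_{ij} - x_{jk}x_{ik} - \beta x_{ik}$ (for $i<j<k$), the disjoint commutations $x_{ij}x_{kl} - x_{kl}x_{ij}$ (for $\{i,j\} \cap \{k,l\} = \emptyset$), and the centrality relations $\beta x_{ij} - x_{ij}\beta$. The goal is to show these generators form a noncommutative Gr\"obner basis, which by the Diamond Lemma is equivalent to showing that every overlap ambiguity resolves.

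First I would orient the relations as a terminating rewriting system by fixing a monomial order. Weight each generator by $w(x_{ij}) = -(j-i)^2$ (and $w(\beta) = 0$), extend $w$ additively to monomials, and order the free monoid lexicographically by (total weight, total length, a fixed lex order on generators). Under this order the leading term of each Yang--Baxter relation is $x_{ij}x_{jk}$: the key inequality $(k-i)^2 > (j-i)^2 + (k-j)^2$, which is just $2(j-i)(k-j) > 0$, dominates all three of $x_{ik}x_{ij}$, $x_{jk}x_{ik}$, and $\beta x_{ik}$. Commutation and centrality relations have equal weight and length on both sides, so they are oriented by the lex tiebreaker, say $x_{ij}x_{kl} \to x_{kl}x_{ij}$ whenever $(i,j) > (k,l)$ in the chosen generator order.

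Next I would enumerate the overlap ambiguities and resolve the easy ones. Commutation/commutation overlaps on three pairwise-disjoint edges $x_{ab}x_{cd}x_{ef}$, and any overlap involving the central variable $\beta$, resolve trivially. Commutation/Yang--Baxter overlaps of the form $x_{ab}x_{ij}x_{jk}$ and $x_{ij}x_{jk}x_{ab}$ split into a handful of subcases according to how $\{a,b\}$ meets $\{i,j,k\}$, and each is dispatched by a short chase because in every subcase $x_{ab}$ commutes past all the new variables produced by the Yang--Baxter step.

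The main obstacle is the single Yang--Baxter/Yang--Baxter triple overlap on $x_{ij}x_{jk}x_{kl}$ with $i<j<k<l$. Reducing the left pair first and the right pair first each trigger a cascade of three further Yang--Baxter reductions interlaced with the commutations $x_{ij} \leftrightarrow x_{kl}$ and $x_{ik} \leftrightarrow x_{jl}$ (allowed because the involved index sets are disjoint), and I would verify that the two cascades yield the same polynomial modulo commutations at every power of $\beta$. This identity is guaranteed abstractly by iterating the Reduction Lemma (Lemma \ref{reduction_lemma}): both reduction orders produce the same final open-cell decomposition of the three-dimensional cone $\mathcal{C}(G^\mathcal{B}[x_{ij}x_{jk}x_{kl}])$, with each open cell of dimension $d$ contributing $\beta^{3-d}$ times the monomial recording the edges of its defining graph, so since the polyhedral subdivision is independent of the order of cuts, the two resulting polynomials must coincide. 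Once the triple overlap is verified, Bergman's Diamond Lemma delivers unique normal forms in $\mathbb{Q}\langle \beta, x_{ij}\rangle$ modulo the ideal, and these normal forms are precisely the reduced forms defined in Section \ref{sec:in}, so the theorem follows.
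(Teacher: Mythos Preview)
Your overall strategy---proving that the defining relations form a noncommutative Gr\"obner basis and invoking the Diamond Lemma---is exactly the paper's approach in Section~\ref{sec:grobi}. The paper quotients by the commutation ideal $I_\beta$ first and uses a degree-then-lex order rather than your weight order, but these differences are cosmetic.

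The genuine gap is in your resolution of the Yang--Baxter/Yang--Baxter overlap on $x_{ij}x_{jk}x_{kl}$. You assert that iterating the Reduction Lemma shows both reduction orders produce the \emph{same} open-cell decomposition of the cone, and hence the same polynomial. This is false. The Reduction Lemma, together with Lemma~\ref{const}, only guarantees that any two such decompositions have the same \emph{number} of cells in each dimension---it says nothing about the cells themselves coinciding. Indeed, the paper's own computations (\ref{ex1}) and (\ref{ex2}) exhibit two distinct reduced forms of $x_{12}x_{23}x_{34}$ in the commutative algebra $\t$: as unordered edge-sets, (\ref{ex1}) contains the trees $\{13,14,24\}$ and $\{13,23,24\}$ whereas (\ref{ex2}) contains $\{13,14,23\}$ and $\{14,23,24\}$. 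Your geometric argument never invokes the noncommutativity of $\b$, so if it were valid it would equally establish uniqueness of reduced forms in $\t$---which the paper explicitly notes fails. The triple overlap must therefore be checked by a direct algebraic computation, as the paper does; what makes that computation succeed in $\b$ but not in $\t$ is precisely that the restricted commutativity blocks the reduction paths that lead to divergent triangulations.
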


We use noncommutative Gr\"obner bases techniques, which we now briefly review.   
 We use the terminology and notation of \cite{g}, but state the results only for our special algebra. For the more general statements, see \cite{g}. Throughout this section  we consider the noncommutative case only. 
 
Let  $${ \bf R=\q\langle \beta, x_{ij} \mid 1\leq i<j\leq n+1 \rangle / I_{\beta}}$$ with multiplicative basis $\base$, the set of noncommutative monomials in variables $\beta$ and $ x_{ij}$,  where $1\leq i< j\leq n$,  up to equivalence under the commutativity relations described by $I_\beta$. 

The {\bf tip} of an element $f \in R$ is the largest basis element appearing in its expansion, denoted by Tip$(f)$. Let CTip$(f)$ denote the coefficient of Tip$(f)$ in this expansion. A set of elements $X$ is {\bf tip reduced} if for distinct elements $x, y \in X$, Tip$(x)$ does not divide Tip$(y)$.

\medskip

A well-order $>$ on $\base$ is {\bf admissible} if for $p, q, r, s \in \base$:

1. if $p<q$ then $pr<qr$ if both $pr\neq 0$ and $qr \neq 0$;

2. if $p<q$ then $sp<sq$ if both $sp\neq 0$ and $sq \neq 0$;

3. if $p=qr$, then $p>q$ and $p>r$.

\medskip

Let $f, g \in R$  and suppose that there are monomials $b, c \in \base$ such that 

\medskip

1. Tip$(f)c$=$b$Tip$(g)$.

\medskip

2. Tip$(f)$ does not divide $b$ and Tip$(g)$ does not divide $c.$

\medskip
Then the {\bf overlap relation of $f$ and $g$ by $b$ and $c$} is 

$$o(f, g, b, c)=\frac{fc}{\mbox{CTip}(f)}-\frac{bg}{\mbox{CTip}(g)}.$$


\medskip 
 \begin{proposition} \label{suf} (\cite[Theorem 2.3]{g}) A tip reduced  generating set of elements $\gr$ of the ideal $J$ of $R$ is a Gr\"obner basis, where the ordering on the monomials is admissible, if for every overlap relation $$o(g_1, g_2, p, q) \Rightarrow_{\gr} 0,$$

\noindent where $g_1, g_2 \in \gr$ and the above notation means that dividing $o(g_1, g_2, p, q)$ by $\gr$ yields a remainder of $0$. 
\end{proposition}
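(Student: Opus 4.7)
The plan is to prove this Buchberger-type sufficient condition by the standard argument adapted to the noncommutative algebra $R$. Recall that $\gr$ is a Gröbner basis for $J$ iff every nonzero $f \in J$ has $\mathrm{Tip}(f)$ of the form $p \cdot \mathrm{Tip}(g) \cdot q$ for some $g \in \gr$ and $p,q \in \base$, equivalently iff every $f \in J$ reduces to $0$ modulo $\gr$. Since the admissible order $>$ is a well-order, I would argue by noetherian induction on $\mathrm{Tip}(f)$: assuming the claim holds for all elements of $J$ with smaller tip, it suffices to exhibit a rewriting step carrying $f$ to some $f' \in J$ with $\mathrm{Tip}(f') < \mathrm{Tip}(f)$, which by induction reduces to $0$.

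To produce such a step, fix any representation $f = \sum_{i=1}^{N} c_i\, p_i\, g_{k_i}\, q_i$ with $c_i \in \q[\beta] \setminus \{0\}$, $p_i, q_i \in \base$, and $g_{k_i} \in \gr$, and let $h$ be the $>$-maximum of the monomials $p_i \cdot \mathrm{Tip}(g_{k_i}) \cdot q_i$; by admissibility of $>$, this monomial equals the tip of the corresponding summand. Among all representations of $f$, I select one that first minimizes $h$ and then minimizes the number $s$ of indices attaining $h$. If no cancellation occurs at level $h$ then $\mathrm{Tip}(f) = h$ is reducible by the corresponding $g_{k_i}$ and we are done. Otherwise pick distinct $i, j$ with $p_i \mathrm{Tip}(g_{k_i}) q_i = p_j \mathrm{Tip}(g_{k_j}) q_j = h$ and perform a case analysis on the relative position of the two occurrences of $\mathrm{Tip}(g_{k_i})$ and $\mathrm{Tip}(g_{k_j})$ inside $h$: disjoint, one contained in the other, or properly overlapping. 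Tip-reducedness of $\gr$ excludes the ``contained'' case unless $k_i = k_j$, in which case the two summands collapse and strictly decrease $s$. The ``disjoint'' case is handled by rewriting at one occurrence alone, producing $p_i \mathrm{Tip}(g_{k_i}) q_i$ minus strictly lower-height terms, and so also decreases $s$. In the proper-overlap case the equality forces monomials $b, c \in \base$ with $\mathrm{Tip}(g_{k_i})\, c = b\, \mathrm{Tip}(g_{k_j})$ and neither tip dividing the extra factor, so $o(g_{k_i}, g_{k_j}, b, c)$ is defined; by hypothesis it reduces to $0$ modulo $\gr$, yielding an identity that expresses $c_i p_i g_{k_i} q_i + c_j p_j g_{k_j} q_j$ (after pulling out the common outer context from $p_i, q_i, p_j, q_j$) as a $\q[\beta]$-combination of terms $p' g_{k'} q'$ with $p' \cdot \mathrm{Tip}(g_{k'}) \cdot q' < h$. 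Substituting back contradicts the minimal choice of $(h, s)$.

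The main obstacle is the combinatorial case analysis of the overlap patterns, made slightly more delicate here by the fact that $\base$ consists of monomials modulo the partial commutations encoded in $I_\beta$: a single class in $\base$ admits many factorizations, so the positions of $\mathrm{Tip}(g_{k_i})$ and $\mathrm{Tip}(g_{k_j})$ inside $h$ are only defined up to legal commutations. One must verify that admissibility makes these commutations preserve tips and relative $>$-order, so that ``disjoint'', ``contained'' and ``properly overlapping'' are well-defined on the equivalence class and the overlap relation $o(g_{k_i}, g_{k_j}, b, c)$ represents a genuine element of $R$. Once this bookkeeping is settled, the rewriting identity furnished by the hypothesis strictly improves $(h, s)$ in the chosen well-founded order on representations, closing the induction and showing that $\gr$ is a Gröbner basis.
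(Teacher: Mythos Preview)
The paper does not prove this proposition at all: it is quoted verbatim as \cite[Theorem 2.3]{g} and used as a black box, with the remark ``See \cite[Theorem 2.3]{g} for the more general formulation.'' So there is no ``paper's own proof'' to compare against.

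Your outline is the standard Buchberger--Bergman argument and is correct in its architecture: minimize over representations the pair (maximal height $h$, number $s$ of summands attaining $h$), and in the cancellation case split into disjoint / contained / overlapping occurrences of the two tips inside $h$. The one point that deserves a bit more care is precisely the one you flag: here $\base$ is not the free monoid but monomials modulo the commutations in $I_\beta$, so ``position of a subword inside $h$'' is not literally meaningful. In Green's framework this is handled by working in an algebra with a multiplicative basis and phrasing everything in terms of divisibility in $\base$ rather than positions in a word; your sketch would need to be rewritten in that language (or, equivalently, lifted to the free algebra with the commutation relations adjoined to $\gr$) to go through cleanly. Modulo that rephrasing, your argument is exactly the content of the cited theorem.
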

See \cite[Theorem 2.3]{g} for the more general formulation of Proposition \ref{suf} and  \cite[Section 2.3.2]{g} for the formulation of the Division Algorithm.

\begin{proposition} \label{grobi} Let $J$ be the ideal generated by the elements 
   $$x_{ij}x_{jk}-x_{ik}x_{ij}-x_{jk}x_{ik}-\beta x_{ik},\mbox{   for  }1\leq i<j<k\leq n+1,$$

in $R$. Then there is a monomial order in which the above generators of $J$ form a Gr\"obner basis $\gr$ of $J$ in $R$, and the tips of the generators are, $x_{ij}x_{jk}$.  
  
\end{proposition}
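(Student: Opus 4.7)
The plan is to verify the hypotheses of Proposition~\ref{suf}. I would (i) exhibit an admissible monomial order on $\mathcal{B}$ that makes $x_{ij}x_{jk}$ the tip of each generator $g_{ijk} := x_{ij}x_{jk} - x_{ik}x_{ij} - x_{jk}x_{ik} - \beta x_{ik}$, (ii) check that $\mathcal{G}$ is tip reduced, (iii) enumerate the overlap relations of $\mathcal{G}$, and (iv) show that each overlap relation reduces to zero modulo $\mathcal{G}$.

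For (i), set $w(x_{ij}) = (n+2) - (j-i)$ and $w(\beta) = 0$, and extend additively. This weight is well defined on $\mathcal{B}$ because the defining relations of $I_\beta$ preserve the multiset of letters. A direct calculation gives $w(x_{ij}x_{jk}) = 2(n+2) - (k-i)$, which strictly exceeds each of $w(x_{ik}x_{ij}) = 2(n+2) - (k-i) - (j-i)$, $w(x_{jk}x_{ik}) = 2(n+2) - (k-i) - (k-j)$, and $w(x_{ik}) = (n+2) - (k-i)$. Breaking ties by any admissible refinement --- for example, lexicographic comparison of canonical representatives of each equivalence class --- produces an admissible order, since additivity of $w$ makes the primary weight comparison respect both left and right multiplication. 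Step (ii) is then immediate: the tips are pairwise distinct words of length two, so none can divide another.

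For (iii), I would classify pairs $(b,c)\in\mathcal{B}^2$ with $x_{ij}x_{jk}\cdot c = b\cdot x_{i'j'}x_{j'k'}$ in $\mathcal{B}$ and neither tip dividing $b$ or $c$. A length count forces $|b|=|c|$, and the non-divisibility condition forces $|b|,|c|\leq 1$. The case $|b|=|c|=1$ splits into two subcases. In the genuine-overlap subcase, the rightmost letter $x_{jk}$ of the left tip must coincide with the leftmost letter $x_{i'j'}$ of the right tip, which forces $i'=j$, $j'=k$, so that $b=x_{ij}$, $c=x_{kl}$ for some $l>k$, producing the overlap word $x_{ij}x_{jk}x_{kl}$ for each quadruple $i<j<k<l$ and the relation
\[
o_{ijkl} := g_{ijk}\, x_{kl} - x_{ij}\, g_{jkl}.
\]
The remaining ``commutation'' subcase has $b=c=x_{pq}$ with $\{p,q\}$ disjoint from $\{i,j,k\}$, and the corresponding overlap relation $g_{ijk}x_{pq} - x_{pq}g_{ijk}$ is already $0$ in $R$ because $x_{pq}$ commutes with every variable appearing in $g_{ijk}$. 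Disjoint-variable commutation cannot yield any further overlaps, since it cannot move the common vertex $j$ out of the junction of the two tips.

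The main obstacle is step (iv): showing $o_{ijkl}\Rightarrow_{\mathcal{G}} 0$. This amounts to the confluence assertion that the two expansions of $x_{ij}x_{jk}x_{kl}$ --- one starting by reducing the left tip $x_{ij}x_{jk}$, the other by reducing the right tip $x_{jk}x_{kl}$ --- yield the same polynomial in $R$ once both have been run to alternating normal form. Rather than grinding through the calculation, I would invoke Theorem~\ref{ajaj} (in the strengthened form of Theorem~\ref{main}), applied after relabeling to the four-vertex path on $\{i,j,k,l\}$ and its transitive closure, the complete graph on $\{i,j,k,l\}$: these results are derived from the geometric Reduction Lemma, independently of Gr\"obner theory, and state that any $\mathcal{B}$-reduction tree rooted at $x_{ij}x_{jk}x_{kl}$ has leaves indexed, up to reduction~(2), by the noncrossing alternating spanning forests on $\{i,j,k,l\}$ containing edge $(i,l)$ with lexicographic edge-labels. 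Since reduction~(2) is an identity in $R$, both expansions of $x_{ij}x_{jk}x_{kl}$ collapse to the same element of $R$, so $o_{ijkl}\Rightarrow_{\mathcal{G}} 0$. Proposition~\ref{suf} then concludes that $\mathcal{G}$ is a Gr\"obner basis of $J$ with tips $x_{ij}x_{jk}$, as claimed.
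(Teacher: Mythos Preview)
Your proof is correct and follows the same overall scaffold as the paper's---verify the hypotheses of Proposition~\ref{suf}---but differs in two substantive ways that are worth noting.

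First, you use a weight-based order ($w(x_{ij}) = (n+2)-(j-i)$) to identify the tips, whereas the paper uses a degree-then-lex order with $x_{ij} > x_{kl}$ iff $(i,j) <_{\mathrm{lex}} (k,l)$. Both work; yours has the pleasant feature that the tip is isolated already by weight, with no tie-breaking needed at that stage.

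Second, and more interestingly, for the confluence step the paper simply asserts ``a calculation of the overlap relations shows that $o(g_1,g_2,p,q) \Rightarrow_{\mathcal G} 0$,'' i.e.\ it grinds out the reduction of $o_{ijkl}$ by hand. You instead invoke Theorem~\ref{main} (equivalently Theorem~\ref{ajaj}) on the four-vertex path to conclude that the two one-step reductions of $x_{ij}x_{jk}x_{kl}$ have the same full $\mathcal B$-reduction in $R$, hence their difference reduces to $0$. This is a genuinely different and more conceptual route: it recycles the geometric machinery (Reduction Lemma $\to$ Theorem~\ref{thm_forest} $\to$ Theorem~\ref{main}) to avoid the explicit computation, and it is non-circular since those results nowhere use Gr\"obner theory. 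The paper's direct computation, by contrast, is self-contained and does not rely on the earlier sections.

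One small imprecision: your claim that non-divisibility forces $|b|,|c|\le 1$ holds in the free algebra but not literally in $R$, where commutation with a disjoint $x_{pq}$ can produce overlaps like $b = x_{pq}x_{ij}$, $c = x_{kl}x_{pq}$. These extra overlaps, however, are just $x_{pq}$-multiples of the basic $o_{ijkl}$ (or are identically zero), so they reduce to $0$ once the basic ones do; the gap is cosmetic.
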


\proof Let   $x_{ij}>x_{kl}$   if $(i, j)$ is less than $(k, l)$ lexicographically. The degree of a monomial is determined by setting the degrees of $x_{ij} $ to be $1$ and the degrees of $\beta$ and scalars to be $0$. A monomial with higher degree is bigger in the order $>$, and the lexicographically bigger monomial of the same degree is greater than the lexicographically smaller one. Since in $R$ two equal monomials can be written in two different ways due to commutations, we can pick a representative to work with, say the one which is the ``largest" lexicographically among all possible ways of writing the monomial, to resolve any ambiguities. The order $>$ just defined is admissible, and  in it the tip of  
  $x_{ij}x_{jk}-x_{ik}x_{ij}-x_{jk}x_{ik}-\beta x_{ik}$,  for  $1\leq i<j<k\leq n+1$, is
  $x_{ij}x_{jk}$.  
In particular, the generators of $J$ are tip reduced. A calculation of the overlap relations shows that  $o(g_1, g_2, p, q) \Rightarrow_{\gr} 0$ in $R$,   where $g_1, g_2 \in \gr$. 
Proposition \ref{suf} then implies Proposition \ref{grobi}.
\qed

\begin{corollary} \label{/Y}
The reduced form of a noncommutative monomial $m $  in variables $\beta$ and $x_{ij}$, $1\leq i<j\leq n+1$, is unique in $R$. 
\end{corollary}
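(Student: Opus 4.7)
The plan is to interpret the reduction process on a monomial $m$ as an execution of the noncommutative Division Algorithm by the Gr\"obner basis $\gr$ established in Proposition \ref{grobi}, and then invoke the standard uniqueness of the remainder modulo a Gr\"obner basis.

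First I would observe that each application of reduction rule (\ref{red}) to a summand of the form $\alpha \cdot p \cdot x_{ij}x_{jk} \cdot q$ is precisely the subtraction of $\alpha \cdot p \cdot g_{ijk} \cdot q$, where $g_{ijk} = x_{ij}x_{jk} - x_{ik}x_{ij} - x_{jk}x_{ik} - \beta x_{ik} \in \gr$. Since $g_{ijk} \in J$ and its tip $x_{ij}x_{jk}$ coincides with the factor being rewritten, this is exactly the defining step of the Division Algorithm in $R$. The commutations that the definition permits between reductions only change the chosen representative of an element of $R$ (they lie in $I_{\beta}$), so every sequence of reductions and commutations that terminates in a reduced form of $m$ is a valid execution of the Division Algorithm of $m$ by $\gr$.

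Next I would match the stopping condition of the reduction process with the definition of a normal form. A polynomial $P \in R$ admits no further reduction if and only if no monomial appearing in $P$ can be written with $x_{ij}x_{jk}$ as a factor for any $i<j<k$; equivalently, the tip of no term of $P$ is divisible by the tip of any $g_{ijk} \in \gr$. By definition this says that $P$ is a remainder with respect to division by $\gr$.

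Finally I would apply the fundamental theorem of noncommutative Gr\"obner bases (\cite[Section 2.3.2]{g}): since $\gr$ is a Gr\"obner basis for $J$ with respect to the admissible order fixed in Proposition \ref{grobi}, the remainder produced by the Division Algorithm depends only on the input, not on the particular choices of reductions. Applied to the monomial $m$, this is exactly Corollary \ref{/Y}. The only real obstacle is the bookkeeping in the first step: one must be confident that interleaving arbitrary commutations from $I_{\beta}$ with reduction steps does not introduce new behavior outside the Division Algorithm. This is the reason for working in the quotient $R = \q\langle \beta, x_{ij} \rangle / I_{\beta}$ from the start, where commutations are identities; had we worked in the free algebra, this equivalence would itself require justification.
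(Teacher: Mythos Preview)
Your proof is correct and follows essentially the same approach as the paper: identify each reduction step as a step of the Division Algorithm by the generators $g_{ijk}$ whose tips are $x_{ij}x_{jk}$, observe that the stopping condition coincides with being a remainder, and conclude uniqueness from the fact (Proposition~\ref{grobi}) that $\gr$ is a Gr\"obner basis. Your version is slightly more explicit about why commutations cause no trouble in $R$ and why the termination condition matches the normal-form condition, but the argument is the same.
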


\proof Since the tips of elements of the Gr\"obner basis $\gr$ of  $J$ are exactly the monomials which we replace in the prescribed reduction rule   (\ref{red}), the reduced form of a monomial $m$ is the remainder $r$ upon division by the elements of $\gr$ with the order $>$ described in the proof of Proposition \ref{grobi}. Since we proved that in $R$ the basis $\gr$ is a Gr\"obner basis of $J$, it follows 
by \cite[Proposition 2.7]{g} that the remainder $r$ of the division of $m$ by $\gr$ is unique in $R$.  That is, the reduced form of a good monomial $m$  is unique in $R$. 
\qed

Note that Corollary \ref{/Y} is equivalent to Theorem \ref{uu}.

  \section*{Acknowledgement}
 I am grateful to my advisor Richard Stanley for suggesting  this problem  and for many helpful suggestions. I would  like to thank Alex Postnikov   for sharing his insight into root polytopes and for his encouragement. I would also like to thank    Anatol Kirillov for drawing my attention to the noncommutative side  of the problem.

\end{document}